\title{Rank $2r$ iterative least squares: efficient recovery of ill-conditioned low rank matrices from few entries}
\author{Jonathan Bauch\footnotemark[1]\thanks{Faculty of Mathematics and Computer Science, Weizmann Institute of Science \newline (jonathan.bauch@weizmann.ac.il, boaz.nadler@weizmann.ac.il, pizilber@gmail.com)}
\and Boaz Nadler\footnotemark[1]
\and Pini Zilber\footnotemark[1]}
\DeclareMathOperator*{\argmin}{argmin}
\DeclareMathOperator{\Vectorize}{Vec}
\DeclareMathOperator{\ColNorm}{ColNorm}
\DeclareMathOperator{\Span}{Span}
\newtheorem{theorem}{Theorem}[]
\newtheorem{lemma}{Lemma}[]
\newtheorem{corollary}{Corollary}[]
\newtheorem{remark}{Remark}[]
\begin{document}

\maketitle

\begin{abstract}
We present a new, simple and computationally efficient iterative method 
for low rank matrix completion. Our method is inspired by the class of factorization-type
iterative algorithms, but substantially differs from them in the way the problem is cast.  
Precisely, given a target rank $r$, instead of optimizing on the manifold of rank $r$ matrices, we allow our interim estimated matrix to have a specific over-parametrized
rank $2r$ structure. 
Our algorithm, denoted \texttt{R2RILS}, for rank $2r$ iterative least squares, thus has low memory requirements, and at each iteration it solves a computationally cheap   
sparse least-squares problem. We motivate our algorithm by its theoretical analysis
for the simplified case of a rank-1 matrix.  
Empirically, \texttt{R2RILS} is able to recover ill conditioned low rank matrices from very few observations -- near the information limit,
and it is stable to additive noise. 
%Finally, \texttt{R2RILS} is stable to corruption of the observed entries  
%by additive zero mean Gaussian noise. 
\end{abstract}

\section{Introduction}
Consider the following matrix
completion problem, whereby the goal is to estimate an unknown $m\times n$ matrix $X_0$ given only few of its
entries, possibly corrupted by noise. For this problem to be well posed,  following many previous works, 
%In this work 
we assume that the underlying matrix $X_0$ is exactly of rank $r$, with $r\ll \min(m,n)$
and that it satisfies incoherence conditions as detailed below. For simplicity we further assume that the rank $r$ is a-priori known. 
Formally, let  $\Omega\subset [m]\times [n]$ be the subset of observed indices, 
and $X$ the matrix with observed entries in $\Omega$ and zero in its complement $\Omega^c$. 
For any matrix $A$, denote $\|A\|_{F(\Omega)}^2=\sum_{(i,j)\in\Omega} A_{ij}^2$, with a similar definition for $\|A\|_{F(\Omega^c)}$. Then, the problem is 
\begin{equation}
        \label{eq:LRMC}
\min_Z  \|Z-X\|_{F(\Omega)} \quad \mbox{subject to } {\mbox{ rank}(Z)\leq r }.
\end{equation}
%A related problem, not studied in this work, known as low rank matrix recovery
%considers the matrix sensing case where the observations are dense linear combinations of the entries of $X_{0}$. 

%The above low rank matrix completion problem  
Problem (\ref{eq:LRMC})
is intimately related to matrix factorization and principal component analysis with missing data, %which have a rich history
%dating back to the 1970's \citep{wiberg1976}.
which date back to the 1970's \citep{wiberg1976}. 
Such problems appear in a variety of applications, including 
collaborative filtering, global positioning in wireless sensor networks, system identification and structure from motion,
see \citep{buchanan2005damped,candes2010matrix,davenport2016overview} and references therein. 
In some applications, such as global positioning and structure from motion, the underlying matrix is exactly low rank, though the measurements may be corrupted by noise. In other applications, such as collaborative filtering, the underlying matrix is only approximately low rank. 
For the vast literature on low rank matrix completion,  
%Over the past two decades, low rank matrix completion has been a topic of intensive research, 
see the reviews \citep{candes2010matrix, chi2019matrix, chi2019nonconvex,davenport2016overview}.% and references therein.

In this work we focus on recovery of matrices with an {\em exact} rank $r$. 
For an $m\times n$ matrix $X_0$ of rank $r$, we denote its non-zero singular values by 
$\sigma_1\geq \sigma_2\geq\ldots\geq \sigma_r>0$,  
and its condition number by $\sigma_1 / \sigma_r$. 
On the theoretical front, several works studied perfect recovery of a rank $r$ matrix $X_0$ from only few entries. 
%Clearly, this requires further assumptions on $X_0$ and on the set $\Omega$ of sampled entries. 
A key property allowing the recovery of \(X_{0}\) is  {\em incoherence} of its row and column subspaces \citep{candes2009exact,candes2010power,gross2011recovering}. The ability to recover a low rank matrix is also related to rigidity theory \citep{singer2010uniqueness}. 
Regarding the set \(\Omega\), a necessary condition for well-posedness of the matrix completion problem (\ref{eq:LRMC}) is that $|\Omega|\geq r\cdot(m+n-r)$ which is the number of free parameters for a rank $r$ matrix.  
%Since $X_0$ has $r\cdot(n+m-r)$ free parameters, the set $\Omega$ must have at least this number of entries. 
Another necessary condition is that the 
set $\Omega$ contains at least $r$ entries in each row and column \citep{pimentel2016characterization}.  
When the entries of $\Omega$ are sampled uniformly at random, as few as $O(r (m+n) \mbox{polylog}(m+n))$ entries suffice to exactly recover an incoherent rank $r$ matrix  $X_0$. 
For a given set $\Omega$, we denote its oversampling ratio by 
$\rho=\frac{|\Omega|}{r(m+n-r)}$.
%When each entry has probabilty $p$ to be observed, the set $\Omega$ has on average $pmn$ oberved entries, and we denote the corresponding {\em oversampling ratio} by $\rho=\frac{\mathbb{E}|\Omega|}{r(m+n-r)}$. 
In general, the closer $\rho$ is to the value one, the harder the matrix completion task is.

On the algorithmic side, most methods for low rank matrix completion can be assigned to one of two classes. One class consists of algorithms which optimize over the full $m\times n$ matrix, whereas the second class consists 
of methods that explicitly enforce the rank $r$ constraint in (\ref{eq:LRMC}). 
% of factorization based methods,  with optimization performed on the factors.
Several methods in the first class replace the rank constraint by a low-rank inducing penalty  $g(Z)$. In the absence of noise, 
this leads to the following optimization problem, 
\begin{equation}
        \label{eq:rho_Z_equal}
\min_Z g(Z) \quad \mbox{such that } Z_{ij}=X_{ij} \ \ \forall (i,j)\in\Omega.
\end{equation}
When the observed entries are noisy a popular objective is 
\begin{equation}
        \label{eq:rho_Z_LS}
\min_Z \|Z-X\|_{F(\Omega)}^2 + \lambda g(Z),
\end{equation}
where the parameter $\lambda$ is often tuned via some cross-validation procedure. 

Perhaps the most popular penalty is the nuclear norm, also known as the trace norm,
and given by $g(Z)=\sum_i \sigma_i(Z)$, where $\sigma_i(Z)$ are the singular values of $Z$ 
\citep{fazel2001rank}. 
%The nuclear norm is a convex function, as can be viewed as the matrix analogue 
%of the $\ell_1$ relaxation of the $\ell_0$ constraint in sparse signal recovery. 
%Moreover, this is the tightest
%convex relaxation, since the nuclear norm is the convex envelope of the rank function. 
As this penalty is convex, both (\ref{eq:rho_Z_equal}) and (\ref{eq:rho_Z_LS}) lead to convex semi-definite programs, which may be solved in polynomial time.  However,
even for modest-sized matrices with hundreds of rows and columns, standard solvers have prohibitively long runtimes. Hence, several works proposed fast optimization methods, %for the above and related objectives, see for example 
see
\citep{avron2012efficient,  cai2010singular,fornasier2011low,ji2009accelerated,ma2011fixed,mazumder2010spectral,rennie2005fast, toh2010accelerated} and references therein. 
On the theoretical side, under suitable conditions and with a sufficient number of observed entries, nuclear norm minimization provably recovers, with high probability, the underlying low rank matrix and is also stable to additive noise in the observed entries \citep{candes2010matrix,candes2009exact,candes2010power,
gross2011recovering,recht2011simpler}.

As noted by \citep{tanner2013normalized}, the nuclear norm penalty fails to recover  
low rank matrices at low oversampling ratios.  
Recovery in such data-poor settings is possible using non-convex matrix penalties such as the Schatten $p$-norm with $p<1$  \citep{marjanovic2012l_q,kummerle2018harmonic}. 
However, optimizing such norms may be computationally demanding.  
Figure \ref{fig:HM_comparison} compares the runtime and recovery error of 
\texttt{HM-IRLS} optimizing the Schatten $p$-norm with $p=1/2$ \citep{kummerle2018harmonic} and of our proposed method \texttt{R2RILS}, as a function of matrix size $m$ with $n=m+100$. For example, for a rank-10 matrix of size $700\times 800$, 
\texttt{HM-IRLS}  required over 5000 seconds, whereas \texttt{R2RILS} 
%(implemented in Matlab) 
took about 20 seconds. 

The second class consists of iterative methods that strictly enforce the rank $r$ constraint of Eq. (\ref{eq:LRMC}). 
This includes hard thresholding methods that keep 
%Some methods do so by hard thresholding, keeping 
at each iteration only the top $r$ singular values and vectors \citep{tanner2013normalized,blanchard2015cgiht,kyrillidis2014matrix}. 
More related to our work are methods based on a rank $r$ factorization $Z = U V^\top$ where $U\in\mathbb{R}^{m\times r}$ and $V\in\mathbb{R}^{n\times r}$. 
Problem (\ref{eq:LRMC}) now reads 
\begin{equation}
                \label{eq:min_factorization}
        \min_{U,V} \| U V^\top - X\|_{F(\Omega)}.
\end{equation}
%Problem (\ref{eq:min_factorization}) is more scalable than () as it involves only
%r(m+n)$ variables instead of $mn$ unknowns. 
Whereas Eq. (\ref{eq:rho_Z_LS}) involves $mn$ optimization variables, 
problem (\ref{eq:min_factorization}) involves only
$r(m+n)$ variables, making it scalable to large matrices.

One approach to optimize Eq. (\ref{eq:min_factorization}) is by {\em alternating minimization} \citep{haldar2009rank,keshavan2010matrix,tanner2016low,wen2012solving}. 
%Each iteration first  
%keeps the current estimate of the column space $U$ fixed, and optimizes over $V$, which yields a least squares problem. 
Each iteration first solves a least squares problem for $V$, keeping the column space estimate $U$ fixed. 
Next, keeping the new $V$ fixed, it optimizes over $U$. 
%Examples of this approach include 
%\citep{haldar2009rank,keshavan2010matrix,tanner2016low,wen2012solving}.
%
Under suitable conditions, 
%With a sufficient number of samples, 
alternating minimization provably recover the low rank matrix, 
with high probability \citep{hardt2014understanding,  
jain2015fast,jain2013low,keshavan2010matrix,
sun2016guaranteed}.

%More generally, in the past few years several works have proven the ability of non-convex optimization to reach global optima of various objectives see \citep{chi2019nonconvex}.

\begin{figure}[t]
\centering
\begin{subfigure}{.5\textwidth}
  \centering
\includegraphics[width=0.85\textwidth]{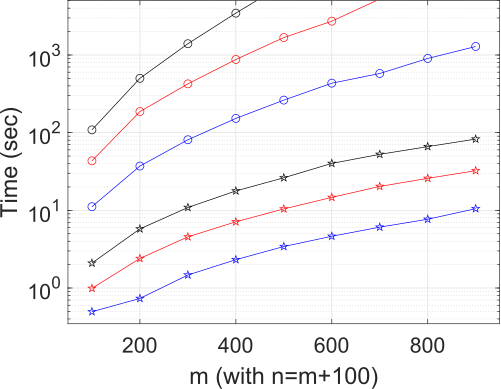}
  \caption{Execution time}
\label{fig:timing_time_comparison}
\end{subfigure}%
\begin{subfigure}{.5\textwidth}
  \centering
\includegraphics[width=0.85\textwidth]{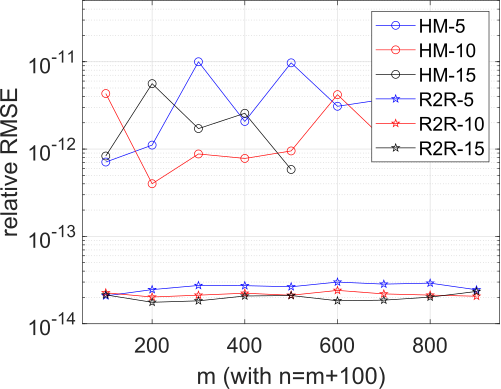}  
  \caption{Recovery RMSE}
  \label{fig:timing_error_comparison}
\end{subfigure}
\caption{Comparison of \texttt{HM-IRLS} \citep{kummerle2018harmonic} and   \texttt{R2RILS} for completion of rank $r$ matrices of size $m\times (m+100)$ as a function of $m$, at an oversampling ratio of $\rho=2.5$. For each $r\in\{5,10,15\}$, all non-zero singular values were one. (a) runtime; (b)  
Relative RMSE on the unobserved entries, Eq. (\ref{eq:rel-RMSE}). Note that the $y$-axis in both graphs is logarithmic. Results of  \texttt{HM-IRLS} at large values of $m$  are not shown, as we capped individual runs to 3 hours. }
        \label{fig:HM_comparison}
\end{figure}

Another iterative approach to optimize (\ref{eq:min_factorization}) was
	proposed in the 1970's by Wiberg \citep{wiberg1976}, and later became popular in the computer vision community.
	%Wiberg's method works as follows \citep{okatani2007wiberg}: 
	Given a guess $V^t$, let $U(V^t)$ be the closed form solution to the alternating step of minimizing (\ref{eq:min_factorization}) with respect to $U$. Wiberg's method
	writes the new $V$ as $V=V^{t}+\Delta V$, and performs a Newton approximation to the functional $\|U(V)V^\top - X\|_{F(\Omega)}$. This yields a degenerate least squares problem for $\Delta V$. 
	%Wiberg's method may be quite slow, in particular for badly conditioned and 
	%large matrices \citep{buchanan2005damped}. 
	In \citep{okatani2011efficient}, a damped Wiberg method
	was proposed with improved convergence and speed. 

Yet a different approach to minimize Eq. (\ref{eq:min_factorization}) is via gradient-based Riemannian manifold optimization \citep{vandereycken2013low,boumal2015low,mishra2014r3mc,mishra2014fixed, ngo2012scaled}.  
For recovery guarantees of such methods, see \citep{wei2016guarantees}.
For scalability to large matrices, \citep{balzano2010online} devised a stochastic gradient descent approach, called \texttt{GROUSE}, whereas
\citep{recht2013parallel} devised a parallel scheme called \texttt{JELLYFISH}. 
Finally, uncertainty quantification in noisy matrix completion was recently addressed in \citep{chen2019inference}. 

While factorization-based methods are 
fast and scalable, 
%computationally fast and scale to large problems, 
they have two limitations: 
(i) several of them fail to recover even mildly ill-conditioned low rank matrices and (ii) they require relatively large oversampling ratios to succeed. 
In applications, the underlying matrices may have a significant spread in their singular values, and clearly the ability to recover a low rank matrix from even a factor of two fewer observations may be of great importance. 

Let us illustrate these two issues. 
%in Figures \ref{fig:test_1} and \ref{fig:test_10}. 
With a full description in Section \ref{sec:numerical_results}, Figure \ref{fig:test_1}  
shows that with a condition number of 1, several popular algorithms recover the low rank matrix. However, as shown in Figure \ref{fig:test_10}, once the condition number is 10, many algorithms 
either require a high oversampling ratio, or
 fail to recover the matrix to high accuracy.  
In contrast, \texttt{R2RILS} recovers the low rank matrices 
from fewer entries and is less sensitive to the ill conditioning.

\begin{figure}[t]
	\centering
	\begin{subfigure}{.5\textwidth}
		\centering
		\includegraphics[width=0.85\linewidth]{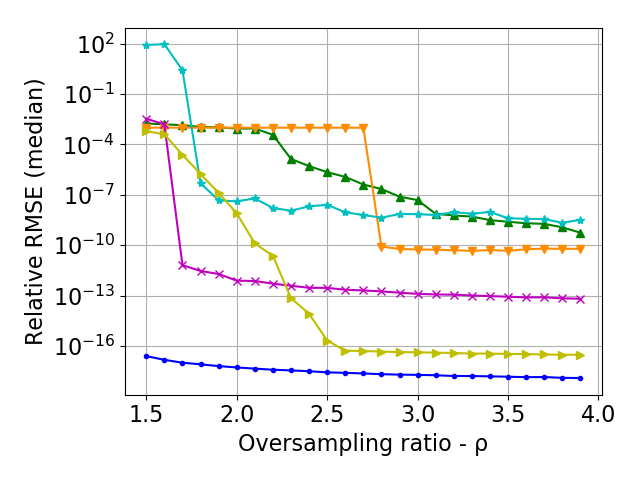}
		\caption{Median of Normalized RMSE}
		\label{fig:cond1_RMSE}
	\end{subfigure}%
	\begin{subfigure}{.5\textwidth}
		\centering
		\includegraphics[width=0.85\linewidth]{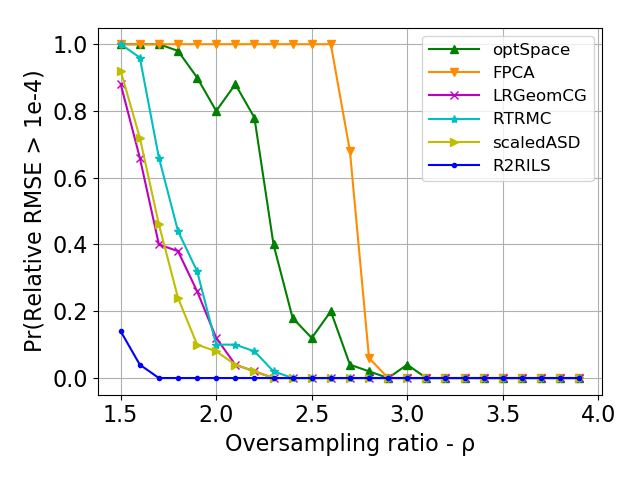}
		\caption{Failure probability}
		\label{fig:cond1_prob}
	\end{subfigure}
	\caption{Comparison of several matrix completion algorithms with well-conditioned matrices of size
		$1000\times 1000$ and rank $r=5$ as a function of the oversampling ratio $\rho$. (a) median of \texttt{rel-RMSE}, Eq. (\ref{eq:rel-RMSE}); (b) failure probability, defined as \texttt{rel-RMSE} $>10^{-4}$. Each point on the two graphs corresponds to  50 independent realizations.}
	\label{fig:test_1}
\end{figure}

\paragraph{Our Contributions}
 In this paper, we present \texttt{R2RILS}, a novel iterative method for matrix completion that is simple to implement, computationally efficient, scalable %to large problems 
 and performs well both with few
observations, ill conditioned matrices and noise.
Described in Section \ref{sec:algorithm}, \texttt{R2RILS} is inspired
by factorization
algorithms. However, it substantially differs from them, since %In particular, 
given a target rank $r$, it does not directly optimize Eq. (\ref{eq:min_factorization}).
Instead, we allow our interim matrix to have a specific over-parametrized rank $2r$ structure.
Optimizing over this rank $2r$ matrix yields a least squares problem. At each iteration, \texttt{R2RILS} 
thus simultaneously 
optimizes the column and row subspaces. A key step %in our algorithm 
is then an {\em averaging} of these new and current estimates. 
%As we illustrate via simulations in Section \ref{sec:numerical_results}, compared to several popular completion methods, 
% \texttt{R2RILS}  
% is able to complete matrices from fewer entries and is highly robust to 
% ill-conditioning of the underlying matrix and to additive noise.

Section \ref{sec:numerical_results} presents an empirical evaluation of \texttt{R2RILS}. First, we consider 
rank $r$ incoherent matrices, with entries observed uniformly at random. 
We show that in noise-free settings, \texttt{R2RILS}  
exactly completes matrices from fewer entries than several other low rank completion methods. We further show that \texttt{R2RILS} is robust to 
 ill-conditioning of the underlying matrix and to additive noise. 
 Next, we 
 consider a different type of ill-conditioning, namely power-law matrices which are much less incoherent. Finally, 
 we compare \texttt{R2RILS} to the damped Wiberg's algorithm
 on two datasets from computer vision, where the later method
 was shown to obtain state-of-the-art results. This comparison also highlights some limitations of our method.

 To provide insight and motivation for our approach, 
  in Section \ref{sec:theory} we study some of its theoretical properties, under the simplified setting of a rank-1 matrix, both
  with noise free observations as well as with observations corrupted by additive Gaussian noise.
  While beyond the scope of the current manuscript, we remark that the approach we present in this work can be extended to several other problems, including Poisson low rank matrix completion, one bit matrix completion, and low rank tensor completion. These will be described in future works.

\begin{figure}[t]
\centering
\begin{subfigure}{.5\textwidth}
  \centering
  \includegraphics[width=0.85\linewidth]{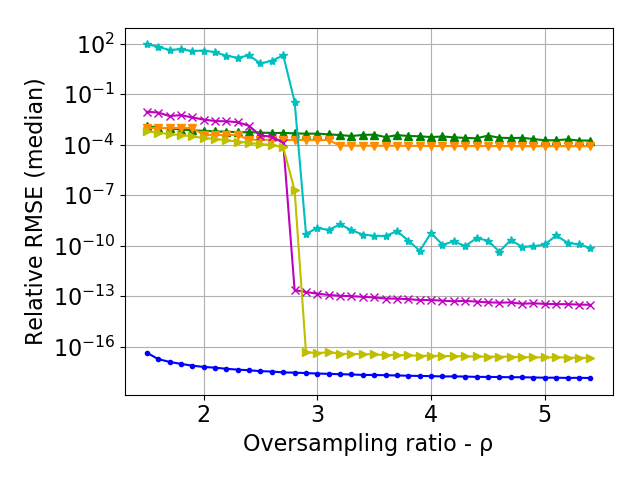}
  \caption{Median of Normalized RMSE}
  \label{fig:cond10_RMSE}
\end{subfigure}%
\begin{subfigure}{.5\textwidth}
  \centering
  \includegraphics[width=0.85\linewidth]{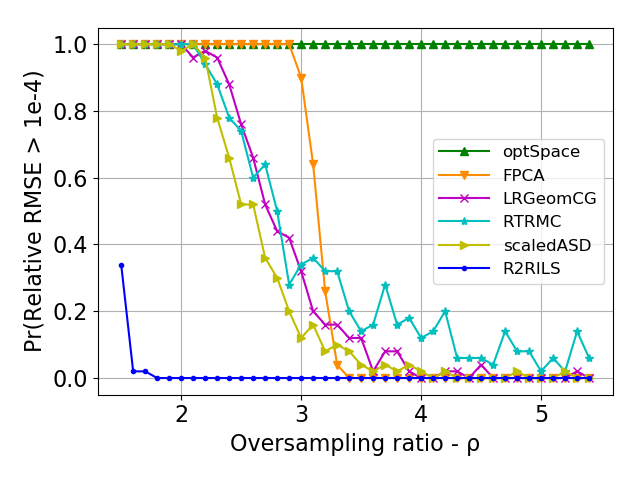}
  \caption{Failure probability}
  \label{fig:cond10_prob}
\end{subfigure}
\caption{
Similar comparison as in Fig.~\ref{fig:test_1}, but now the rank $5$ matrices have singular values $10,8,4,2,1$, resulting in a condition number of $10$.
}
        \label{fig:test_10}
\end{figure}

%%%%%%%%%%%%%%%%%%%%%%%%%%%%%%%%%%%%%%%%%%%
\section{The \texttt{R2RILS} Algorithm}
        \label{sec:algorithm}
While motivated by factorization methods, a key difference of \texttt{R2RILS} is that it does
not directly 
optimize the objective of  Eq. (\ref{eq:min_factorization}). Instead, 
\texttt{R2RILS} utilizes a specific {\em lifting} to the space of rank $2r$ matrices. 
Let $(U_t,V_t)$ be the estimates of the column and row spaces of the rank $r$ matrix $X_0$, at the start of iteration $t$. 
%In our approach, we 
Consider the subspace of rank $2r$ matrices of the following specific form, 
with  $A \in \mathbb{R}^{m\times r}, B \in \mathbb{R}^{n\times r}$. 
\[
U_t B^{\top} + AV_{t}^{\top},
\]
Starting from an initial guess $(U_1,V_1)$
at $t=1$, \texttt{R2RILS} iterates the following two steps:  
%until convergence or until a maximal number of iterations $t_{\max}$.  
%  \\

\noindent
{\bf Step I.} Compute the minimal $\ell_2$-norm solution $\left(\tilde U_t, \tilde V_t\right)$ of the following least squares problem
\begin{equation} \label{eq:ILS_update_rule_step1}
%\left(\tilde{U}_{t}, \tilde{V}_{t}\right) =
\argmin_{A \in \mathbb{R}^{m\times r}, B \in \mathbb{R}^{n\times r}} \| U_{t}B^\top + AV_{t}^\top -  X  \|_{F(\Omega)} .
\end{equation}
{\bf Step II.} Update the row and column subspace estimates,
\begin{equation}
        \label{eq:ILS_update_rule_step2}
        \begin{split}
                U_{t+1} &= \ColNorm\left(U_{t} + \ColNorm\left(\tilde{U}_t\right) \right),\\
                V_{t+1} &= \ColNorm\left(V_{t} + \ColNorm\left(\tilde{V}_t\right) \right),
        \end{split}
\end{equation}
where $\ColNorm(A)$ normalizes all $r$ columns of the matrix $A$ 
to have unit norm. 

\begin{algorithm}[t]
    \caption{\texttt{R2RILS}}   \label{Alg:R2RILS}
    \SetKwInOut{Input}{Input}
    \SetKwInOut{Output}{Output}
    \SetKwRepeat{Do}{do}{while}

%    \underline{function ILS} $(X_{\vert\Omega}, r)$\;
    \Input{
    $\Omega$ - the set of observed entries.\\ 
    $X$ - an $m\times n$ matrix with the observed values in $\Omega$ and zeros in $\Omega^c$, \\
    $r$ - the target rank \\
    $U_1$, $V_1$ - initial guess for the column and row rank-r subspaces \\
    $t_{\max}$ - maximal number of iterations
    }
    %%%%%%%%%%%%%%%%%%%%%%%%%
    \Output{$\hat X$ - rank $r$ approximation of $X$}
    %$[U_1,S,V_1] = \SVD_{r}(X)$\\
    \For{ $t=1,\ldots,t_{\max}$}
    {
        Compute $(\tilde{U}_t, \tilde{V}_t)$, the minimal norm solution of $\argmin\limits_{A \in \mathbb{R}^{m \times r}, B \in \mathbb{R}^{n \times r}} \left\Vert    
        U_{t}B^{\top} + AV_{t}^{\top} - X 
        \right\Vert_{F(\Omega)}$\\
        
                $U_{t+1} = \ColNorm\left(U_{t} + \ColNorm\left(\tilde{U}_t\right) \right)$\\
                
                $V_{t+1} = \ColNorm\left(V_{t} + \ColNorm\left(\tilde{V}_t\right) \right)$        
    }
    \Return $\hat X =$ rank $r$ approximation of $(U_t\tilde{V}_t^{\top} + \tilde{U}_tV_t^{\top})$ at the iteration t which achieved the smallest squared error on the observed entries
\end{algorithm}

At each iteration we compute the rank $r$ projection of 
the rank-$2r$ matrix $U_t\tilde V_t^\top + \tilde U_t V_t^\top$
and its squared error on the observed entries.
The output of \texttt{R2RILS} is the rank $r$ approximation of $(U_t\tilde{V}_t^{\top} + \tilde{U}_tV_t^{\top})$ which minimizes the squared error on the observed entries. 
A pseudo-code of \texttt{R2RILS} appears in Algorithm \ref{Alg:R2RILS}. 
%In it, we initialize $(U_1,V_1)$ by the rank $r$
%SVD of the matrix $X$, denoted by $\SVD_{r}(X)$. 
As we prove in Lemma  \ref{lemma:critical_point}, if \texttt{R2RILS} converges, then its limiting solution is rank $r$. 
%Hence, in practice the last step in the Algorithm, which returns the rank $r$ SVD of $\hat X$ may be omitted. 
Next, we provide intuition for \texttt{R2RILS} and discuss some of its differences from other factorization-based methods. 

\vspace{1em}
\noindent
{\bf Rank Deficiency.}
As discussed in Lemma \ref{lemma:rank_deficiency} below, Eq.~(\ref{eq:ILS_update_rule_step1}) is a rank deficient least squares problem and therefore does not have a unique solution.  
\texttt{R2RILS} takes the solution with smallest Euclidean norm, which is unique. 
%
%As discussed in Section \ref{sec:theory}, taking this minimal norm solution is a critical component
%in the success of our approach. 
We remark that the least squares linear system in Wiberg's method, although different, 
	has a similar rank deficiency, see \citep[Proposition 1]{okatani2007wiberg}. 
\vspace{1em}

\noindent
{\bf Simultaneous Row and Column Optimization.} 
In Eq.~(\ref{eq:ILS_update_rule_step1}) of Step I, \texttt{R2RILS} finds the best
approximation of $X$ by a linear combination  of the current row and column subspace estimates $(U_t,V_t)$, using weight matrices $(A,B)$. 
%Once the optimal
%weights $(\tilde{U}_t,\tilde V_t)$ have been found, one may reverse the roles, and view $(U_t,V_t)$
%as the weights for the column and row spaces spanned by $(\tilde U_t,\tilde V_t)$. 
Thus, %the least squares formulation of 
Eq.~(\ref{eq:ILS_update_rule_step1}) {\em simultaneously} optimizes both
the column and row subpaces, generating new estimates for them 
$(\tilde U_t,\tilde V_t)$.
This scheme is significantly different from alternating minimization methods, which 
at each step optimize only one of the row or column subspaces,   
%In these methods, the row and column subpsaces are strongly coupled, with the optimization performed only on one of them at each time, 
keeping the other fixed. 
In contrast, \texttt{R2RILS} {\em decouples} the estimates, at the expense of lifting to a rank $2r$ intermediate solution, 
\begin{equation}
        \label{eq:X_t_2r}
\hat{X}_t = U_{t}\tilde{V}_{t}^{\top} + \tilde{U}_{t}V_t^{\top}.
\end{equation}
%Despite this fact, since the averaging step encourages a solution to have $\tilde{U}_t = U_t$, $\tilde{V}_t = V_t$ if our method converges it returns a rank $r$ estimate.
\noindent
{\bf Tangent Space.} Another prism to look at Eq.~(\ref{eq:ILS_update_rule_step1}) is through its connection to the tangent space of the manifold of rank $r$ matrices. Consider a rank $r$ matrix with column and row subspaces spanned by $U_t$, $V_t$, i.e. $Z = U_t M V_t^{\top}$ where $M \in \mathbb{R}^{r\times r}$ is invertible. Then the rank $2r$ matrix of Eq.~(\ref{eq:X_t_2r}) is the best approximation of $X$ in the {\em tangent space} of $Z$, in least squares sense.
\vspace{1em}

\noindent
{\bf Averaging Current and New Estimates.} 
Since $(\tilde{U_{t}}, \tilde{V_{t}})$ can be thought of as 
new estimates for the column and row spaces, it is tempting to consider the update  
$U_{t+1}= \tilde{U}_{t}, V_{t+1} = \tilde{V}_{t}$, followed by column normalization.
While this update may seem attractive, leading to a non increasing sequence of losses for the objective in Eq.~(\ref{eq:ILS_update_rule_step1}), it performs very poorly.  
Empirically, this update decreases the objective in Eq.~(\ref{eq:ILS_update_rule_step1}) extremely slowly, 
taking thousands of iterations to converge. 
Moreover, as illustrated in Figure \ref{fig:Naive},
the resulting sequence
$\{(U_t,V_t)\}_t$ alternates between two sets of poor estimates. 
%We illustrate this behavior in Figure \ref{fig:Naive}, for a $400\times 500$ underlying matrix of rank 3, at an oversampling ratio of $\rho=5$. 
The left panel shows the RMSE on the observed entries versus iteration number for \texttt{R2RILS}, and for 
a variant whose update is $U_{t+1}= \tilde{U}_{t}, V_{t+1} = \tilde{V}_{t}$,
followed by column normalization. Both methods were initialized by the SVD of $X$. While \texttt{R2RILS} converged in 6 iterations, the naive method failed to converge even after 500 iterations. The right panel presents a 2-d visualization of the last 50 vectors $U_{t,1}$ under the naive update, as projected into the first and second SVD components of the matrix containing the last 50 values of $U_{t}$ for $t=451:500$. This graph shows an oscillating behavior between two poor vectors. 
Remark~\ref{rem:optimal_weighting} explains this behavior, in the rank-one case.

\begin{figure}[t]
	\centering
%	\begin{subfigure}{.5\textwidth}
%		\centering
		\includegraphics[width=0.42\linewidth]{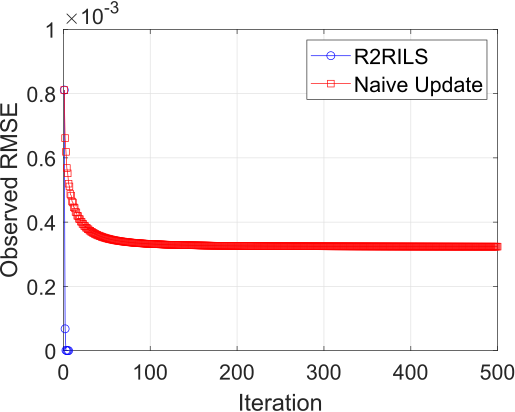}
		%\caption{Median of Normalized RMSE}
		%\label{fig:cond1_RMSE}
%	\end{subfigure}%
%	\begin{subfigure}{.5\textwidth}
%		\centering
		\includegraphics[width=0.43\linewidth]{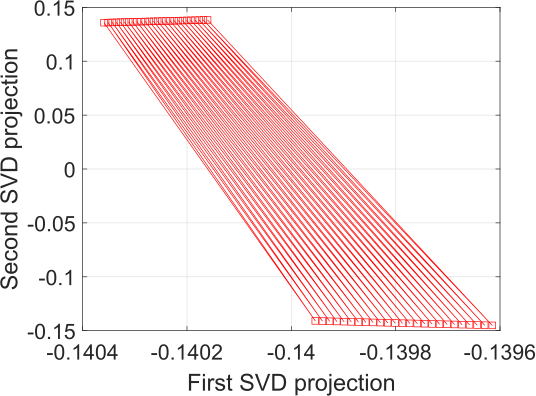}
		%\caption{Failure probability}
		%\label{fig:cond1_prob}
%	\end{subfigure}
	\caption{Comparison of \texttt{R2RILS} versus a variant with a naive update. The input is a $400\times 500$ matrix of rank $r=3$, with observed entries uniformly at random at an oversampling ratio $\rho=5$. (a) Observed RMSE as a function of iteration counter. 
    (b) Two dimensional visualization of the oscillating dynamics of $U_t$ under the naive update rule. }
	\label{fig:Naive}
\end{figure}

Nonetheless, thinking of $(\tilde{U}_t, \tilde{V}_t)$ as new estimates provides a useful perspective. In particular, if the error in $(\tilde{U}_t, \tilde{V}_t)$
is in a different direction than the error in the initial estimate $(U_t,V_t)$ or better yet, is approximately in the {\em opposite} direction, then the sensible operation
to perform is to {\em average} these two estimates.  This is indeed what \texttt{R2RILS} does in its second step.
In Section \ref{sec:theory} we show that in the rank-$1$ case, when the entire matrix is observed, the errors in $(\tilde{U}_{t}, \tilde{V}_{t})$ are indeed approximately in the opposite direction. Furthermore, we show that when the previous estimates are already close to the ground truth, the equal weighting of previous and current estimates is asymptotically optimal.
Specifically, this averaging cancels the leading order terms of the errors, and leads to quadratic convergence.
\vspace{1em}

\noindent
{\bf Non-Local Updates.}
Several Riemannian optimization methods, such as \texttt{LRGeomCG} \citep{vandereycken2013low} and \texttt{RTRMC} \citep{boumal2015low}, perform {\em local} optimization on the manifold of rank $r$ matrices, based on the gradient at the current solution. Our update rule is significantly different from these methods, since in Step I, we find the {\em global} minimizer of  Eq.~(\ref{eq:ILS_update_rule_step1}) in a specific rank $2r$ subspace. 
Given the averaging operation in the second step of \texttt{R2RILS}, 
its next estimate $(U_{t+1},V_{t+1})$ may be far from the current one $(U_t,V_t)$, in particular in the first few iterations.
\vspace{1em}

\noindent
{\bf Invariant Alternatives.} 
%As illustrated in Section \ref{sec:numerical_results}, \texttt{R2RILS} is able to perfectly recover a rank $r$ matrix from few entries under various sampling schemes. However, note that the final quantities $(U_t,V_t)$ are {\em not} the left and right singular vectors of $X_0$. While they span the relevant subspaces,
%they are not orthogonal. 
%When \texttt{R2RILS} perfectly recovers a matrix, the resulting $(U_t,V_t)$ are {\em not} the left and right singular
%vectors of $X_0$.  
%This should not come as a surprise, given the inherent gauge ambiguity of the problem \eqref{eq:LRMC}. Specifically, if $(U,V)$ is a solution, then so is
%$\left(U R^{1/2}, V \left(R^{-1/2}\right)^\top\right)$ for any invertible $r\times r$ matrix $R$.
An intriguing property of \texttt{R2RILS} is that
its next estimate depends explicitly on the specific columns of $(U_t,V_t)$, and not only on the subspace they span. 
 %it is {\em dependent} on the representation of its iterates. 
%Specifically, the next estimate $(U_{t+1},V_{t+1})$ depends on the $r$ specific columns of $(U_t,V_t)$ and not only on the subspace which they span.
That is, \texttt{R2RILS} is not invariant to the representation of the current subspace, and 
%in particular it 
does not treat $(U_t,V_t)$ as elements on the Grassmannian. 
%For instance if one was to permute a pair of $V_t$'s columns at every iteration the algorithm would either never converge of converge to an estimate of rank $r-1$ (as for convergence to occur under this circumstance we must have that two columns of $V_t$ are identical). 
%
It is possible to devise variants of \texttt{R2RILS} that are invariant to the subspace representation. 
%
%From a theoretical perspective, it may be easier to analyze a modified \texttt{R2RILS} algorithm that is invariant to the subspace representation.
One way to do so is to update $(U_{t+1}, V_{t+1})$ as the average subspace between $(U_t, V_t)$ and $(\tilde{U}_t, \tilde{V}_t)$, with respect to say the standard Stiefel geometry. 
Another invariant alternative is to take the best rank $r$ approximation of $\hat X_t$ from (\ref{eq:X_t_2r}) as the next estimate.
While these variants work well at high oversampling ratios, the simple column averaging %of the column wise normalized vectors of 
Eq.~(\ref{eq:ILS_update_rule_step2}) outperforms them at low oversampling ratios. 
A theoretical understanding 
of this behavior
%why the simple averaging performs better than these invariant modifications 
is an interesting topic for future research. 
\vspace{1em}

\noindent
{\bf Initialization.} Similar to other iterative algorithms,
	\texttt{R2RILS} requires an initial guess $U_1,V_1$. 
	When the observed entries are distributed uniformly at random, 
	a common choice is to initialize $U_1,V_1$ by the
	top $r$ left and right singular vectors of $X$. As described
	in \citep{keshavan2010matrix}, for a trimmed variant of $X$, these quantities are an accurate estimate of
	the left and right subspaces. In contrast, when the distribution of the observed entries is far from uniform, the rank-$r$ SVD of $X$ may be a poor initialization, and in various applications, one starts from a random guess, see \citep{okatani2011efficient}. 
Empirically, \texttt{R2RILS} performs well also from a random initialization, say with i.i.d.~$N(0,I)$ Gaussian vectors,
though it may require more iterations to converge. 
This suggests that the sequence $(U_t,V_t)$ computed by \texttt{R2RILS} is not attracted to poor local minima.
This finding is in accordance with \citep{ge2016matrix}, that rigorously proved lack of poor local minima for the matrix completion problem under suitable assumptions.
%We remark that initialization free methods for various non convex optimization problems related to matrix completion have gained much attention recently, see the review \citep{chi2019nonconvex}. %and references therein.

\vspace{1em}
\noindent
{\bf Early Stopping.} In the pseudo-code of Algorithm \ref{Alg:R2RILS}, the number of iterations is fixed at $t_{\max}$. In most of our simulations, we set $t_{\max}=300$, though 
in practice, the algorithm often converged in much fewer iterations. 
In our code we implemented several early stopping criteria.
Let $\hat X^{t}_r=\mbox{SVD}_r(U_t\tilde V^\top + \tilde U V_t^{\top} )$ denote the rank-$r$ approximation of $\hat X_t$ from Eq.~\eqref{eq:X_t_2r}, and let 
	$\mbox{RMSE}_{\text{obs}}^t = \|\hat X^{t}_r - X\|_{F(\Omega)}/\sqrt{|\Omega|}$ be the corresponding root mean squared error on the observed entries.
Then the first criterion, relevant only 
to the noise-free case, is
\begin{equation*}
\mbox{Stop if} \qquad \mbox{RMSE}_{\text{obs}}^t \leq \epsilon,
\end{equation*}
taking for instance $\epsilon \leq 10^{-15}$.
A second stopping criterion, valid also in the noisy case, is
\[
\mbox{Stop if} \qquad \frac{ \| \hat{X}_{t} - \hat{X}_{t-1}\|_{F} }{\sqrt{m n }}\leq \epsilon .
\]
Finally, a third stopping criterion, useful in particular with real data, is to set $\delta \ll 1$ and 
\[
\mbox{Stop if} \qquad | \mbox{RMSE}_{obs}^t- \mbox{RMSE}_{obs}^{t-1} | \leq \delta \cdot \mbox{RMSE}_{obs}^t
\]
%\vspace{1em}

\noindent
{\bf Computational complexity.} Our Matlab and Python implementations of \texttt{R2RILS}, available at the author's website\footnote{\texttt{www.wisdom.weizmann.ac.il/$\sim$nadler/Projects/R2RILS/}}, use standard linear algebra packages. Specifically, 
the minimal norm solution of Eq.~(\ref{eq:ILS_update_rule_step1}) is calculated by the LSQR iterative algorithm \citep{paige1982lsqr}.
%(In python via the standard \texttt{Scipy} sparse linear algebra module). 
The cost per iteration of LSQR is discussed in \citep[Section 7.7]{paige1982lsqr}.
In our case, it is dominated by 
%the application of the least squares matrix to the vectors of unknowns, 
%namely computing 
the computation of $U_{t}B^\top + AV_{t}^\top$ at all entries in $\Omega$, whose complexity is 
$\mathcal{O}(r|\Omega|)$. 
LSQR is mathematically equivalent to conjugate gradient applied to the normal equations. As studied in 
\citep[Section 4]{hayami2018convergence}, the residual error after $k$ iterations decays like 
\[
C \left(\frac{\sigma_{\max}-\sigma_{\min}}{\sigma_{\max}+\sigma_{\min}}\right)^k,  
\]
where $\sigma_{\max}$ and $\sigma_{\min}$ are the largest and smallest non-zero singular values of the rank-deficient matrix
of the least squares problem (\ref{eq:ILS_update_rule_step1}). 
Empirically, at an  oversampling ratio $\rho=2$ and matrices of size $300\times 300$, the above quotient is often smaller than $0.9$.
Thus, LSQR often requires at most a few hundreds of inner iterations to converge. 
For larger sized matrices and more challenging instances, more iterations may be needed for a very accurate solution. We capped
the maximal number of LSQR iterations at 4000. 

\vspace{1em}
\noindent
{\bf Attenuating Non-convergence.} 
On challenging problem instances, where \texttt{R2RILS} fails to find the global minimum solution, empirically the reason is
not convergence to a bad local minima. Instead, due to its global updates, $(U_t,V_t)$ often oscillates in some orbit. To attenuate this behavior, if \texttt{R2RILS} did not coverge within say the first 40 iterations, then once every few iterations we replace the averaging in Eq.~(\ref{eq:ILS_update_rule_step2}) with a weighted averaging, which gives more weight to the previous estimate, of the form
\[
U_{t+1} = \ColNorm\left(\beta U_{t} + \ColNorm\left(\tilde{U}_t\right) \right),
\]
and a similar formula for $V_{t+1}$. In our simulations we took $\beta=1+\sqrt{2}$. Empirically, this modification increased the cases where \texttt{R2RILS} converged within $t_{\max}$ iterations.  

%%%%%%%%%%%%%%%%%%%%%%%%%%%%%%%%%%%%%%%%%%%%%%%%%%%%%%%

\section{Numerical Results}
        \label{sec:numerical_results}
We present simulation results that demonstrate the performance of \texttt{R2RILS}.
In the following experiments random matrices were generated according to the uniform model. Specifically, $\{u_i\}_{i=1}^{r}, \{v_i\}_{i=1}^{r}$ were constructed by drawing $r$ vectors uniformly at random from the unit spheres in $\mathbb{R}^{m}, \mathbb{R}^{n}$ respectively and then orthonormalizing them. At every simulation we specify the singular values $\{\sigma_{i}\}_{i=1}^r$ and construct the rank $r$ matrix $X_{0}$ as
\[X_0 = \sum_{i =1}^{r}\sigma_i  u_i v_i^{T}.
\]
With high probability, the matrix $X_{0}$ is incoherent \citep{candes2009exact}. 
At each oversampling ratio $\rho$, we generate a random set $\Omega$ of observed entries by flipping a coin with probability $p=\rho \cdot \frac{r(m+n-r)}{(m\cdot n)}$ at each of the $mn$ matrix entries. The size of $\Omega$ is thus variable and distributed as $\mbox{Binom}(m\cdot n,p)$.  
As in \citep{kummerle2018harmonic}, we then verify that each column and row have at least $r$ visible entries and repeat this process until this necessary condition for unique recovery is satisfied. 
%A similar verification was done in \citep{kummerle2018harmonic}, 
%and is important at low oversampling ratios. 

We compare \texttt{R2RILS} with maximal number of iterations $t_{\max}=100$ to the following algorithms, 
using the implementations supplied by the respective authors. 
As detailed below, for some of them we slightly tuned their parameters to improve their performance. 
\begin{itemize}
\item \texttt{OptSpace} \citep{keshavan2010matrix}: Maximal number of iterations set to 100. Tolerance parameter $10^{-10}$.
\item \texttt{FPCA} \citep{ma2011fixed}: Forced the implementation to use a configuration for "hard" problem where several parameters are tightened. The two tolerance parameters were set to $10^{-16}$.
\item \texttt{LRGeomCG} \citep{vandereycken2013low}: Executed with its default parameters.
\item \texttt{RTRMC} \citep{boumal2015low}: Maximal number of iterations 300, maximal number of inner iterations set to 500. The gradient tolerance was set to $10^{-10}$.
\item \texttt{ScaledASD} \citep{tanner2016low}: Tolerance parameters $10^{-14}$ and maximal number of iterations 1000. 
\item \texttt{HM-ILS} \citep{kummerle2018harmonic}: Executed with its default parameters.
\end{itemize}

We considered two performance measures. The first is the relative RMSE per-entry over the unobserved entries.
Given an estimated matrix $\hat X$, %computed from a set $\Omega$ of observed entries, 
this quantity is defined as
\begin{equation}
	\label{eq:rel-RMSE}
\texttt{rel-RMSE}=\sqrt{\frac{m \cdot n}{|\Omega^{c}|}} \cdot \frac{\|\hat{X} - X_0 \|_{F(\Omega^{c})} }{\| X_0\|_{F}} .
\end{equation}
The second measure is the success probability of an algorithm in the ideal setting of noise-free observations. 
We define success as $\texttt{rel-RMSE} < 10^{-4}$. This is similar to \citep{tanner2016low}, who computed a relative RMSE on all matrix entries, and considered a recovery successful with a slightly looser threshold of $10^{-3}$. 
We compare \texttt{R2RILS} to all the above algorithms except for \texttt{HM-ILS} which will be discussed separately.
In addition, we also tested the \texttt{R3MC} algorithm \citep{mishra2014r3mc} and the damped \texttt{Wiberg} method of \citep{okatani2011efficient}. 
To limit the number of methods shown in the plots of Figs. \ref{fig:test_1} and \ref{fig:test_10}, the performance of these two methods is not shown in these figures. However, their performance was similar to that of other Riemannian optimization based methods.

\vspace{1em}

%$ $\\
\noindent
{\bf Well conditioned setting.} In our first experiment, we considered a relatively easy setting with well conditioned matrices of size $1000
\times 1000$ and rank $r=5$, whose
non zero singular values were all set to 1. Figure \ref{fig:test_1} shows the reconstruction ability of various algorithms as a function of the oversampling ratio $\rho$. 
In this scenario all algorithms 
successfully recover the matrix once enough entries are observed. Even in this relatively easy setting, \texttt{R2RILS} shows favorable performance at low oversampling ratios, reaching a relative RMSE around $10^{-14}$.

\vspace{1em}

\noindent
{\bf Mild ill-conditioning.} 
Next, we consider a mild ill-conditioning setting, where the rank $r=5$ matrices 
%Next, we considered a similar setting but with mild ill-conditioning. Now the rank $r=5$ matrices
have a condition number 10, and non-zero singular values $10,8,4,2,1$. As seen in Fig.~\ref{fig:test_10},
 \texttt{R2RILS} is barely affected by this ill-conditioning and continues to recover the underlying matrix with error $10^{-14}$ 
 at oversampling ratios larger than $1.5$. 
 In contrast, \texttt{FPCA}, which performs nuclear norm minimization, recovers the matrix at higher oversampling ratios $\rho>3.4$.
 This is in accordance to similar observations by previous works \citep{tanner2013normalized,kummerle2018harmonic}.
 The other compared algorithms, all of which solve non-convex problems, also require higher oversampling ratios
 than \texttt{R2RILS}, and even then, occasionally fail to achieve a relative RMSE less than $10^{-4}$.

\begin{figure}[t]
\centering
\begin{subfigure}{.5\textwidth}
  \centering
  \includegraphics[width=0.85\textwidth]{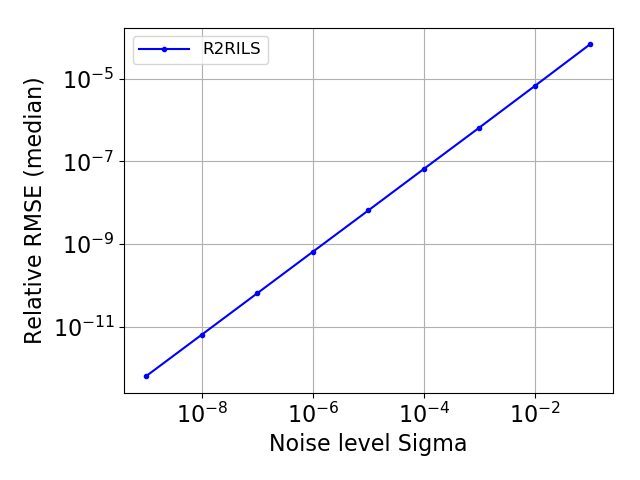}
  \caption{Oversampling 2.5}
  \label{fig:noise_25_RMSE}
\end{subfigure}%
\begin{subfigure}{.5\textwidth}
  \centering
  \includegraphics[width=0.85\linewidth]{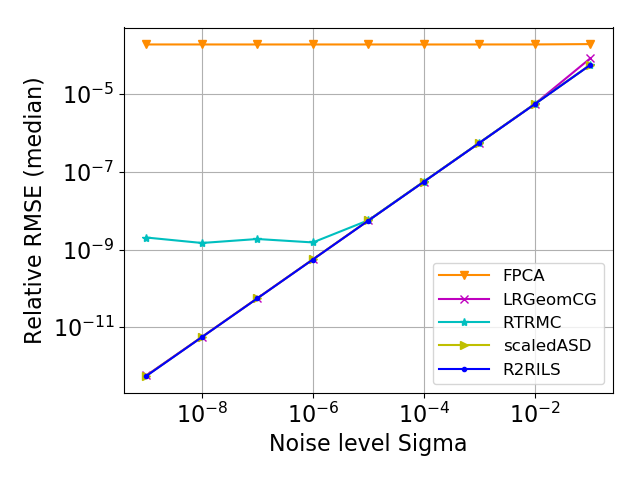}
  \caption{Oversampling 3}
  \label{fig:noise_3_RMSE}
\end{subfigure}
\caption{Comparison of several matrix completion algorithms with ill-conditioned matrices and entries corrupted by additive Gaussian noise.
Matrices were drawn as in the simulations of Fig.~\ref{fig:test_1}. We plot the RMSE per unobserved entry as a function of the standard deviation of the noise.  Each point on the graphs corresponds to  50 independent realizations.	}
\label{fig:test_noise}
\end{figure}

\begin{figure}[t]
\centering
%\begin{subfigure}{.5\textwidth}
%  \centering
  \includegraphics[width=0.4\textwidth]{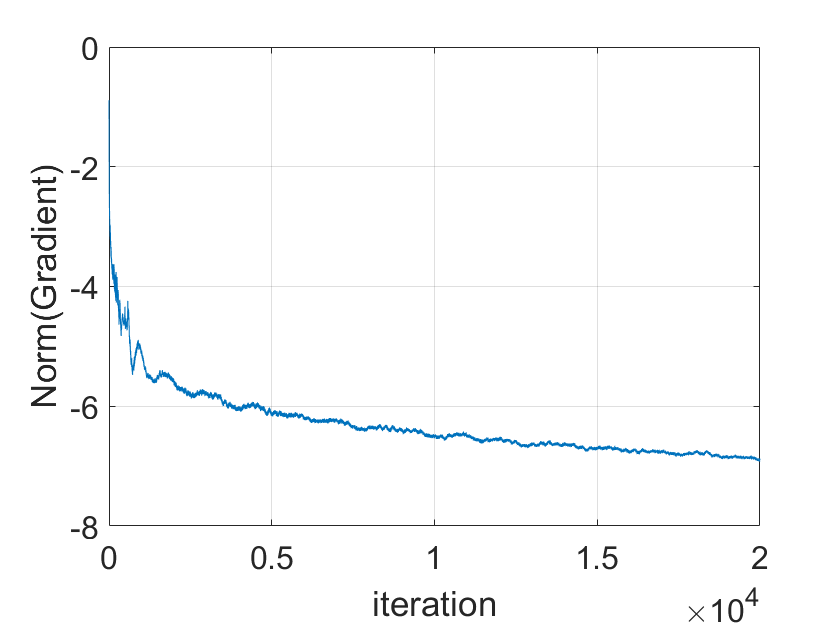}
%  \caption{}
%  \label{fig:LRGeom_gradient}
%\end{subfigure}%
%\begin{subfigure}{.5\textwidth}
 % \centering
  \includegraphics[width=0.4\linewidth]{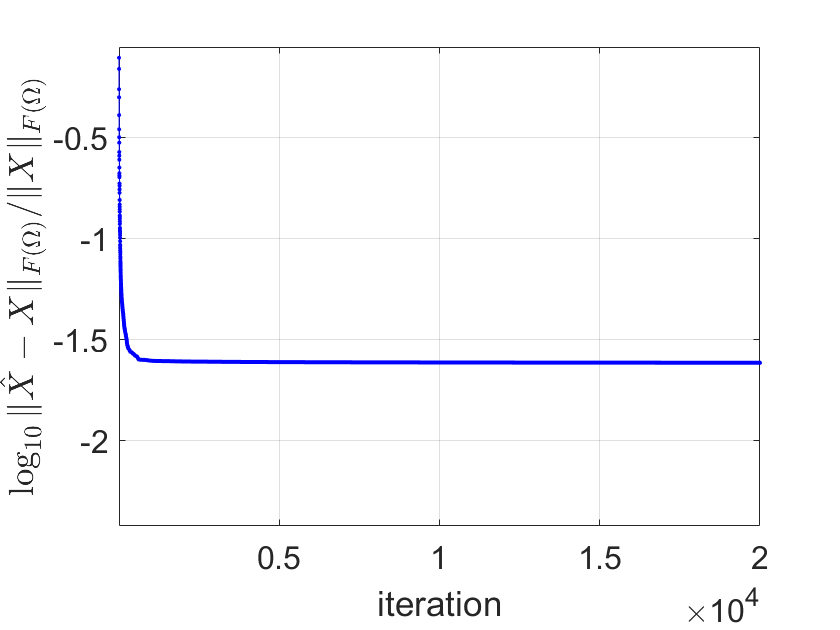}
 % \caption{}
 % \label{fig:LRGeom_error}
%\end{subfigure}
\caption{Typical optimization path of \texttt{LRGeomCG} on a $1000\times 1000$ matrix of rank $r=5$, condition number 10, at oversampling ratio $\rho=2.2$. (Left) The norm of the gradient at each iteration on a logarithmic scale. (Right) The
normalized error on the observed entries. }
\label{fig:LRGeom}
\end{figure}

\vspace{1em}
\noindent
{\bf Comparison to \texttt{HM-ILS}.} 
The \texttt{HM-ILS} algorithm  \citep{kummerle2018harmonic} was not included in the above simulations due to its 
slow runtime. % (over 20 minutes for a rank-5 $1000\times 1000$ matrix). 
%In the simulations described above, we did not include the \texttt{HM-ILS} algorithm of \citep{kummerle2018harmonic}.
%The reason, as illustrated in Fig.~\ref{fig:HM_comparison}, is its execution time of over 20 minutes to recover a $1000\times 1000$ rank-5 matrix. 
However, from a limited evaluation with smaller sized matrices \texttt{HM-ILS} with Schatten $p$-norm parameter $p=1/2$ has excellent performance, 
comparable to \texttt{R2RILS}, 
also under ill-conditioning. Figure \ref{fig:timing_error_comparison} demonstrates that at a low oversampling ratio $\rho=2.5$, both algorithms perfectly reconstruct matrices of various dimensions. 
Figure \ref{fig:timing_time_comparison} shows that 
\texttt{R2RILS} is faster than \texttt{HM-ILS} by orders of magnitude even for modestly sized matrices.
%compares the runtime of \texttt{HM-ILS} and \texttt{R2RILS}. 
%Each point on this graph is the average of only 2 realizations, as creating this figure took several days  of runtime. 
%As seen, 

\vspace{1em}
\noindent
{\bf Comparison to Riemannian Manifold Optimization Methods.} 
%To further explore the difference between \texttt{R2RILS} and Riemannian based optimization methods, 
We considered the optimization path as well as the 
output of the \texttt{LRGeomCG} algorithm on an instance where it failed to exactly complete the matrix within an increased number of 20000 iterations. Figure~\ref{fig:LRGeom} shows the norm of the gradient and the normalized error on the observed entries of \texttt{LRGeomCG} versus iteration count. 
On this instance, 
the normalized error on the observed entries was 0.0241, whereas on the unobserved entries it was 9.11. 
It seems that at low oversampling ratios, \texttt{LRGeomCG} converges very slowly to a bad local optimum. 
Interestingly, starting from this solution of \texttt{LRGeomCG}, \texttt{R2RILS} recovers  the low rank matrix within
only 20 iterations, with an RMSE of $10^{-13}$. We observed the same behavior for 10 different matrices.
%and in all cases, initializing with the sub-optimal solution of \texttt{LRGeomCG}, \texttt{R2RILS}  converged to the true underlying matrix. 
It thus seems that the lifting to a rank $2r$ matrix leads to fewer bad local minima in the optimization landscape. 
A theoretical study of this issue is an interesting topic for further research.

\begin{figure}[t]
	\centering
	\includegraphics[width=.425\textwidth]{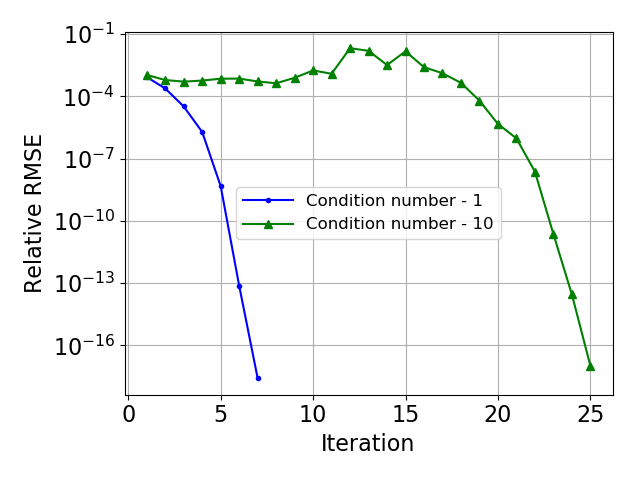}
	\caption{Relative RMSE per {\em observed} entry of \texttt{R2RILS} 
		as a function of the iteration number. Each line represents a single execution of \texttt{R2RILS} on a $1000 \times 1000$ rank-5 matrix, with oversampling ratio $\rho=2.5$. For the condition-1 matrix all non-zero singular values were set to 1. For the matrix with condition number-10 singular values were set to $10, 8, 4, 2, 1$.}
	\label{fig:convergence_rate}
\end{figure}

\vspace{1em}
\noindent
{\bf Stability to Noise.} Figure \ref{fig:test_noise} illustrates the performance of several matrix completion algorithms when i.i.d.~zero mean Gaussian noise is added to every observed entry of $X_0$. As seen in panel \ref{fig:noise_25_RMSE}, \text{R2RILS} is robust to noise even at low oversampling ratios, with
the RMSE linear in the noise level. In Section~\ref{sec:theory}, we prove this result in the case of a fully observed rank-$1$ matrix.
Panel \ref{fig:noise_3_RMSE} shows that most algorithms are also robust to noise, but only at higher oversampling ratios 
%compared to \texttt{R2RILS},
where they start to work in the absence of noise.
Algorithms that even without noise failed to recover at an oversampling ratio $\rho=3$ are not included in this graph. 
%Other algorithms are not included in this graph since at this oversampling ratio even without noise they fail to reconstruct $X_{0}$ with high probability.

\vspace{1em}
\noindent
{\bf Convergence Rate.} 
%As discussed in Section \ref{sec:theory}, for a simplified rank-1 case, \texttt{R2RILS} has local quadratic convergence. 
Figure \ref{fig:convergence_rate} illustrates the relative RMSE of \texttt{R2RILS} per observed entry, Eq.~(\ref{eq:rel-RMSE}),  as a function of the iteration number,  on two rank-5 matrices of dimension $1000\times 1000$, oversampling ratio 2.5 and condition numbers 1 and 10. 
%The corresponding non-zero singular values are either all 1, or are set to $10, 8, 4, 2, 1$. 
It can be observed that \texttt{R2RILS}'s convergence is very quick once it reaches a small enough error. It is also interesting to note that \texttt{R2RILS} does not monotonically decrease the objective in Eq.~(\ref{eq:ILS_update_rule_step1}) at every iteration.

\vspace{1em}
\noindent
{\bf Power-law Matrices.}
Another form of ill conditioning %in low rank matrix completion may 
occurs when the row and/or column subspaces are only weakly incoherent. 
As in \citep{chen2015completing}, we consider power-law matrices of the form 
$X=D U V^\top D$ where the entries of $U$ and $V$ are i.i.d. $\mathcal N(0,1)$ and $D=D(\alpha)$ is a diagonal matrix
with power-law decay, $D_{ii}=i^{-\alpha}$. When $\alpha=0$ the matrix is highly incoherent and relatively easy to recover. 
When $\alpha=1$ it is highly coherent
and more difficult to complete from few entries. Given a budget on the number of observed entries, a 2-step sampling scheme was proposed in \citep{chen2015completing}. As shown in \citep[Fig. 1]{chen2015completing}, 
%with their adaptive sampling, 
$10n\log(n)$ adaptively chosen samples were sufficient to recover 
rank-5 matrices of size $500\times 500$ by nuclear norm minimization, for values of $\alpha\leq 0.9$. In contrast, with entries observed uniformly at random, nuclear norm minimization
required over
$300n\log(n)$ entries for $\alpha\geq 0.8$. 
% In their simulations, they declared a recovery successful 
%if $\|\hat X-X\|_F/\|X\|_F \leq 0.01$.  
Figure~\ref{fig:power_law} shows the rate of successful recovery (as in \citep{chen2015completing}, defined as $\|\hat X-X\|_F/\|X\|_F \leq 0.01$) 
by \texttt{R2RILS}
as a function of number of entries, chosen uniformly at random while only verifying that each row and column has at least $r=5$ entries. As seen in the figure, 
\texttt{R2RILS} recovers the underlying matrices with less than $10n\log(n)$ entries up to $\alpha=0.8$,
namely without requiring sophisticated adaptive sampling. 
%Whether \texttt{R2RILS} can recover such matrices with even fewer entries using
%the scheme proposed in \citep{chen2015completing}, or some other scheme, is an interesting topic for future research.

\begin{figure}[t]
\centering
\includegraphics[width=.4\textwidth]{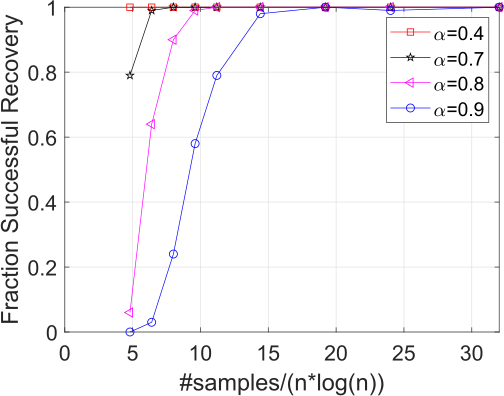}
\caption{Recovery of power-law matrices by \texttt{R2RILS}.}
\label{fig:power_law}
\end{figure}

\vspace{1em}
\noindent
{\bf Datasets from Computer Vision.}
We conclude the experimental section with results on 
%an illustration of the performance of \texttt{R2RILS} on 
two benchmark datasets\footnote{Datasets available at \texttt{https://github.com/jhh37/lrmf-datasets/find/master}} from the computer vision literature
 \citep{buchanan2005damped,hyeong2015secrets}. 
We compare \texttt{R2RILS} with the \texttt{damped Wiberg} 
method\footnote{Code downloaded from http://www.vision.is.tohoku.ac.jp/us/download/.
The method was run with default parameters as suggested in \citep{okatani2011efficient}.},
which is one of the top performing methods on these benchmarks \citep{okatani2011efficient}. 
In these datasets, the goal is to construct a matrix $\hat X$ of given rank $r$, 
whose RMSE at the observed entries is as small as possible. 
%which is as close as possible
%to the observed entries of $X$. Hence, the quality of a solution is judged by its RMSE on the observed entries. 
The observed indices are structured. Hence, the SVD
of the observed matrix does not provide an accurate initial guess. Instead, it is common to initialize methods 
%for low rank matrix completion 
with random $U$ and $V$, whose entries are i.i.d.~$\mathcal N(0,1)$. The quality of an algorithm is assessed by its ability to
converge to a solution with low observed RMSE, and by the number of iterations and overall clock time it takes to do so, see \citep{hyeong2015secrets}. 

The first dataset is Din (Dino Trimmed in \citep{hyeong2015secrets}), involving the recovery of
the rigid turntable motion of a toy dinosaur. The task is to fit a rank $r=4$ matrix of size $72\times 319$, given 5302 observed entries (23.1\%).
The best known solution has RMSE 1.084673, and condition number 12.4.
Starting from a random guess, \texttt{R2RILS} with maximal number of iterations $t_{\max}=300$ achieved this objective in 99/100 runs. 
The least squares system that \texttt{R2RILS} has to solve is very ill-conditioned for this dataset. We thus also considered a variant, whereby we normalized the columns of the least squares matrix to have unit norm. This variant attained the optimal
objective in all 100 runs, and on average converged in fewer number of iterations. 
Figure \ref{fig:CV} (left) shows 
%that \texttt{R2RILS} and this variant converged in fewer iterations than
the cumulative %success 
rate of attaining this objective for the three tested algorithms. 
%We can see that \texttt{R2RILS} and its normalized variant converged within fewer iterations. 
%This compares favorably also with the 
%\texttt{DRW2P} method, see \citep[Figure 4(a)]{hyeong2015secrets}.

The second dataset is UB4 (Um-boy). Here the task is to fit a rank $r=4$ matrix of size $110\times 1760$ given 14.4\% of its entries. 
The best
known solution has observed RMSE 1.266484 and condition number 24.5. Out of 50 runs, \texttt{damped Wiberg} attained this RMSE in 13 cases, and the normalized \texttt{R2RILS} variant in 17 runs.  
The lowest RMSE obtained by \texttt{R2RILS} was slightly higher (1.266597) and this occured in 18 cases. 
This suggests that \texttt{R2RILS} may be improved by preconditioning the linear system. 
A practically important remark is that in these datasets, each iteration of \texttt{damped Wiberg} is significantly faster, 
as it solves a linear system with only $r\min(m,n)$ variables, compared to $r(m+n)$ for \texttt{R2RILS}. 
On the Din dataset,  since the ratio $n/m\approx 4.4$, the mean runtime of the normalized variant is only slightly slower than that of \texttt{damped Wiberg}, 17 seconds versus 11 seconds.
However, on the UB4 dataset, where $n/m=16$, the clock time %till convergence 
of \texttt{damped Wiberg}
is significantly faster by a factor of about 60. Developing a variant of \texttt{R2RILS} that solves only for the smaller dimension 
and would thus be much faster, is an interesting topic for future research.  

\begin{figure}[t]
\centering
\includegraphics[width=.4\textwidth]{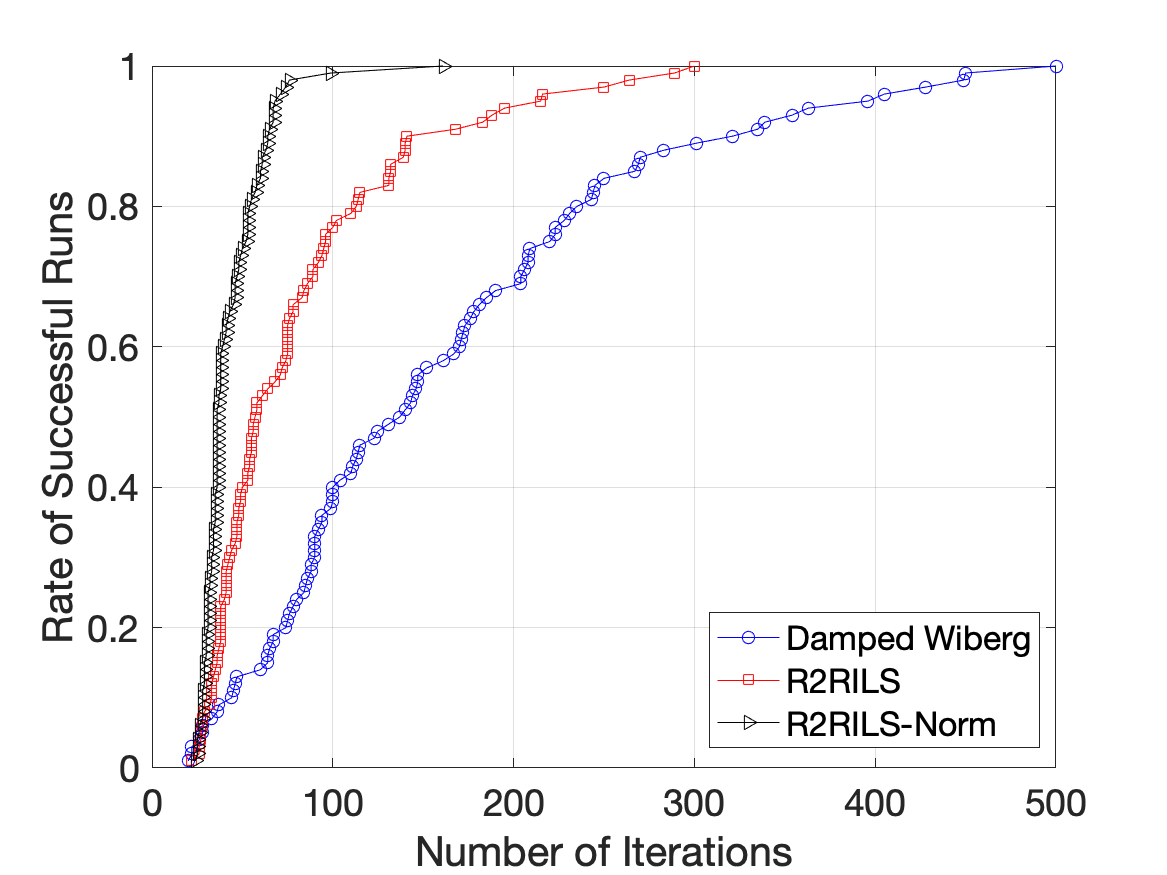}\
\includegraphics[width=.4\textwidth]{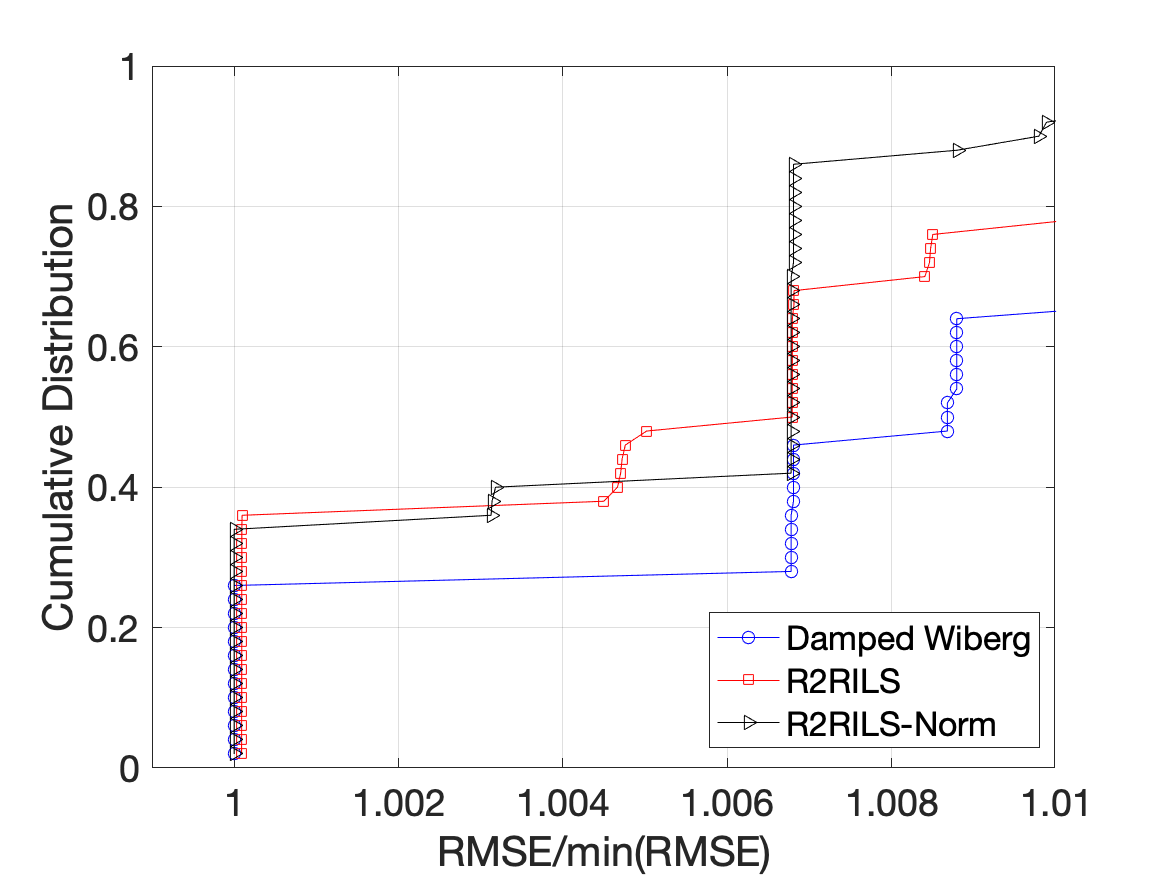}
\caption{(Left) Cumulative distribution for number of iterations to converge to optimal solution on Din dataset. (Right) Cumulative Distribution of 
the observed RMSE per entry, divided by the minimal RMSE,  on the UB4 dataset, from 50 different random initializations. }
\label{fig:CV}
\end{figure}

%%%%%%%%%%%%%%%%%%%%%%%%%%%%%%%%%%%%%%%%%%%%%%%%%%%%%%%%%%%%%%%%%
\section{Theoretical Analysis}
\label{sec:theory}

We present a preliminary theoretical analysis, which provides both motivation and insight into the two steps of \texttt{R2RILS}. First, Lemma \ref{lemma:rank_deficiency} shows that the least squares problem (\ref{eq:ILS_update_rule_step1}) has rank deficiency of dimension at least \(r^{2}\), similar to Wiberg's algorithm \citep[Proposition 1]{okatani2007wiberg}.
Next, even though \texttt{R2RILS} lifts to rank $2r$ matrices, 
Lemma \ref{lemma:critical_point} shows that if it converges, the limiting solution is 
in fact of rank $r$. 
Hence, 
%An immediate consequence, presented in Corollary~\ref{corollary:convergence_to_X0}, is that 
if this solution attains a value of zero for the objective \eqref{eq:ILS_update_rule_step1}, then \texttt{R2RILS} outputs the true underlying matrix $X_0$.
Finally, we study the convergence of \texttt{R2RILS} in the simple rank-1 case. Assuming that the entire matrix is observed, we prove in Theorem \ref{thm:non_asymptotic_convergence} that starting from any initial guess weakly correlated to the true singular vectors, \texttt{R2RILS} converges linearly to the underlying matrix. 
Its proof motivates the averaging step of \texttt{R2RILS}, as it shows that in the rank-1 case, the errors of $(\tilde U_t,\tilde V_t)$ relative to the
true singular vectors are approximately in the {\em opposite} direction compared to the errors in $(U_t,V_t)$. Using this property, we show in Theorem \ref{thm:quadratic_convergence} that locally, the convergence of \texttt{R2RILS} is quadratic.
	Remarks~\ref{rem:optimal_weighting} and \ref{rem:non_minimal_solution} provide insight and theoretical justification 
	for the equal weighting in the averaging step \eqref{eq:ILS_update_rule_step2}, as well as the choice of the minimal norm solution to the least squares problem \eqref{eq:ILS_update_rule_step1}. Finally, Theorem~\ref{thm:noisy} shows that \texttt{R2RILS} is stable to additive noise,  in the fully observed rank-$1$ case.

\begin{lemma}
\label{lemma:rank_deficiency}
Suppose that the \(r\) columns of \(U_{t}\) and of \(V_t\) are both linearly independent. Then, the solution space of Eq.~(\ref{eq:ILS_update_rule_step1}) has dimension at least $r^{2}$.
In addition, in the rank-$1$ case, when $\Omega = [m] \times [n]$ the solution space has dimension exactly $1$.
\end{lemma}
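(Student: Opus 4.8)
The plan is to recognize that the objective in \eqref{eq:ILS_update_rule_step1} is a \emph{linear} least squares problem in the unknown pair $(A,B)\in\mathbb{R}^{m\times r}\times\mathbb{R}^{n\times r}$. I would define the linear operator $L(A,B) = \left(U_t B^\top + A V_t^\top\right)\big|_{\Omega}$, the restriction to the observed entries. The set of minimizers of any linear least squares problem $\min_z\|Lz-b\|$ is an affine translate $z_0+\ker L$ of the kernel: the normal equations read $L^\top L z = L^\top b$, this system is consistent, and $\ker(L^\top L)=\ker L$ over the reals. Hence the dimension of the solution space of \eqref{eq:ILS_update_rule_step1} equals $\dim\ker L$, and it suffices to analyze this kernel.

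For the lower bound, I would exhibit an explicit $r^2$-dimensional subspace of $\ker L$ that is present for \emph{every} choice of $\Omega$. For each matrix $C\in\mathbb{R}^{r\times r}$, set $A=U_tC$ and $B=-V_tC^\top$; then $U_t B^\top + A V_t^\top = -U_t C V_t^\top + U_t C V_t^\top = 0$ as a full $m\times n$ matrix, so in particular it vanishes on $\Omega$ and the pair lies in $\ker L$. The linear map $C\mapsto(U_tC,-V_tC^\top)$ is injective: if $U_tC=0$, the linear independence of the columns of $U_t$ (i.e.\ full column rank) forces each column of $C$ to vanish, so $C=0$. Its image therefore has dimension $r^2$, giving $\dim\ker L\ge r^2$.

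For the exact count in the rank-$1$, fully observed case, I set $r=1$, write $U_t=u$, $V_t=v$ (nonzero vectors), and $\Omega=[m]\times[n]$, so that $L(a,b)=ub^\top+av^\top$ \emph{as a full matrix}. The task is to show the kernel is no larger than the one-dimensional space already found. Suppose $ub^\top+av^\top=0$. Multiplying on the right by $v$ yields $u(b^\top v)+a\|v\|^2=0$, and since $\|v\|^2\ne 0$ this forces $a=c\,u$ with $c=-(b^\top v)/\|v\|^2$. Substituting back gives $u(b^\top + c v^\top)=0$, and since $u\ne 0$ we obtain $b=-c\,v$. Hence every kernel element has the form $(cu,-cv)$, so $\dim\ker L = 1 = r^2$.

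The linear-algebra identities are routine; the one point requiring care is that the \emph{lower} bound holds for arbitrary $\Omega$, whereas the \emph{exact} value is claimed only under full observation. The gap is precisely the possibility of ``spurious'' kernel elements $(A,B)$ for which $U_t B^\top + A V_t^\top$ vanishes on $\Omega$ but not identically. These are excluded in the rank-$1$ fully observed case because $\Omega=[m]\times[n]$ makes vanishing on $\Omega$ equivalent to vanishing everywhere, and the rank-$1$ matrix-equality argument above then pins down the kernel exactly.
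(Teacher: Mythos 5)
Your proof is correct and follows essentially the same route as the paper's: the same $r^2$-dimensional kernel family (your $A=U_tC$, $B=-V_tC^\top$ is the matrix form of the paper's $a_i=\sum_j \lambda_{i,j}(U_t)_j$, $b_i=-\sum_j \lambda_{j,i}(V_t)_j$), and the same pinning-down of the rank-$1$ fully observed kernel to the line $\{(c\,u_t,\,-c\,v_t)\}$, with your right-multiplication by $v_t$ replacing the paper's coordinate-wise argument. If anything, you are slightly more explicit than the paper in justifying that the minimizer set is an affine translate of $\ker L$ and in verifying injectivity of $C\mapsto(U_tC,-V_tC^\top)$, which is precisely where the linear-independence hypothesis enters.
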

\begin{proof}
The solution of (\ref{eq:ILS_update_rule_step1}) is unique up to the kernel of the linear map
\begin{align} \label{eq:leastSquares_linearMap}
(A,B) \mapsto \Vectorize_{\Omega}\left(U_{t}B^{\top} + AV_{t}^{\top}\right) = \Vectorize_{\Omega}\Big(\sum_{i=1}^{r} (U_{t})_i b_i^{\top} + a_i (V_{t})_i^{\top}\Big),
\end{align}
where $(U_t)_i$ denotes the $i$-th column of $U$ and $\Vectorize_{\Omega}(B)\in\mathbb{R}^{|\Omega|}$ is a vector with entries $B_{i,j}$ for $(i,j) \in \Omega$. 
Choosing $a_i = \sum_{j=1}^{r} \lambda_{i,j} (U_{t})_j$ and $b_i = - \sum_{j=1}^{r} \lambda_{j,i} (V_t)_j$ with  $r^{2}$ free parameters $\lambda_{i,j}$ yields an element of the kernel. Hence,  the dimension of the kernel is at least $r^{2}$.

As for the second part of the lemma, suppose that $(a,b)$ is a non-trivial solution, such that
\begin{equation}
	\label{eq:rank_1_dim_defficiency}
u_{t}b^{\top} + av_{t}^{\top} = 0.
\end{equation}
Then $\exists i$ such that $b_{i} \neq 0$ and by Eq.~(\ref{eq:rank_1_dim_defficiency})
$b_{i}u_{t} = -(v_{t})_{i} a$, 
implying that $a \in \Span\{u_{t}\}$. A similar argument shows that $b \in \Span\{v_{t}\}$. 
Hence the two terms in Eq.~(\ref{eq:rank_1_dim_defficiency}) take the form $u_tb^\top=\lambda_1 u_tv_t^\top$ and
$av_t^\top=\lambda_2 u_tv_t^\top$ for some $\lambda_1,\lambda_2\in\mathbb{R}$. For Eq.~(\ref{eq:rank_1_dim_defficiency}) to hold, $\lambda_1=-\lambda_2$.
Thus, any non trivial solution belongs to the rank-1 subspace $\lambda(u_t,-v_t)$. 
\end{proof}
%
%Hence Eq.~(\ref{eq:rank_1_dim_defficiency}) can be written as
%\[\lambda_{1} u_{t}v_{t}^{\top} + \lambda_{2} u_{t}v_{t}^{\top} = 0\]
%for some non-zero scalars $\lambda_1, \lambda_2$. Thus $\lambda_{1} = - \lambda_{2}$, which implies that any non trivial solution belongs to the rank-1 subspace $\lambda( u_{t}, - v_{t})$. 
%\end{proof}

%%%%
\begin{lemma}
\label{lemma:critical_point}
Let $\mathcal{M}_{r}$ be the manifold of $m\times n$ rank $r$ matrices.
Denote by $L: \mathcal{M}_{r} \rightarrow \mathbb{R}$ the squared error loss on the observed entries, 
\[L(Z) = \| Z - X\|_{F(\Omega)}^{2}.
\]
Suppose that $(U_t, V_t)$ is a fixed point of \texttt{R2RILS}, and that the $r$ columns of $U_t$ and of $V_t$ are both linearly independent. Then the rank $2r$ matrix $\hat X_t$ is %in fact of 
rank $r$ and% , moreover, it 
is a critical point of $L$.
\end{lemma}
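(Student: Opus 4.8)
The plan is to exploit two facts: the fixed-point condition forces $\tilde U_t$ and $\tilde V_t$ to be column-wise parallel to $U_t$ and $V_t$, and the subspace over which Step~I optimizes is precisely the tangent space to $\mathcal M_r$ at $\hat X_t$.

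First I would unpack the fixed-point condition one column at a time. Since $\ColNorm$ acts independently on each column and returns unit-norm columns, a fixed point means that for every column $i$, writing $u_i=(U_t)_i$ (of unit norm) and $q_i=(\tilde U_t)_i/\|(\tilde U_t)_i\|$, one has $u_i=(u_i+q_i)/\|u_i+q_i\|$. Rearranging gives $q_i=(\|u_i+q_i\|-1)\,u_i$, and taking norms yields $\bigl|\,\|u_i+q_i\|-1\,\bigr|=1$, so $\|u_i+q_i\|\in\{0,2\}$; the value $0$ is excluded as it makes the update undefined, hence $\|u_i+q_i\|=2$ and $q_i=u_i$. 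Therefore $(\tilde U_t)_i=\alpha_i u_i$ with $\alpha_i=\|(\tilde U_t)_i\|>0$, i.e.\ $\tilde U_t=U_t\operatorname{diag}(\alpha)$, and by the identical argument $\tilde V_t=V_t\operatorname{diag}(\beta)$ with all $\alpha_i,\beta_i>0$.

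Substituting into Eq.~(\ref{eq:X_t_2r}) collapses the two rank-$r$ terms: $\hat X_t=U_t\tilde V_t^\top+\tilde U_t V_t^\top=U_t\bigl(\operatorname{diag}(\alpha)+\operatorname{diag}(\beta)\bigr)V_t^\top$. The inner diagonal matrix has strictly positive entries, hence is invertible, and since the columns of $U_t$ and of $V_t$ are linearly independent this shows $\hat X_t$ has rank exactly $r$, with column space $\Col(U_t)$ and row space $\Col(V_t)$. For the critical-point claim I would then invoke the tangent-space interpretation noted in Section~\ref{sec:algorithm}: the tangent space to $\mathcal M_r$ at a rank-$r$ matrix with these column and row spaces is exactly $T=\{U_tB^\top+AV_t^\top:A\in\mathbb R^{m\times r},\,B\in\mathbb R^{n\times r}\}$, which is precisely the subspace optimized in Eq.~(\ref{eq:ILS_update_rule_step1}). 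Because $\hat X_t$ attains $\min_{Y\in T}\|Y-X\|_{F(\Omega)}^2$, the first-order (normal) equations give $\langle P_\Omega(\hat X_t-X),\,Y\rangle_F=0$ for every $Y\in T$, where $P_\Omega$ denotes the self-adjoint idempotent that zeros entries off $\Omega$. Since $P_\Omega(\hat X_t-X)$ is exactly half the Euclidean gradient $\nabla L(\hat X_t)=2P_\Omega(\hat X_t-X)$, its Frobenius-orthogonal projection onto $T=T_{\hat X_t}\mathcal M_r$, namely the Riemannian gradient, vanishes, so $\hat X_t$ is a critical point of $L$.

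The bookkeeping is routine; the one point requiring care is the identification in the last step, namely that Frobenius-orthogonality of the $\Omega$-restricted least-squares residual to the Step~I search subspace is the same statement as the vanishing of the Riemannian gradient of $L$ on $\mathcal M_r$. This hinges on the tangent space at $\hat X_t$ being \emph{literally} the Step~I search space, which in turn relies on the rank-$r$ collapse established above. I would also note that although Eq.~(\ref{eq:ILS_update_rule_step1}) is rank deficient by Lemma~\ref{lemma:rank_deficiency}, the minimal-norm selection is immaterial for the critical-point conclusion, since only $P_\Omega(\hat X_t-X)$ enters the stationarity condition and this residual is common to all minimizers.
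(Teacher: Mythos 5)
Your proof is correct and follows essentially the same route as the paper's: the fixed-point condition forces $\tilde U_t = U_t\Sigma_U$ and $\tilde V_t = V_t\Sigma_V$ with diagonal factors, so $\hat X_t = U_t(\Sigma_U+\Sigma_V)V_t^\top$ collapses to rank $r$, and global optimality of Step I over the subspace $\{U_tB^\top + AV_t^\top\}$, identified with $T_{\hat X_t}\mathcal{M}_r$, forces the residual $P_\Omega(\hat X_t - X)$ to be Frobenius-orthogonal to that tangent space (the paper phrases this orthogonality as a proof by contradiction, while you state it directly via the normal equations — the same fact). A small bonus of your version is the explicit column-wise derivation showing the diagonal entries are strictly positive, which justifies the invertibility of $\Sigma_U+\Sigma_V$ and hence that $\hat X_t$ has rank exactly $r$, a point the paper's proof asserts without detail.
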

%%%%%%
\begin{proof}
Suppose $(U_{t}, V_{t})$ is a fixed point of \texttt{R2RILS}. Then the solution $(\tilde U_t,\tilde V_t)$ of Eq.~(\ref{eq:ILS_update_rule_step1}) 
can be written as follows, with diagonal matrices $\Sigma_{U}, \Sigma_{V} \in \mathbb{R}^{r \times r}$, 
\[(\tilde{U}_{t}, \tilde{V}_{t}) = (U_{t}\Sigma_{U}, V_{t}\Sigma_{V} )\] 
This implies that \texttt{R2RILS}'s interim rank \(2r\) estimate \(\hat X_t\) of Eq.~(\ref{eq:X_t_2r}) is in fact of rank \(r\), since 
\[\hat{X}_{t} = U_{t}\Sigma_{V}^{\top} V_{t}^{\top} + U_{t} \Sigma_{U} V_{t}^{\top} = U_{t} (\Sigma_{U}  + \Sigma_{V}^{\top}) V_{t}^{\top} .\]
The rank $r$ matrix $\hat X_t$ is a critical point of the function \(L\) on the manifold $\mathcal M_r$ if and only if its gradient $\nabla L$ is orthogonal to the tangent space at $\hat X_t$, denoted \(T_{\hat X_t}\mathcal M_r\). 
Suppose by contradiction that  $\hat X_t$ is not a critical point of $L$ on  $\mathcal{M}_{r}$. Equivalently, 
\begin{equation}
\label{eq:non_ortho_gradient}
\nabla L(\hat{X}_{t}) \centernot{\bot} T_{\hat{X}_t}\mathcal{M}_r .
\end{equation}
The gradient of the loss $L$ at a point $Z$ is given by
%\begin{equation*}
$
\nabla L (Z) = 2\left(P_{\Omega}(Z) - X\right)
$
%\end{equation*}
where $P_{\Omega}$ is the projection operator onto the observed entries in the set $\Omega$, 
\[\left(P_{\Omega}(Z)\right)_{i,j} = \begin{cases}
Z_{i,j}, \quad &\mbox{if } (i,j) \in \Omega,\\
0,   &\mbox{otherwise}.
\end{cases}\]
%[32] example 8.4 from Vandereycken for tangent space proof
The tangent space at a point $Z = U \Sigma V^{\top} \in \mathcal{M}_{r}$, where $\Sigma$ is an invertible $r\times r$ matrix,  is given by \citep[Proposition 2.1]{vandereycken2013low}
\begin{align*}
T_{Z}\mathcal{M}_r = \left\{ 
UB^{\top} + AV^{\top}  \; | \;  A \in \mathbb{R}^{m \times r}, B \in \mathbb{R}^{n \times r}
\right\}.
\end{align*}
Eq.~(\ref{eq:non_ortho_gradient}) means that the projection  
 of $\nabla L(\hat{X}_t)$ onto the tangent space $T_{\hat{X}_t}\mathcal{M}$ is non trivial. 
Let $U_t B^\top + AV_t^\top$ be this projection.  Then
\[
\| P_{\Omega}(\hat{X}_{t}) - X  - U_{t} B^{\top} - AV_{t}^{\top}\|_{F} < \| P_{\Omega}(\hat{X}_{t}) -  X \|_{F}.
\]
Since $P_{\Omega}(\hat{X}_t)$ and $X$ both vanish on $\Omega^c$, the right hand side equals
$\| P_{\Omega}(\hat{X}_{t}) -  X \|_{F(\Omega)}$.
Thus, 
\begin{align*}
\| P_{\Omega}(\hat{X}_{t}) - X - U_t B^{\top} - AV_t^{\top}\|_{F(\Omega)}  &\leq \| P_{\Omega}(\hat{X}_{t}) - X - U_t B^{\top} - A V_t^{\top}\|_{F}
%\\
%&
< \| P_{\Omega}(\hat{X}_{t}) - X \|_{F(\Omega)}.
\end{align*}
This contradicts the assumption that $(\tilde{U}_{t}, \tilde{V}_{t})$ is a global minimizer of Eq.~(\ref{eq:ILS_update_rule_step1}). 
%since the above precisely shows there is a solution with a strictly lower objective.
\end{proof}

\begin{corollary}\label{corollary:convergence_to_X0}
Consider a noise-free matrix completion problem with an underlying matrix $X_0$ of rank $r$ and a set $\Omega \subset [m]\times [n]$ such that the solution to \eqref{eq:LRMC} is unique. If \texttt{R2RILS} converged to a fixed point $(U_t, V_t)$ with a zero value for the least squares objective \eqref{eq:ILS_update_rule_step1}, $\left\|U_t \tilde V_t^\top + \tilde U_t V_t^\top - X \right\|_{F(\Omega)} = 0$, then its output is the true underlying matrix $X_0$.
\end{corollary}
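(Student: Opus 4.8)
The plan is to reduce the statement to Lemma \ref{lemma:critical_point} together with the uniqueness hypothesis, so that essentially no new computation is required. First I would invoke Lemma \ref{lemma:critical_point}: since $(U_t,V_t)$ is a fixed point of \texttt{R2RILS} whose $r$ columns are linearly independent, the interim rank $2r$ estimate $\hat X_t = U_t\tilde V_t^\top + \tilde U_t V_t^\top$ is in fact of rank exactly $r$. Here I would flag that linear independence of the columns of $U_t$ and $V_t$ is the one regularity condition carried over from Lemma \ref{lemma:critical_point}; at a genuine fixed point this is the natural generic situation, and otherwise the claim is vacuous.

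Next I would exploit the zero-objective hypothesis. By assumption $\|\hat X_t - X\|_{F(\Omega)} = 0$, so $\hat X_t$ agrees with $X$ at every observed index. Because the observations are noise-free, $X_{ij} = (X_0)_{ij}$ for all $(i,j)\in\Omega$, and hence $\hat X_t$ and $X_0$ coincide on $\Omega$, that is $\|\hat X_t - X_0\|_{F(\Omega)} = 0$. Combined with the previous step, $\hat X_t$ is a matrix of rank $r$ that is feasible for \eqref{eq:LRMC} and attains objective value zero, hence it is a global minimizer of \eqref{eq:LRMC}.

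The matrix $X_0$ is itself of rank $r$ and attains objective value zero in the noise-free problem, so it too is a global minimizer. By the uniqueness assumption on the solution of \eqref{eq:LRMC}, these two minimizers must coincide, giving $\hat X_t = X_0$. Finally, since $\hat X_t$ already has rank $r$, its best rank-$r$ approximation is $\hat X_t$ itself; moreover its squared error on the observed entries is zero, which is the minimal possible value, so this is the iterate selected by the output rule of Algorithm \ref{Alg:R2RILS}. Therefore the returned matrix equals $\hat X_t = X_0$, as claimed.

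I expect the only delicate points here to be bookkeeping rather than mathematics: confirming that the linear-independence hypothesis needed to apply Lemma \ref{lemma:critical_point} is indeed in force at the fixed point, and checking that the selection rule of the algorithm (minimal observed squared error among the rank-$r$ projections of the $\hat X_t$'s) does return precisely this iterate. Both follow immediately once the zero-error condition is translated into feasibility-plus-optimality for \eqref{eq:LRMC}, so the substance of the argument is entirely inherited from Lemma \ref{lemma:critical_point}.
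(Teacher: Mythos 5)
Your argument is correct and follows the same route as the paper's own proof: apply Lemma~\ref{lemma:critical_point} to conclude that $\hat X_t$ has rank $r$, then use the zero observed error together with uniqueness of the solution to \eqref{eq:LRMC} to identify it with $X_0$. Your additional bookkeeping — flagging the linear-independence hypothesis inherited from Lemma~\ref{lemma:critical_point} and verifying that the algorithm's output rule indeed returns this iterate — is sound and merely makes explicit what the paper's two-sentence proof leaves implicit.
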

\begin{proof}
By Lemma~\ref{lemma:critical_point}, if \texttt{R2RILS} converged then the intermediate matrix $U_t \tilde V_t^\top + \tilde U_t V_t^\top$ is in fact rank-$r$. As this matrix attains a zero value for the objective \eqref{eq:LRMC}, it equals $X_0$.
\end{proof}

%%%%%%%%%%%%%%

%%%%%
Next, we study the convergence of \texttt{R2RILS}, in the simple case where 
\(X_{0}=\sigma u v^\top\) is of rank-1, and assume that we have observed {\em all} entries of $X_0$.
These two assumptions allow us to find a closed form solution to the least squares problem and are critical for our proof analysis. It may be possible to extend our proof to higher rank settings and to partially observed matrices with a more complicated proof. We leave this for future work. 

Let $(u_t,v_t)$ be the estimates of \texttt{R2R2ILS} at iteration $t$. 
In the rank-$1$ case $u_t$ and $v_t$ are vectors and we may decompose each of them into two components. The first is their projection on the true $(u, v)$, and the second is the orthogonal complement which is their error, 
\begin{equation}
	\label{eq:u1_v1}
u_t = \alpha_t u + \sqrt{1-\alpha_t^2} e_{u,t}  \; , \; v_t =\beta_t v + \sqrt{1-\beta_t^2} e_{v,t}.
\end{equation}
For future use we define  $\epsilon_t = \sqrt{1-\alpha_t^2}$, $\delta_t = \sqrt{1-\beta_t^2}$, and the following two quantities
\begin{equation}
h(\epsilon) = \sqrt{1+2\epsilon^2-3\epsilon^4},\quad \quad r(\epsilon) = \frac{1+\epsilon^2+h(\epsilon)}{\sqrt{2(1+3\epsilon^2+h(\epsilon))}} .
	\label{eq:h_r}
\end{equation}
Note that both $(u,v)$ and hence also $(\alpha_t,\beta_t)$ are determined up a joint $\pm 1$ sign. 
%Since $(u,v))$ are determined up to a joint $\pm1$ sign, then 
% $\alpha_t$ and $\beta_t$ are determined only up to a joint sign inversion. 
%Note that $\alpha_t$ and $\beta_t$ are determined only up to a joint sign inversion. The reason is that 
%the singular vectors $u$ and $v$ are defined only up to a joint sign inversion, $X_0 = u v^\top = (-u)(-v)^\top$.
In the following, we will assume that the initial guess satisfies 
%$(\alpha_1,)
%\texttt{R2RILS} is initialized such that 
$\text{sign}(\alpha_1) = \text{sign}(\beta_1)>0$.

The first theorem below shows that in the fully-observed rank-1 case, from any initial guess weakly correlated to the true singular vectors ($\alpha_1,\beta_1>0$),  \texttt{R2RILS} converges linearly to the matrix $X_0$. The second theorem shows that asymptotically, the convergence is quadratic.

\begin{theorem}
\label{thm:non_asymptotic_convergence}
Assume that $\Omega = [m]\times[n]$, and that the initial guess $(u_1,v_1)$ satisfies $\alpha_1,\beta_1>0$.
Then the sequence of estimates $(u_{t}, v_{t})$ generated by \texttt{R2RILS} converges to $(u, v)$ linearly with a contraction factor smaller than $\sqrt{1-\frac{1}{\sqrt{2}}} \approx 0.54$. Specifically, 
\begin{align}\label{eq:non_asymptotic_convergence}
\left\| \begin{pmatrix} u_{t+1} \\  v_{t+1} \end{pmatrix} - \begin{pmatrix} u \\  v \end{pmatrix} \right\| = R(\epsilon_t, \delta_t) \left\| \begin{pmatrix} u_t \\  v_t \end{pmatrix} - \begin{pmatrix} u \\  v \end{pmatrix} \right\|
\end{align}
where 
$$
R(\epsilon_t, \delta_t) = \left[\frac{2 - r(\epsilon_t) - r(\delta_t)}{2 - \sqrt{1-\epsilon_t^2} - \sqrt{1-\delta_t^2}}\right]^\frac{1}{2} 
\leq \sqrt{1-\frac{1}{\sqrt{2}}} \max\{\epsilon_t, \delta_t\}, 
$$
and the function $r(\epsilon)$ was defined in Eq.~(\ref{eq:h_r}) above. 
%$r(\epsilon_t) = \frac{1+\epsilon_t^2+\tilde{\epsilon_t}^2}{\sqrt{2\left(1+3\epsilon_t^2+\tilde{\epsilon_t}^2\right)}}
%, \\$\tilde{\epsilon_t}^2 = \sqrt{1+2\epsilon_t^2-3\epsilon_t^4}$ (with a similar definition for $\tilde{\delta_t}$ w.r.t.~$\delta_t$), and $\epsilon_t = \sqrt{1 - (u^\top u_t)^2}$ (with a similar definition for $\delta_t$ w.r.t.~$v$ and $v_t$).
\end{theorem}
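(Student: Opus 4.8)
Since $\Omega=[m]\times[n]$, the objective in \eqref{eq:ILS_update_rule_step1} is the \emph{full} Frobenius norm, so the matrix $u_t\tilde v_t^\top+\tilde u_t v_t^\top$ is simply the orthogonal projection of $X_0=\sigma uv^\top$ onto the tangent space $T=\{u_tb^\top+av_t^\top\}$, and $(\tilde u_t,\tilde v_t)$ is the minimal-norm representation of that projection. My first step is therefore to compute this projection in closed form using $P_T(Z)=P_{u_t}Z+ZP_{v_t}-P_{u_t}ZP_{v_t}$ together with the decomposition \eqref{eq:u1_v1}; a short calculation (using $\langle u_t,u\rangle=\alpha_t$, $\langle v_t,v\rangle=\beta_t$) gives $P_T(\sigma uv^\top)=\sigma\alpha_t u_tv^\top+\sigma\beta_t uv_t^\top-\sigma\alpha_t\beta_t u_tv_t^\top$. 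To select the minimal-norm preimage I would invoke the $1$-dimensional kernel from Lemma \ref{lemma:rank_deficiency}, spanned by $(u_t,-v_t)$, so that all representations are $(a,b)=(a_0+\lambda u_t,\,b_0-\lambda v_t)$; minimizing the scalar quadratic $\lambda\mapsto\|a\|^2+\|b\|^2$ yields the closed form $\tilde u_t=\sigma\beta_t\bigl(u-\tfrac{\alpha_t}{2}u_t\bigr)$ and $\tilde v_t=\sigma\alpha_t\bigl(v-\tfrac{\beta_t}{2}v_t\bigr)$.

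Next I would carry the two vectors through Step II. Because $\sigma>0$ and (by the hypothesis $\alpha_1,\beta_1>0$, to be propagated below) $\beta_t>0$, the positive scalar $\sigma\beta_t$ cancels under $\ColNorm$, so $\ColNorm(\tilde u_t)$ is the unit vector along $u-\tfrac{\alpha_t}{2}u_t$, whose norm equals $\tfrac12\sqrt{1+3\epsilon_t^2}$. Crucially this depends only on $\epsilon_t$, so the $u$- and $v$-updates decouple. Writing $s=\sqrt{1+3\epsilon_t^2}$, one finds $\|u_t+\ColNorm(\tilde u_t)\|^2=2(1+\alpha_t/s)$ and hence $\langle u_{t+1},u\rangle=\frac{\alpha_t s+2-\alpha_t^2}{\sqrt{2s(s+\alpha_t)}}$. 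Substituting $\alpha_t^2=1-\epsilon_t^2$ and the identity $s\alpha_t=\sqrt{(1+3\epsilon_t^2)(1-\epsilon_t^2)}=h(\epsilon_t)$ collapses this expression to exactly $r(\epsilon_t)$ from \eqref{eq:h_r}. Thus $\alpha_{t+1}=r(\epsilon_t)$, and by the identical argument $\beta_{t+1}=r(\delta_t)$.

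Translating to errors is then immediate: since all four vectors are unit, $\|u_{t+1}-u\|^2=2-2r(\epsilon_t)$ and $\|u_t-u\|^2=2-2\sqrt{1-\epsilon_t^2}$, with the analogous identities for $v$. Summing the two blocks yields precisely the equality \eqref{eq:non_asymptotic_convergence} with $R(\epsilon_t,\delta_t)=\left[\frac{2-r(\epsilon_t)-r(\delta_t)}{2-\sqrt{1-\epsilon_t^2}-\sqrt{1-\delta_t^2}}\right]^{1/2}$.

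It remains to establish the upper bound and to conclude convergence. For the bound I would apply the mediant inequality $\frac{A_1+A_2}{B_1+B_2}\le\max\bigl\{\frac{A_1}{B_1},\frac{A_2}{B_2}\bigr\}$ with $A_i=1-r(\cdot)$ and $B_i=1-\sqrt{1-(\cdot)^2}>0$, which reduces the claim to the single-variable inequality $\frac{1-r(\epsilon)}{1-\sqrt{1-\epsilon^2}}\le\bigl(1-\tfrac{1}{\sqrt2}\bigr)\epsilon^2$ on $[0,1]$. This one-variable estimate is the crux and I expect it to be the main obstacle: it is tight at $\epsilon=1$ (where $r(1)=1/\sqrt2$) yet very slack as $\epsilon\to0$ (there $1-r(\epsilon)=O(\epsilon^6)$), and $r$ carries nested square roots. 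My plan is to substitute $\alpha=\sqrt{1-\epsilon^2}$ (so $s=\sqrt{4-3\alpha^2}$), clear all denominators, and reduce to a polynomial inequality in $\alpha\in[0,1]$ provable by a monotonicity or elementary estimate. Finally, since $r>0$ throughout $[0,1]$ we get $\alpha_{t+1}=r(\epsilon_t)>0$ and $\beta_{t+1}=r(\delta_t)>0$, so the sign condition propagates by induction and the closed-form derivation stays valid at every iteration; the bound then gives $R\le\sqrt{1-1/\sqrt2}<1$ uniformly, forcing geometric contraction of the error and hence $(u_t,v_t)\to(u,v)$.
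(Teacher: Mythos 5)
Your proposal is correct, reaches exactly the paper's formulas, and arrives at the key closed form by a genuinely different route. The paper obtains the minimal-norm solution via Lemma~\ref{lem:minNormSol_generalX}, proved in Appendix~\ref{sec:appendix_proofOfMinNormSolLemma} by writing out the Moore--Penrose pseudoinverse of the least-squares matrix entrywise (Lemma~\ref{lem:pseudoinverse}) and verifying the four Penrose conditions. You instead observe that for $\Omega=[m]\times[n]$ the feasible set of Step I is the tangent space, apply the projection formula $P_T(Z)=P_{u_t}Z+ZP_{v_t}-P_{u_t}ZP_{v_t}$, and select the minimal-norm preimage by minimizing a scalar quadratic along the one-dimensional kernel $(u_t,-v_t)$ guaranteed by Lemma~\ref{lemma:rank_deficiency}; this correctly yields $\tilde u_t=\sigma\beta_t\bigl(u-\tfrac{\alpha_t}{2}u_t\bigr)$, $\tilde v_t=\sigma\alpha_t\bigl(v-\tfrac{\beta_t}{2}v_t\bigr)$, i.e.\ Eq.~\eqref{eq:minNormSol_rank1X}, with the $-\tfrac12\alpha_t u_t$ correction transparently identified as the kernel shift $\lambda=-\sigma\alpha_t\beta_t/2$. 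Your derivation is shorter and coordinate-free; the paper's pseudoinverse computation is heavier but proves the lemma for \emph{arbitrary} $X$, which is what the noisy analysis of Theorem~\ref{thm:noisy} later relies on (there $X=\sigma uv^\top+Z$ is not rank one). From the closed form onward your algebra coincides with the paper's: your $s=\sqrt{1+3\epsilon_t^2}$ is $2\tilde\alpha_t$, the identity $\alpha_t s=h(\epsilon_t)$ gives $\alpha_{t+1}=r(\epsilon_t)$ and $\beta_{t+1}=r(\delta_t)$, the error identity $E_{t+1}/E_t=R(\epsilon_t,\delta_t)$ follows as in the paper, and your sign propagation via $r>0$ on $[0,1]$ is equivalent to the paper's observation that $\tilde\alpha_t-\alpha_t/2\geq 0$.

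One caveat, which you flag yourself: you do not actually prove the scalar inequality $\frac{1-r(\epsilon)}{1-\sqrt{1-\epsilon^2}}\leq\bigl(1-\tfrac{1}{\sqrt2}\bigr)\epsilon^2$ on $[0,1]$, only a plan for it. This does not put you behind the paper: the paper's written proof also stops at the equality $E_{t+1}/E_t=R(\epsilon_t,\delta_t)$ and the sign induction, and never verifies the stated bound $R\leq\sqrt{1-1/\sqrt2}\,\max\{\epsilon_t,\delta_t\}$. Your mediant-inequality reduction to a single-variable estimate (correctly identified as tight at $\epsilon=1$, where $h(1)=0$ and $r(1)=1/\sqrt2$, and very slack near $\epsilon=0$, where $1-r(\epsilon)=\Theta(\epsilon^6)$) is in fact a step beyond the paper's text; to make either argument fully rigorous, that one calculus fact must still be checked.
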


\begin{theorem}
\label{thm:quadratic_convergence}
As $\epsilon_t, \delta_t \to 0$, the convergence is quadratic, with an asymptotic contraction factor of $\sqrt{\epsilon_t^4 - \epsilon_t^2 \delta_t^2 + \delta_t^4} \leq \sqrt{2} \max\{\epsilon_t^2, \delta_t^2\}$.
\end{theorem}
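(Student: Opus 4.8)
The proof starts from the exact per-iteration identity already established in Theorem~\ref{thm:non_asymptotic_convergence}, namely $\|e_{t+1}\| = R(\epsilon_t,\delta_t)\,\|e_t\|$ where $e_t = (u_t,v_t)-(u,v)$ and
\[
R(\epsilon_t,\delta_t)^2 = \frac{2 - r(\epsilon_t) - r(\delta_t)}{\,2 - \sqrt{1-\epsilon_t^2} - \sqrt{1-\delta_t^2}\,}.
\]
Thus the entire statement is an asymptotic analysis of this single explicit function as $\epsilon_t,\delta_t\to 0$; the qualitative quadratic convergence is already inherited from the global bound $R\le \sqrt{1-\frac{1}{\sqrt2}}\,\max\{\epsilon_t,\delta_t\}\le \sqrt{1-\frac{1}{\sqrt2}}\,\|e_t\|$ of the previous theorem, so the task here is to pin down the sharp leading-order value of the contraction factor.

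My plan is to expand numerator and denominator separately. The denominator is routine: $\sqrt{1-\epsilon^2} = 1 - \tfrac12\epsilon^2 + O(\epsilon^4)$ gives $2 - \sqrt{1-\epsilon_t^2} - \sqrt{1-\delta_t^2} = \tfrac12(\epsilon_t^2+\delta_t^2)\bigl(1+O(\epsilon_t^2+\delta_t^2)\bigr)$. The numerator is the crux. The key subcomputation is the Taylor expansion of $r(\epsilon)$, for which I would first rewrite the radicand of $h$ in the factored form $1+2\epsilon^2-3\epsilon^4 = (1+3\epsilon^2)(1-\epsilon^2)$ to organize the algebra, then expand $h(\epsilon) = 1 + \epsilon^2 - 2\epsilon^4 + 2\epsilon^6 + O(\epsilon^8)$ and substitute into $r(\epsilon)=\tfrac{1+\epsilon^2+h(\epsilon)}{\sqrt{2(1+3\epsilon^2+h(\epsilon))}}$. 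The remarkable feature, which I would verify by carrying every term through order $\epsilon^6$, is a triple cancellation: the constant, quadratic, and quartic orders of the numerator and denominator of $r$ coincide, so that $r(\epsilon) = 1 - \tfrac12\epsilon^6 + O(\epsilon^8)$ and hence $2 - r(\epsilon_t) - r(\delta_t) = \tfrac12(\epsilon_t^6+\delta_t^6) + O(\epsilon_t^8+\delta_t^8)$.

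Combining the two expansions, the common factor $\tfrac12$ cancels and
\[
R(\epsilon_t,\delta_t)^2 = \frac{\epsilon_t^6+\delta_t^6}{\epsilon_t^2+\delta_t^2}\,\bigl(1+o(1)\bigr).
\]
The final simplification uses the identity $\epsilon^6+\delta^6 = (\epsilon^2+\delta^2)(\epsilon^4-\epsilon^2\delta^2+\delta^4)$ (the sum-of-cubes factorization in the variables $\epsilon^2,\delta^2$), which cancels the $\epsilon_t^2+\delta_t^2$ in the denominator and yields $R(\epsilon_t,\delta_t) \to \sqrt{\epsilon_t^4-\epsilon_t^2\delta_t^2+\delta_t^4}$. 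The stated bound then follows by discarding the negative term and using $\epsilon^4+\delta^4 \le 2\max\{\epsilon^2,\delta^2\}^2$, so that $\sqrt{\epsilon_t^4-\epsilon_t^2\delta_t^2+\delta_t^4}\le \sqrt2\,\max\{\epsilon_t^2,\delta_t^2\}$.

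I expect the main obstacle to be the sixth-order expansion of $r(\epsilon)$ and its delicate cancellations: because the numerator and denominator of $r$ agree through order $\epsilon^4$, one must expand $h$ accurately to order $\epsilon^6$ and keep track of the $O(\epsilon^6)$ remainders in both $1+\epsilon^2+h$ and $2(1+3\epsilon^2+h)$ without slips, since a single dropped term would corrupt the leading behavior. This high-order cancellation is precisely the analytic signature of the averaging step: equal weighting annihilates the leading error terms, which is why the surviving contraction factor is of order $\max\{\epsilon_t^2,\delta_t^2\}$ rather than $\max\{\epsilon_t,\delta_t\}$.
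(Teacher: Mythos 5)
Your proposal is correct and takes essentially the same route as the paper: expand $h(\epsilon)$ to order $\epsilon^6$ to obtain $r(\epsilon) = 1 - \tfrac12\epsilon^6 + \mathcal{O}(\epsilon^8)$, substitute into $R(\epsilon_t,\delta_t)$ together with the elementary expansion of the denominator, and simplify via the sum-of-cubes factorization $\epsilon^6+\delta^6 = (\epsilon^2+\delta^2)(\epsilon^4-\epsilon^2\delta^2+\delta^4)$. The paper's proof is just a terser version of this; your extra scaffolding (the factored radicand $(1+3\epsilon^2)(1-\epsilon^2)$ and the explicit tracking of the order-$\epsilon^4$ cancellation in $r$) merely makes the same computation explicit, and all your expansions check out.
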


\begin{remark} In the fully observed rank-1 case, it follows from the proof of Theorem \ref{thm:non_asymptotic_convergence} and can also be observed empirically that if the  initial guess $(u_1,v_1)$ is misaligned with the singular vectors $(u,v)$, namely $\alpha_1\cdot \beta_1<0$, then \texttt{R2RILS} fails to converge. However, this has little practical significance, since if only some entries are observed and/or the rank is higher then with enough observed entries, empirically \texttt{R2RILS} converges to the exact low rank matrix. 
\end{remark}

Next, we present two remarks that theoretically motivate the choice of the minimal norm solution and the averaging step of \texttt{R2RILS}, both in the fully observed rank-$1$ case.

\begin{remark}
\label{rem:optimal_weighting}
Consider a variant of \texttt{R2RILS}, where step II is replaced by
\begin{align}\begin{aligned}\label{eq:R2RILS_variant}
u_{t+1} = \ColNorm\left(u_t + w_u\cdot \ColNorm\left(\tilde{u}_t\right)\right), \,
v_{t+1} = \ColNorm\left(v_t + w_v\cdot \ColNorm\left(\tilde{v}_t\right)\right),
\end{aligned}\end{align}
with weights $w_u, w_v \in \mathbb{R}$ that possibly depend on $t$. In the case $\Omega = [m] \times [n]$ and $\alpha_1, \beta_1 > 0$, it is possible to converge to the exact rank-$1$ matrix $X_0$ after a single iteration, by setting
\begin{align} \begin{aligned} \label{eq:optimal_weights}
w_u = \left(u_t^\top \ColNorm\left(\tilde{u}_t\right)\right)^{-1},\quad
w_v = \left(v_t^\top \ColNorm\left(\tilde{v}_t\right)\right)^{-1}.
\end{aligned} 
\end{align}
Note that as $\epsilon_t, \delta_t \to 0$, namely as the current estimate becomes closer to $(u,v)$, then the optimal weights satisfy $w_u, w_v\to 1$, resulting in the original \texttt{R2RILS}. In other words, the averaging step of \texttt{R2RILS} is asymptotically optimal in the fully observed rank-1 case.
\end{remark}

\begin{remark}\label{rem:non_minimal_solution}
Assume $\Omega = [m]\times [n]$ and let $X = \sigma uv^\top$. By the proof of Lemma~\ref{lemma:critical_point}, in this case,  
the kernel of the linear map \eqref{eq:leastSquares_linearMap} is rank one, spanned by $(u_t,-v_t)$. 
Consider a variant where in step I,
instead of the minimal norm solution $(\tilde{u}_t,\tilde{v}_t)$, it takes a solution of the form
\[\label{eq:general_non_minimal_solution}
\begin{pmatrix} \hat{u}_t \\ \hat{v}_t \end{pmatrix} = \begin{pmatrix} \tilde{u}_t \\ \tilde{v}_t \end{pmatrix} + \lambda_t \begin{pmatrix} u_t \\ -v_t \end{pmatrix}
\]
where the scalar $\lambda_t$ may in general depend on $u_t,v_t$ and on $t$. 
Assume that  $(u_t,v_t)$ are aligned with $(u,v)$, namely $\alpha_t,\beta_t>0$. 
Then, the {\em only} choice that leads to quadratic convergence of the column or row space as $\epsilon_t\to 0$ or $\delta_t \to 0$ is the minimal norm solution, namely $\lambda_t=0$.
\end{remark}

Finally, the following theorem shows that in the presence of additive Gaussian noise, the estimates $(u_t, v_t)$ of \texttt{R2RILS} are close to the true vectors $(u,v)$ up to an error term linear in the noise level. 
A similar result, with different constants, also holds for  sub-Gaussian noise.
% by using a modified version of Lemma~\ref{lem:Zbar_maximalSV}.}

\begin{theorem}\label{thm:noisy}
Let $\Omega = [m]\times [n]$, and assume w.l.o.g. that $\rho \equiv \sqrt{\frac mn} \leq 1$. Let $X = \sigma u v^\top + Z$ where $\sigma>0$, $\|u\|=\|v\|=1$ and all entries $Z_{i,j}$ are i.i.d.~Gaussian with mean 0 and variance $\eta_0^2$.
Let $\delta \in \left(0, \frac 14 \right]$, and assume the initial guess of \texttt{R2RILS}, $(u_1, v_1)$, satisfies $\alpha_1, \beta_1 \geq \delta$.
Denote the normalized noise level $\eta \equiv \eta_0 \sqrt n$, and the constants $R \equiv \sqrt{1-\frac{1}{\sqrt 2}} \simeq 0.54$, $C \equiv \frac{50}{1-R}$. If %the noise to signal ratio satisfies
\begin{align}\label{eq:lowNoiseLevel_condition}
\frac{\eta}{\sigma} \leq \frac{\sqrt 2 }{C} \cdot \delta,
\end{align}
then with probability at least $1 - e^{-\frac{n}{2}}$, for all $t \geq 2$,
\begin{align}
\left\| \begin{pmatrix} u_t \\ v_t \end{pmatrix} - \begin{pmatrix} u \\ v \end{pmatrix} \right\| &\leq \sqrt 3 R^{t-2} + 4C \left(1 - R^{t-2}\right) \frac{\eta}{\sigma} . \label{eq:noisyEstimatesError}
\end{align}
Hence, after $\log(\eta/\sigma)$ iterations, the error is $O(\eta/\sigma)$.
%\begin{align*}
%\lim_{t\to\infty} \left\| \begin{pmatrix} u_t \\ v_t \end{pmatrix} - \begin{pmatrix} u \\ v \end{pmatrix} \right\| \leq 4C \frac{\eta}{\sigma} .
%\end{align*}
\end{theorem}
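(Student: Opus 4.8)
The plan is to treat the noisy iteration as a perturbation of the noise-free dynamics analyzed in Theorem \ref{thm:non_asymptotic_convergence}, and to control the perturbation through concentration of the Gaussian matrix $Z$. Since $\Omega = [m]\times[n]$, for fixed $(u_t,v_t)$ the least squares problem \eqref{eq:ILS_update_rule_step1} is an ordinary linear least squares problem whose minimal norm solution depends \emph{linearly} on the target $X$. Writing $X = \sigma uv^\top + Z$ and exploiting this linearity, I would decompose the solution as
$$\begin{pmatrix} \tilde u_t \\ \tilde v_t \end{pmatrix} = \begin{pmatrix} \tilde u_t^{\,\mathrm{sig}} \\ \tilde v_t^{\,\mathrm{sig}} \end{pmatrix} + \begin{pmatrix} \tilde u_t^{\,\mathrm{noise}} \\ \tilde v_t^{\,\mathrm{noise}} \end{pmatrix},$$
where the first term is exactly the solution driving the noise-free analysis and the second is the image of $Z$ under the (rank-deficient) pseudoinverse of the linear map \eqref{eq:leastSquares_linearMap}. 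The signal term contracts the error by the factor $R(\epsilon_t,\delta_t)\le R$ of Theorem \ref{thm:non_asymptotic_convergence}, so the whole task reduces to bounding the noise term and propagating both through the column normalizations of Step II.

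For the noise term I would use the closed form of the minimal norm solution of \eqref{eq:ILS_update_rule_step1} available in the fully observed rank-$1$ case (already exploited in the proof of Theorem \ref{thm:non_asymptotic_convergence} and in Remark \ref{rem:non_minimal_solution}). The relevant Gaussian functionals are $Zv_t$, $Z^\top u_t$ and $u_t^\top Z v_t$, all controlled by the operator norm $\|Z\|$. A standard Gaussian random-matrix bound gives $\|Z\|\le \eta_0(\sqrt m + 2\sqrt n)$ with probability at least $1-e^{-n/2}$; since $\rho=\sqrt{m/n}\le 1$ and $\eta=\eta_0\sqrt n$, this is $\|Z\|=O(\eta)$ on a single high-probability event valid for all iterations. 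Substituting into the closed form, the noise part contributes an additive error of order $\eta/\sigma$ to $(\tilde u_t,\tilde v_t)$, uniformly over unit vectors $(u_t,v_t)$.

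The next step is to combine contraction and additive noise into a scalar recursion. Writing $e_t = \|(u_t,v_t)-(u,v)\|$, I would show that the Step II averaging followed by $\ColNorm$ is non-expansive in the relevant regime, so that $e_{t+1}\le R\,e_t + D\,\eta/\sigma$ for an absolute constant $D$. The smallness condition \eqref{eq:lowNoiseLevel_condition}, $\eta/\sigma \le (\sqrt 2/C)\,\delta$, is then used to run an induction guaranteeing that $\alpha_t,\beta_t$ stay bounded away from zero, so that Theorem \ref{thm:non_asymptotic_convergence} keeps applying with the same factor $R$. With a crude bound $e_2\le \sqrt 3$ after the first averaging step, unrolling the geometric recursion from $t=2$ yields
$$e_t \le R^{t-2}e_2 + D\,\frac{\eta}{\sigma}\,\frac{1-R^{t-2}}{1-R},$$
which matches \eqref{eq:noisyEstimatesError} once the absolute constants are tracked and $D/(1-R)$ is absorbed into $4C$.

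The main obstacle I anticipate is the third step: controlling the interaction between the signal contraction and the noise term through the nonlinear $\ColNorm$ while simultaneously maintaining the inductive invariant. In the noise-free proof the contraction $R(\epsilon_t,\delta_t)$ degrades to $R$ only in the worst case and improves quadratically near the truth; with noise present one must prevent the additive perturbation from dragging $\alpha_t,\beta_t$ toward zero, which is exactly what \eqref{eq:lowNoiseLevel_condition} buys. Verifying that $\ColNorm$ does not amplify the $O(\eta/\sigma)$ perturbation, and that the iterate remains in the basin where Theorem \ref{thm:non_asymptotic_convergence} is valid, is the technical heart of the argument; the Gaussian concentration and the final geometric-series unrolling are comparatively routine.
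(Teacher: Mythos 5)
Your plan matches the paper's proof essentially step for step: the paper likewise exploits linearity of the minimal-norm solution to split $\tilde u_t$ into the noiseless part plus a Gaussian perturbation (bounded via $\sigma_1(\bar Z)\le 2+\rho\le 3$ on a single event of probability $1-e^{-n/2}$, exactly your $\|Z\|\le \eta_0(\sqrt m+2\sqrt n)$ bound), proves by induction that $\alpha_t,\beta_t\ge \tfrac14$ for $t\ge 2$ using condition \eqref{eq:lowNoiseLevel_condition}, carries the perturbation through the two normalizations via orthogonal decompositions (its Lemma~\ref{lem:nextNoisyEstimateDeviation_assumingAlphaBetaGetDelta}, the ``technical heart'' you correctly flagged), and then unrolls the recursion $E_{t+1}\le R\,E_t + 4C(1-R)\tfrac{\eta}{\sigma}$ from $E_2\le\sqrt 3$. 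Your proposal is correct in outline and takes the same route; the only work you deferred --- verifying that $\ColNorm$ does not amplify the $O(\eta/\sigma)$ perturbation --- is precisely what the paper's three-step deviation lemma supplies.
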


We first present the proof of Theorem \ref{thm:quadratic_convergence} assuming Theorem \ref{thm:non_asymptotic_convergence} holds, 
and then present the proof of the latter. The proof of Theorem~\ref{thm:noisy} appears in Appendix~\ref{sec:appendix_proofOfNoisyTheorem}.

\begin{proof}[Proof of Theorem \ref{thm:quadratic_convergence}]
Since $h(\epsilon) = 1 + \epsilon^2 - 2\epsilon^4 + 2\epsilon^6 + \mathcal{O}\left(\epsilon^8\right)$, it follows that $r(\epsilon) = 1 - \frac{1}{2} \epsilon^6 + \mathcal{O}\left(\epsilon^8\right)$. Substituting these Taylor expansions into $R(\epsilon_t, \delta_t)$ gives
\[
R(\epsilon_t, \delta_t) = \left[\frac{\epsilon_t^6 + \delta_t^6 + \mathcal{O}\left(\epsilon_t^8\right)+\mathcal{O}\left(\delta_t^8\right)}{\epsilon_t^2 + \delta_t^2 + \mathcal{O}\left(\epsilon_t^4\right)+\mathcal{O}\left(\delta_t^4\right)}\right]^\frac{1}{2} = \sqrt{\epsilon_t^4-\epsilon_t^2\delta_t^2+\delta_t^4}\,\cdot (1+o(1)) .
\]
\end{proof}
%$R(\epsilon_t, \delta_t) = \left[\frac{\epsilon_t^6 + \delta_t^6 + \mathcal{O}\left(\epsilon_t^8\right)+\mathcal{O}\left(\delta_t^8\right)}{\epsilon_t^2 + \epsilon_t^4/4 + \epsilon_t^6/8 + \delta_t^2 + \delta_t^4/4 + \delta_t^6/8 + \mathcal{O}\left(\epsilon_t^8\right)+\mathcal{O}\left(\delta_t^8\right)}\right]^\frac{1}{2}$
%$R(\epsilon_t, \delta_t) = \left[\frac{\epsilon_t^6 + \delta_t^6 + \mathcal{O}\left(\epsilon_t^8\right)+\mathcal{O}\left(\delta_t^8\right)}{\epsilon_t^2 + \delta_t^2 + \mathcal{O}\left(\epsilon_t^4\right)+\mathcal{O}\left(\delta_t^4\right)}\right]^\frac{1}{2}$. Replacing now $\epsilon_t$ and $\delta_t$ with $t \epsilon_t$ and $t \delta_t$, taking $t\to 0$ while keeping $\epsilon_t$ and $\delta_t$ fixed, and then restoring $t=1$, yield $R(\epsilon_t, \delta_t) \to \sqrt{\epsilon_t^4 - \epsilon_t^2 \delta_t^2 + \delta_t^4}$. By virtue of Theorem~\ref{thm:non_asymptotic_convergence}, this proves the thesis.
%\end{proof}

To prove Theorem~\ref{thm:non_asymptotic_convergence}, we  use the following lemma which provides a closed form 
expression to the minimal norm solution of Eq.~\eqref{eq:ILS_update_rule_step1}, in the rank-1 case where all entries of the matrix have been observed. 
The proof of this auxiliary lemma appears in Appendix \ref{sec:appendix_proofOfMinNormSolLemma}. 

\begin{lemma}\label{lem:minNormSol_generalX} 
	Assume that $\Omega=[m]\times [n]$. Given non-zero vectors $(u_t,v_t)\in \mathbb{R}^m\times \mathbb{R}^n$
and an observed matrix $X$, the minimal norm solution to the least squares problem \eqref{eq:ILS_update_rule_step1} is
\begin{align}
\tilde{u} = \frac{1}{\|{v}_t\|^2} \left(X{v}_t - \frac{{u}_t^\top X {v}_t}{N_t} {u}_t\right), \quad
\tilde{v} = \frac{1}{\|u_t\|^2} \left(X^\top {u}_t - \frac{{u}_t^\top X {v}_t}{N_t} {v}_t\right). 
		\label{eq:minNormSol_generalX}
\end{align}
where $N_t=\|u_t\|^2 + \|v_t\|^2$. If the matrix $X$ is rank-one, $X = \sigma uv^\top$ with $\sigma>0$, 
and all vectors are normalized, $\|u\|=\|v\|=\|u_t\|=\|{v}_t\|=1$, Eq.~(\ref{eq:minNormSol_generalX}) simplifies to 
\begin{align}\label{eq:minNormSol_rank1X}
\begin{aligned}
\tilde{u} = \left(u - \tfrac 12 \alpha_t u_t\right)\beta_t \sigma, \quad
\tilde{v} = \left(v - \tfrac 12 \beta_t v_t\right)\alpha_t \sigma
\end{aligned}
\end{align}
where $\alpha_t = {u}^\top {u}_t$ and $\beta_t = v^\top {v}_t$.
\end{lemma}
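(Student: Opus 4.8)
The plan is to characterize the minimal norm solution by the standard fact that among all solutions to a consistent (or least-squares) linear system, the minimal $\ell_2$-norm solution is the one lying in the row space of the linear map, i.e.\ orthogonal to the kernel. First I would set up the least squares problem explicitly. Since $\Omega = [m]\times[n]$, the objective in Eq.~\eqref{eq:ILS_update_rule_step1} is simply $\|u_t b^\top + a v_t^\top - X\|_F^2$ over $(a,b)\in\mathbb{R}^m\times\mathbb{R}^n$. The natural approach is to write the normal equations obtained by differentiating with respect to $a$ and $b$, which yields a coupled pair of linear equations. Differentiating with respect to $a$ gives a relation of the form $\|v_t\|^2 a + (b^\top v_t) u_t = X v_t$, and symmetrically in $b$. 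I would solve this coupled system to obtain the general solution, then isolate the minimal norm one.

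Next, I would invoke Lemma~\ref{lemma:rank_deficiency}, which establishes that in the rank-1 fully observed case the kernel of the linear map \eqref{eq:leastSquares_linearMap} is exactly one-dimensional, spanned by $(u_t, -v_t)$. The minimal norm solution is therefore the unique solution orthogonal to this kernel vector, i.e.\ satisfying $\tilde u^\top u_t - \tilde v^\top v_t = 0$ (up to the norm weighting). I would use this orthogonality constraint to pin down the free parameter in the general solution of the normal equations, which should produce the claimed formula \eqref{eq:minNormSol_generalX} with the term $\frac{u_t^\top X v_t}{N_t}$ where $N_t = \|u_t\|^2 + \|v_t\|^2$; the appearance of $N_t$ in the denominator is exactly the signature of enforcing this orthogonality against $(u_t,-v_t)$.

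Finally, to obtain the simplified form \eqref{eq:minNormSol_rank1X}, I would specialize to $X = \sigma u v^\top$ with all four vectors unit-normalized. Here $X v_t = \sigma (v^\top v_t) u = \sigma \beta_t u$, $X^\top u_t = \sigma \alpha_t v$, and $u_t^\top X v_t = \sigma \alpha_t \beta_t$, while $N_t = 2$. Substituting these into \eqref{eq:minNormSol_generalX} should directly collapse to $\tilde u = \sigma\beta_t(u - \tfrac12 \alpha_t u_t)$ and symmetrically for $\tilde v$, after which the routine algebra finishes the proof.

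I expect the main obstacle to be carefully justifying that the particular solution I write down is genuinely the \emph{minimal norm} one, rather than just \emph{some} solution of the normal equations. The normal equations alone determine the solution only up to the kernel, so the crux is correctly imposing orthogonality to the kernel direction $(u_t,-v_t)$ and verifying that the resulting formula satisfies both this constraint and the normal equations simultaneously. The coupling between the $a$- and $b$-equations (each depends on the other through the inner products $b^\top v_t$ and $a^\top u_t$) is where the bookkeeping is most delicate, and this is where the factor $N_t$ must be tracked precisely.
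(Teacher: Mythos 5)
Your plan is correct, and it reaches the formulas by a genuinely different route than the paper. The paper works in the vectorized picture: it writes out the matrix $\tilde A$ of the map $(a,b)\mapsto \Vectorize(u_t b^\top + a v_t^\top)$, states an explicit entrywise formula for the Moore--Penrose pseudoinverse $\tilde A^\dagger$ (its Lemma~\ref{lem:pseudoinverse}), verifies the four Penrose conditions by direct computation of $\tilde A\tilde A^\dagger$ and $\tilde A^\dagger\tilde A$, and then obtains the minimal norm solution as $\tilde A^\dagger\Vectorize(X)$. That argument is verification-based---the pseudoinverse has to be guessed or known in advance---but it yields $\tilde A^\dagger$ itself in closed form. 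You instead derive the answer constructively: the normal equations give $\|v_t\|^2 a + (b^\top v_t)u_t = Xv_t$ and $\|u_t\|^2 b + (a^\top u_t)v_t = X^\top u_t$; setting $s=a^\top u_t$, $r=b^\top v_t$, both equations collapse to the single relation $s\|v_t\|^2 + r\|u_t\|^2 = u_t^\top X v_t$, leaving exactly the one-parameter kernel family guaranteed by Lemma~\ref{lemma:rank_deficiency}, and the minimal norm solution is singled out by orthogonality to the kernel direction $(u_t,-v_t)$, i.e.\ $s=r$, giving $s=r=\frac{u_t^\top X v_t}{N_t}$ and hence \eqref{eq:minNormSol_generalX}; the specialization to $X=\sigma uv^\top$ is then the same routine substitution as in the paper. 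Two small points to tidy up: your parenthetical ``up to the norm weighting'' is spurious---in the standard inner product on $\mathbb{R}^{m+n}$ the orthogonality condition is exactly $\tilde u^\top u_t - \tilde v^\top v_t = 0$, no weighting involved---and you should state explicitly that the set of least-squares minimizers is the affine translate of the kernel by any particular normal-equation solution (the objective being convex quadratic), so that this single orthogonality constraint indeed determines the unique minimal norm point. With those remarks, your argument is complete and arguably more transparent than the paper's, at the cost of not producing the explicit pseudoinverse as a byproduct.
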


%We are now ready to prove Theorem~\ref{thm:non_asymptotic_convergence} and Remarks~\ref{rem:optimal_weighting}-\ref{rem:non_minimal_solution}.

\begin{proof}[Proof of Theorem~\ref{thm:non_asymptotic_convergence}]
It follows from Lemma~\ref{lem:minNormSol_generalX}, that the result of step I 
of \texttt{R2RILS} 
is given by Eq.~\eqref{eq:minNormSol_rank1X}. 
The normalized result is thus
\begin{align}\label{eq:tilde_normalized_withSign}
\ColNorm\left(\tilde{u}_t\right) = \frac{u - \frac 12 \alpha_t u_t}{\tilde{\alpha}_t} \text{sign}(\beta_t), \quad
\ColNorm\left(\tilde{v}_t\right) = \frac{v - \frac 12 \beta_t v_t}{\tilde{\beta}_t} \text{sign}(\alpha_t),
\end{align}
where $\tilde \alpha_t = \sqrt{1-\frac{3\alpha_t^2}{4}}$ and $\tilde \beta_t = \sqrt{1-\frac{3\beta_t^2}{4}}$.
Assume for now that $\alpha_t, \beta_t > 0$. Later we will prove that $\alpha_1, \beta_1 > 0$ indeed implies $\alpha_t, \beta_t > 0$ for all $t\geq 1$. Then %\eqref{eq:tilde_normalized_withSign} becomes
%\begin{align}\label{eq:tilde_normalized}
%\ColNorm\left(\tilde{u}_t\right) = \frac{u - \frac 12 \alpha_t u_t}{\tilde{\alpha}_t}, \quad
%\ColNorm\left(\tilde{v}_t\right) = \frac{v - \frac 12 \beta_t v_t}{\tilde{\beta}_t}.
%\end{align}
the next estimate is 
\begin{align*}
u_{t+1} &= \ColNorm\left(u_t + \ColNorm(\tilde{u}_t) \right) = \frac{u + (\tilde{\alpha}_t-\frac 12 \alpha_t) u_t}{\sqrt{2-\frac 32 \alpha_t^2+\alpha_t\tilde{\alpha}_t}}
\end{align*}
with a similar expression for $v_{t+1}$.
Their projections on the true vectors are
\begin{align}\begin{aligned}\label{eq:alphabeta_nextEstimate}
\alpha_{t+1} = u^\top u_{t+1} = \frac{1+\left(\tilde{\alpha_t}-\frac 12 \alpha_t\right)\alpha_t}{\sqrt{2-\frac 32 \alpha_t^2+\alpha_t\tilde{\alpha}_t}}, \quad
\beta_{t+1} = v^\top v_{t+1} = \frac{1+\left(\tilde{\beta}_t- \frac 12 \beta_t\right)\beta_t}{\sqrt{2 - \frac 32 \beta_t^2 + \beta_t \tilde\beta_t}}. 
\end{aligned}\end{align}
The norm of the next estimate error is thus
\begin{align*}
E_{t+1} &= \left\| \begin{pmatrix} u_{t+1} \\  v_{t+1} \end{pmatrix} - \begin{pmatrix} u \\  v \end{pmatrix} \right\| 
%= \sqrt{\left\| u_{t+1} - u \right\|^2 + \left\| v_{t+1} - v \right\|^2 } \nonumber \\
%&
= \sqrt{2\left(2 - \alpha_{t+1} - \beta_{t+1}\right)}.
\end{align*}
Similarly, $E_t =\sqrt{2\left(2 - \alpha_t - \beta_t\right)}$. Finally we plug $\epsilon_t = \sqrt{1-\alpha_t^2}$ and $\delta_t = \sqrt{1-\beta_t^2}$. With $h(\epsilon_t)$ and $r(\epsilon_t)$ as defined in \eqref{eq:h_r},
%$\frac{1 + \left(\tilde\alpha_t - \frac{\alpha_t}{2}\right)\alpha_t}{\sqrt{2-\frac{3}{2}\alpha_t^2 + \alpha_t \tilde\alpha_t}} = \frac{1 + \epsilon_t^2 + h(\epsilon_t)}{\sqrt{2(1+3\epsilon_t^2+h(\epsilon_t)}}$,
$\left(\tilde\alpha_t - \frac{\alpha_t}{2}\right)\alpha_t = \epsilon_t^2 + h(\epsilon_t)$ and $-\frac{3}{2}\alpha_t^2 + \alpha_t \tilde\alpha_t = 2\left(3\epsilon_t^2+h(\epsilon_t)\right)$,
implying $\alpha_{t+1}=r(\epsilon_t)$ and $\beta_{t+1} = r\left(\delta_t\right)$. Thus $\frac{E_{t+1}}{E_t} = R(\epsilon_t, \delta_t)$.

To conclude the proof, we prove by induction that $\alpha_t, \beta_t > 0$ for all $t\geq 1$. At $t=1$, the condition is one the assumptions of the theorem. Next, assuming $\alpha_t, \beta_t > 0$ yields Eq.~\eqref{eq:alphabeta_nextEstimate}. Since $\tilde{\alpha}_t-\alpha_t/2\geq 0$ for any $\alpha_t\in[0,1]$ then $\alpha_{t+1} > 0$. A similar proof holds for $\beta_{t+1} $. 
\end{proof}

\begin{proof}[Proof of Remark~\ref{rem:optimal_weighting}]
Note that, following Eqs.~\eqref{eq:tilde_normalized_withSign}, $w_u = \frac{2\tilde{\alpha}_t}{\alpha_t}$ and $w_v = \frac{2\tilde{\beta}_t}{\beta_t}$.
Hence, the next estimate of the modified algorithm \eqref{eq:R2RILS_variant} is
\begin{align*}
u_{t+1} &= \ColNorm\left(u_t + w_u \ColNorm\left(\left(u - 
%\frac 12 \alpha_t u_t
\tfrac{\alpha_t u_t}2
\right)\beta_t \sigma\right)\right) 
%\\
%&
= \ColNorm\left(u_t +
% \frac{2\tilde{\alpha}_t}{\alpha_t} \frac{u - \frac 12 \alpha_t u_t}{\tilde{\alpha}_t}\right) 
  \tfrac{\tilde{\alpha}_t}{\alpha_t} \tfrac{2u - \alpha_t u_t}{\tilde{\alpha}_t}\right) 
 = u
\end{align*}
and similarly $v_{t+1} = v$. Since $\alpha_t = \sqrt{1-\epsilon_t^2} \to 1$ as $\epsilon_t\to 0$ and $w_u = \frac{2}{\alpha_t}\sqrt{1-\frac{3\alpha_t^2}{4}} \to 1$ as $\alpha_t\to 1$, and similarly $w_v \to 1$ as $\delta_t\to 0$, the second part of the remark follows.
\end{proof}

\begin{proof}[Proof of Remark~\ref{rem:non_minimal_solution}]
We focus on the column space part. The proof for the row space part is similar.
The minimal norm solution $\tilde u_t$ is given by \eqref{eq:minNormSol_rank1X}. Hence, $\ColNorm(\tilde u_t)$ 
cancels the factor $\beta_t \sigma$, decoupling it from the row estimate $v_t$ and the singular value of $X$. 
%on it decouples it from the row space part of the current estimate (cancels $\beta_t$), and cancels the dependency on the singular value $\sigma$.
 In contrast, the column space part of the next estimate of the non-minimal norm solution \eqref{eq:general_non_minimal_solution} is 
 %given by
\begin{align*}
\hat{u}_{t+1} &= \ColNorm\left( u_t + \ColNorm\left(\tilde u_t + \lambda_t u_t  \right)\right) \nonumber 
%\\
%&= \ColNorm\left[ u_t + \ColNorm\left(\left(u - \frac 12 \alpha_t u_t\right)\beta'_t + \lambda_t u_t\right)  \right] \nonumber \\
%&= \ColNorm\left[ u_t + \frac{\beta'_t u + \left(\lambda_t - \frac 12 \alpha_t \beta'_t\right) u_t}{\gamma} \right] \nonumber \\
%&
= \frac{\beta'_t u + \left(\gamma - \frac 12 \alpha_t \beta'_t +  \lambda_t  \right) u_t}{\sqrt{\tilde{\alpha}_t^2 {\beta'_t}^2 + (\alpha_t \beta'_t + \gamma + \lambda_t) (\gamma + \lambda_t)}} 
\end{align*}
where $\beta'_t = \beta_t \sigma$, $\gamma = \sqrt{\tilde{\alpha}_t^2{\beta'_t}^2 + \lambda_t \alpha_t \beta'_t + \lambda_t^2}$ and $\tilde{\alpha}_t = \sqrt{1-\frac{3\alpha_t^2}{4}}$. Therefore, %the alignment with the correct vector $u$ is
\begin{align*}
\hat{\alpha}_{t+1} &= u^\top \hat{u}_{t+1} = \frac{\beta'_t + \left( \gamma - \frac 12 \alpha_t \beta'_t + \lambda_t \right) \alpha_t}{\sqrt{\tilde{\alpha}_t^2 {\beta'_t}^2 + (\alpha_t \beta'_t + \gamma + \lambda_t) (\gamma + \lambda_t)}} \\
&= 1 - \frac{4\lambda_t^2}{(\beta'_t + 2\lambda)^2} (1-\alpha_t) - \frac{2\beta'_t \lambda_t\left(4{\beta'_t}^2 - \beta'_t\lambda_t + 4\lambda_t^2\right)}{(\beta'_t + 2\lambda)^4}\left(1 - \alpha_t\right)^2  + \mathcal{O}\left((1 - \alpha_t)^3\right) .
\end{align*}
It is easy to verify that $\hat \alpha_{t+1} = 1 + \mathcal{O}\left((1-\alpha_t)^3\right)$ if and only if $\lambda_t = 0$. In fact, for any $\lambda_t \neq 0$, $\hat \alpha_{t+1} = 1 + \Theta\left(1-\alpha_t\right)$. Indeed, $\hat \alpha_{t+1} = 1 + \mathcal{O}\left((1-\alpha_t)^3\right)$ is a necessary and sufficient condition for quadratic convergence of the column space part,  $\frac{\left\|\hat u_{t+1} - u \right\|}{\left\|u_t - u \right\|} = \mathcal{O}\left(\epsilon_t^2\right)$, since $\left\| \hat u_{t+1} - u \right\| = \sqrt{2(1 - \hat \alpha_{t+1})}$, $\left\| u_t - u \right\| = \sqrt{2(1 - \alpha_t)}$ and
%$\epsilon_t = \sqrt{1-\alpha_t^2} = \sqrt{2(1-\alpha_t)} + \mathcal{O}\left((1-\alpha_t)^\frac{3}{2}\right)$.
$\epsilon_t^2 = 1-\alpha_t^2 = 2(1-\alpha_t) + \mathcal{O}\left((1-\alpha_t)^2\right)$.
%Similarly, one can show that $\hat \beta_{t+1} = 1 + \mathcal{O}\left((1-\beta_t)^3\right)$, which is a necessary and sufficient condition for quadratic convergence of the row space part, occurs if and only if $\lambda_t = 0$.
\end{proof}

%%%%%%%%%%%%%%%%%%%%%%%%%%
\appendix

\section{Proof of Theorem~\ref{thm:noisy}}\label{sec:appendix_proofOfNoisyTheorem}

First, we present two auxiliary lemmas regarding \texttt{R2RILS} in the presence of noise. 
%will use the following two auxiliary lemmas, which describe some properties of the estimates of \texttt{R2RILS} in the presence of noise.

\begin{lemma}\label{lem:noisyAlphaBeta_geq_delta}
Under the conditions of Theorem~\ref{thm:noisy}, with probability at least $1 - e^{-\frac{n}{2}}$,
\begin{align}\label{eq:noisyAlphaBeta_geq_delta}
{ \alpha_t, \beta_t \geq \frac 14, \quad \forall t\geq 2. }
\end{align}
\end{lemma}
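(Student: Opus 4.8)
The plan is to prove the bound by induction on $t$, conditioning throughout on a single high-probability event that controls the Gaussian noise uniformly across all iterations (legitimate since $Z$ is drawn once and then fixed). First I would define the event $\mathcal{E} = \{\|Z\|_{\text{op}} \leq 3\eta\}$. Because $\rho=\sqrt{m/n}\leq 1$ gives $m\leq n$, the standard concentration bound $\Pr(\|Z\|_{\text{op}} \geq \eta_0(\sqrt m + \sqrt n) + \eta_0 s)\leq e^{-s^2/2}$ for a Gaussian matrix, applied with $s=\sqrt n$ and $\sqrt m + 2\sqrt n \leq 3\sqrt n$, yields $\Pr(\mathcal{E}^c)\leq e^{-n/2}$. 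Thus $\mathcal{E}$ holds with exactly the probability claimed in the lemma, and on $\mathcal{E}$ every per-iteration estimate below is simultaneously valid, recalling $\eta=\eta_0\sqrt n$.

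The heart of the argument is a deterministic one-step claim, valid on $\mathcal{E}$: if $\alpha_t,\beta_t\geq\delta$ then $\alpha_{t+1},\beta_{t+1}\geq \tfrac14$. Granting this, the lemma follows by induction. The hypothesis $\alpha_1,\beta_1\geq\delta$ gives $\alpha_2,\beta_2\geq \tfrac14$; since $\tfrac14\geq\delta$, the same claim reapplies at every subsequent step, delivering $\alpha_t,\beta_t\geq \tfrac14$ for all $t\geq 2$.

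To establish the one-step claim I would exploit that the minimal-norm solution of Lemma~\ref{lem:minNormSol_generalX} is \emph{linear} in $X$. Writing $X=\sigma uv^\top + Z$, I split $\tilde u_t = s + z$, where the signal part $s=(u-\tfrac12\alpha_t u_t)\beta_t\sigma$ is precisely the noise-free solution of Eq.~\eqref{eq:minNormSol_rank1X}, and the noise part is $z=Zv_t - \tfrac12(u_t^\top Z v_t)u_t$ (using $\|u_t\|=\|v_t\|=1$, so $N_t=2$). These two pieces are bounded as $\|z\|\leq\tfrac32\|Z\|_{\text{op}}\leq\tfrac92\eta$ and $\|s\|=\beta_t\sigma\tilde\alpha_t\geq\tfrac12\delta\sigma$, where $\tilde\alpha_t=\sqrt{1-\tfrac34\alpha_t^2}\geq\tfrac12$ and $\beta_t\geq\delta$. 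The normalized step $w=\ColNorm(\tilde u_t)=(s+z)/\|s+z\|$ then obeys $\|w-w^{\text{sig}}\|\leq 2\|z\|/\|s\|$ for $w^{\text{sig}}=s/\|s\|$, and the hypothesis $\eta/\sigma\leq(\sqrt2/C)\delta$ with $C=50/(1-R)$ forces $2\|z\|/\|s\|$ below a small constant (about $0.24$). Finally, from $\alpha_{t+1}=(\alpha_t+u^\top w)/\|u_t+w\|$ together with $\|u_t+w\|\leq 2$ I would lower bound $\alpha_{t+1}\geq\tfrac12(\alpha_t+u^\top w^{\text{sig}}-\|w-w^{\text{sig}}\|)$; since the noise-free inner product $u^\top w^{\text{sig}}=(1-\tfrac12\alpha_t^2)/\tilde\alpha_t\geq 0.94$ over $\alpha_t\in[0,1]$, the right-hand side exceeds $\tfrac14$ with comfortable margin. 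The bound for $\beta_{t+1}$ is identical after swapping $u\leftrightarrow v$, $\alpha\leftrightarrow\beta$, using $\alpha_t\geq\delta$ and $\|Z^\top\|_{\text{op}}=\|Z\|_{\text{op}}$.

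The main obstacle is this perturbation step: controlling how the additive noise tilts the normalized direction $w$ away from $w^{\text{sig}}$ and propagates into $\alpha_{t+1}$, while tracking all constants so that the gap between the noise-free value $\alpha_{t+1}^{\text{noise-free}}=r(\epsilon_t)\geq 1/\sqrt2$ and the target $\tfrac14$ is preserved. Everything else -- the operator-norm concentration bound, the exact signal/noise decomposition granted by linearity of the closed form, and the induction -- is routine; the real work is verifying that the constant $C=50/(1-R)$ leaves enough slack.
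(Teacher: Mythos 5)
Your proof is correct, and it reaches the key one-step claim by a genuinely more self-contained route than the paper. The skeleton is identical: both arguments condition on the single Gaussian event of Lemma~\ref{lem:Zbar_maximalSV} with $s=1$ (your $\|Z\|_{\mathrm{op}} \leq 3\eta$ is the same event in unnormalized scale, with the same probability $1-e^{-n/2}$), and both run the induction ``$\alpha_t,\beta_t\geq\delta \Rightarrow \alpha_{t+1},\beta_{t+1}\geq\tfrac14$, then re-apply using $\tfrac14\geq\delta$.'' Where you diverge is in how the one-step claim is proved. The paper gets it almost for free from machinery it needs anyway for Theorem~\ref{thm:noisy}: it quotes the deviation bound of Lemma~\ref{lem:nextNoisyEstimateDeviation_assumingAlphaBetaGetDelta}, $\|u_{t+1}-u_{t+1}^{(0)}\| \leq \frac{50}{\sqrt2\,\delta}\frac{\eta}{\sigma} \leq 1-R$, observes from Eq.~\eqref{eq:alphabeta_nextEstimate} that the noiseless update always satisfies $\alpha_{t+1}^{(0)}\geq\frac{1}{\sqrt2}$, and concludes via the triangle inequality, $\sqrt{2(1-\alpha_{t+1})} \leq (1-R)+R\sqrt2$, which gives $\alpha_{t+1}>\tfrac14$. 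You instead bypass Lemma~\ref{lem:nextNoisyEstimateDeviation_assumingAlphaBetaGetDelta} entirely and re-derive a cruder version of its first step: your signal/noise split $\tilde u_t = s+z$ is exactly the paper's Eq.~\eqref{eq:utilde_withZ}, your bounds $\|s\| = \tilde\alpha_t\beta_t\sigma \geq \tfrac12\delta\sigma$ and $\|z\|\leq\tfrac32\|Z\|_{\mathrm{op}}$ match its Eq.~\eqref{eq:ut_utilde0} (in fact $\|z\|\leq\|Z\|_{\mathrm{op}}$, since $I-\tfrac12 u_tu_t^\top$ has operator norm $1$, though your cruder constant suffices), and then the elementary normalization-perturbation estimate $\|w-w^{\mathrm{sig}}\| \leq 2\|z\|/\|s\| \leq 18\sqrt2/C \approx 0.24$, combined with $\min_{\alpha_t\in[0,1]} \bigl(1-\tfrac12\alpha_t^2\bigr)/\tilde\alpha_t = \tfrac{2\sqrt2}{3}\approx 0.943$ and $\alpha_{t+1}\geq\tfrac12\left(\alpha_t+u^\top w\right)$, closes the claim with margin ($\alpha_{t+1}\gtrsim 0.35 > \tfrac14$); all of these numerical checks are sound. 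What each approach buys: the paper's version is shorter and reuses the sharp two-normalization perturbation analysis that the noisy theorem requires anyway, while yours is more elementary --- one crude perturbation bound on the normalized direction, no appeal to the noiseless averaged value $r(\epsilon)\geq 1/\sqrt2$ --- and it makes explicit a point the paper leaves implicit, namely that the high-probability event is a single event in $Z$, fixed once, so the conclusion holds simultaneously for all $t\geq 2$ rather than per iteration.
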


\begin{lemma}\label{lem:nextNoisyEstimateDeviation_assumingAlphaBetaGetDelta}
Denote by $\left(u_{t+1}, v_{t+1}\right)$ and $\left(u_{t+1}^{(0)}, v_{t+1}^{(0)}\right)$ the next estimate of \texttt{R2RILS} in the presence and in the absence of noise, starting from $(u_t,v_t)$.
Let $\delta \in \left(0, \frac 14\right]$ and assume $(u_t,v_t)$ satisfies $\alpha_t, \beta_t \geq \delta$. 
Then under the conditions of Theorem~\ref{thm:noisy}, w.p.~at least $1 - e^{-\frac{n}{2}}$,
\begin{align}
\left\| u_{t+1} - u_{t+1}^{(0)} \right\| \leq \frac{50}{\sqrt 2 \delta} \frac{\eta}{\sigma}
\quad \mbox{and} \quad
\left\| v_{t+1} - v_{t+1}^{(0)} \right\| \leq \frac{50}{\sqrt 2 \delta} \frac{\eta}{\sigma}. \label{eq:nextNoisyEstimateDeviation_uvApart}
\end{align}
\end{lemma}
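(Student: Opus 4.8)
The plan is to exploit the fact that, for a \emph{fixed} pair $(u_t,v_t)$ with $\|u_t\|=\|v_t\|=1$, the minimal-norm solution of Step~I is a \emph{linear} function of the observed matrix. By Lemma~\ref{lem:minNormSol_generalX} (with $N_t=2$) we have $\tilde u_t = Xv_t - \tfrac12(u_t^\top X v_t)u_t$ and $\tilde v_t = X^\top u_t - \tfrac12(u_t^\top X v_t)v_t$. Writing $X=\sigma uv^\top + Z$ and subtracting the noise-free solution (which applies the same map to $\sigma uv^\top$), the pre-normalization perturbation is the same linear map applied to $Z$:
\begin{align*}
\tilde u_t - \tilde u_t^{(0)} = \left(I - \tfrac12 u_t u_t^\top\right)Zv_t, \qquad
\tilde v_t - \tilde v_t^{(0)} = \left(I - \tfrac12 v_t v_t^\top\right)Z^\top u_t .
\end{align*}
Since each correction matrix has operator norm $1$, this gives at once $\|\tilde u_t - \tilde u_t^{(0)}\| \le \|Zv_t\| \le \|Z\|_{\mathrm{op}}$ and likewise $\|\tilde v_t - \tilde v_t^{(0)}\| \le \|Z\|_{\mathrm{op}}$, where $\|Z\|_{\mathrm{op}}$ denotes the spectral norm.

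Next I would control everything through the single event $\{\|Z\|_{\mathrm{op}}\le 3\eta\}$. Because $Z$ has i.i.d.\ $N(0,\eta_0^2)$ entries with $\eta=\eta_0\sqrt n$ and $m\le n$, standard Gaussian-matrix concentration, $\mathbb{P}(\|Z\|_{\mathrm{op}}\ge \eta_0(\sqrt m+\sqrt n+t))\le e^{-t^2/2}$ with $t=\sqrt n$, yields $\|Z\|_{\mathrm{op}}\le \eta_0(\sqrt m+2\sqrt n)\le 3\eta$ with probability at least $1-e^{-n/2}$. The point is that the bounds above use only $\|Zv_t\|\le\|Z\|_{\mathrm{op}}$ and $\|Z^\top u_t\|\le\|Z\|_{\mathrm{op}}$, so they hold simultaneously for \emph{every} unit pair $(u_t,v_t)$ on this one event; this is precisely what will allow the estimate to be applied later to the data-dependent iterate of \texttt{R2RILS}, even though that iterate is correlated with $Z$.

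Then I would propagate these $O(\eta)$ perturbations through the two nested normalizations of \eqref{eq:ILS_update_rule_step2}. From the closed form \eqref{eq:minNormSol_rank1X} and $\alpha_t,\beta_t\ge\delta$, the noise-free directions obey $\|\tilde u_t^{(0)}\|=\beta_t\sigma\sqrt{1-\tfrac34\alpha_t^2}\ge\tfrac12\delta\sigma$ and, symmetrically, $\|\tilde v_t^{(0)}\|\ge\tfrac12\delta\sigma$; the low-noise condition \eqref{eq:lowNoiseLevel_condition} keeps $\|Z\|_{\mathrm{op}}$ small enough that the noisy directions are nonzero and $\ColNorm$ is well defined. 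Applying the elementary estimate $\big\|\tfrac{a}{\|a\|}-\tfrac{b}{\|b\|}\big\|\le\tfrac{2\|a-b\|}{\max(\|a\|,\|b\|)}$ with the noise-free norm in the denominator gives $\|\ColNorm(\tilde u_t)-\ColNorm(\tilde u_t^{(0)})\|\le\tfrac{4\|Z\|_{\mathrm{op}}}{\delta\sigma}$. A second application to $w=u_t+\ColNorm(\tilde u_t)$ versus $w^{(0)}=u_t+\ColNorm(\tilde u_t^{(0)})$ uses the identity from the proof of Theorem~\ref{thm:non_asymptotic_convergence}, namely $\|w^{(0)}\|=\tilde\alpha_t^{-1}\sqrt{2-\tfrac32\alpha_t^2+\alpha_t\tilde\alpha_t}\ge1$, giving $\|u_{t+1}-u_{t+1}^{(0)}\|\le 2\|\ColNorm(\tilde u_t)-\ColNorm(\tilde u_t^{(0)})\|$. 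Chaining these with $\|Z\|_{\mathrm{op}}\le 3\eta$ produces a bound $c\,\eta/(\delta\sigma)$ with $c\le 24<50/\sqrt 2$, which matches \eqref{eq:nextNoisyEstimateDeviation_uvApart}; the argument for $v$ is identical.

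I expect the main obstacle to be twofold. Conceptually, the delicate point is the correlation between the noise $Z$ and the iterate $(u_t,v_t)$; I avoid any false independence assumption by bounding every random quantity through the single, iterate-independent scalar $\|Z\|_{\mathrm{op}}$, so that one good event of probability $1-e^{-n/2}$ controls all unit pairs at once. Technically, the work is in keeping every denominator bounded away from zero across the two normalizations and tracking the multiplicative constants so the final constant stays below $50/\sqrt2$; this is exactly where the lower bounds $\|\tilde u_t^{(0)}\|\ge\delta\sigma/2$ and $\|w^{(0)}\|\ge1$, together with the low-noise assumption \eqref{eq:lowNoiseLevel_condition}, carry the argument.
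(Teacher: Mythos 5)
Your proposal is correct, and while it follows the paper's overall skeleton---perturb the closed-form minimal-norm solution of Lemma~\ref{lem:minNormSol_generalX} linearly in the noise, control all randomness through the single event $\{\sigma_1(\bar Z)\leq 3\}$ of Lemma~\ref{lem:Zbar_maximalSV} (your event $\|Z\|_{\mathrm{op}}\leq 3\eta$ is identical, since $Z=\eta\bar Z$), and propagate the $O(\eta)$ perturbation through the two normalizations---the mechanism you use for the propagation is genuinely different and considerably simpler. The paper decomposes each perturbed vector into its component along the noiseless direction plus an orthogonal remainder, first $\tilde u_t=(1+\eta\tilde r)\,\tilde u_t^{(0)}+\eta\tilde w$ and then again for $u_t+\bar u_t$, and tracks a cascade of constants $\tilde r,\tilde w,\tilde\gamma,\tilde a,\bar r,\bar w,\bar\gamma,\bar a$ across Eqs.~\eqref{eq:utilde_decomposition}--\eqref{eq:utp1_utp10_distance}; you replace all of this bookkeeping with two applications of the elementary bound $\bigl\|\tfrac{a}{\|a\|}-\tfrac{b}{\|b\|}\bigr\|\leq \tfrac{2\|a-b\|}{\|b\|}$ (valid for any nonzero $a,b$ with the noise-free norm in the denominator, by the standard splitting $\tfrac{a}{\|a\|}-\tfrac{b}{\|b\|}=\tfrac{a-b}{\|b\|}+a(\tfrac{1}{\|a\|}-\tfrac{1}{\|b\|})$). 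Your key ingredients all check out: with $N_t=2$ the pre-normalization perturbation is exactly $(I-\tfrac12 u_tu_t^\top)Zv_t$, matching the paper's Eq.~\eqref{eq:utilde_withZ}; the correction matrix has operator norm $1$; $\|\tilde u_t^{(0)}\|=\tilde\alpha_t\beta_t\sigma\geq\delta\sigma/2$; and the chain $3\eta \to 12\eta/(\delta\sigma)\to 24\eta/(\delta\sigma)$ lands below the target $\tfrac{50}{\sqrt2}\cdot\tfrac{\eta}{\delta\sigma}\approx 35.4\,\eta/(\delta\sigma)$, in fact slightly sharper than the paper's own intermediate bound of $32\,\eta/(\delta\sigma)$. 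Your observation that bounding everything by the iterate-independent scalar $\|Z\|_{\mathrm{op}}$ resolves the correlation between $Z$ and $(u_t,v_t)$ is exactly how the paper handles it, via $|\zeta_{t,1}|,|\zeta_{t,2}|,\|\bar z_{v,t}\|\leq\sigma_1(\bar Z)$. Two small points to make explicit in a final write-up: the second normalization's denominator must be shown nonzero, which follows from $\|u_t+\ColNorm(\tilde u_t^{(0)})\|^2=2+\alpha_t/\tilde\alpha_t\geq 2$ together with the low-noise condition \eqref{eq:lowNoiseLevel_condition}, giving $\|u_t+\ColNorm(\tilde u_t)\|\geq\sqrt2-12\eta/(\delta\sigma)>0$; and note that this same computation shows $\|w^{(0)}\|\geq\sqrt 2$ rather than just $\geq 1$, so your final constant could be improved to about $17/(\delta\sigma)$. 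What your route buys is a two-line replacement for the bulk of the paper's Appendix-A computation with a better constant; what the paper's explicit decomposition buys is finer information (the exact parallel and orthogonal components of the noisy iterate), which is not needed for this lemma.
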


%%%%%%%%%%%%%%%%%%%%%%%%
% PROOF OF THEOREM
%%%%%%%%%%%%%%%%%%%%%%%%
\begin{proof}[Proof of Theorem~\ref{thm:noisy}]
Denote by $E_t$ the $\ell_2$ estimation error of \texttt{R2RILS} at iteration $t$,
\begin{align*}
E_t = \left\| \begin{pmatrix} u_t \\ v_t \end{pmatrix} - \begin{pmatrix} u \\ v \end{pmatrix} \right\|
  = \sqrt{2(1-\alpha_t) + 2(1-\beta_t)}
\end{align*}
{Since by Lemma~\ref{lem:noisyAlphaBeta_geq_delta} $\alpha_2, \beta_2 \geq \frac 14$, then 
	$E_2  \leq \sqrt 3$}. Let $u_{t+1}, v_{t+1}, u_{t+1}^{(0)}$ and $v_{t+1}^{(0)}$ be defined as in Lemma~\ref{lem:nextNoisyEstimateDeviation_assumingAlphaBetaGetDelta}.
Combining Lemma~\ref{lem:noisyAlphaBeta_geq_delta} and Theorem~\ref{thm:non_asymptotic_convergence} gives that for the noiseless update
\begin{align}\label{eq:contraction}
\left\| \begin{pmatrix} u_{t+1}^{(0)} \\  v_{t+1}^{(0)} \end{pmatrix} - \begin{pmatrix} u \\  v \end{pmatrix} \right\| \leq R \left\| \begin{pmatrix} u_t \\  v_t \end{pmatrix} - \begin{pmatrix} u \\  v \end{pmatrix} \right\|
= R E_t.
\end{align}
Combining Lemma~\ref{lem:noisyAlphaBeta_geq_delta} and Lemma~\ref{lem:nextNoisyEstimateDeviation_assumingAlphaBetaGetDelta} {with $\delta = \frac 14$} gives that with probability at least $1-e^{-\frac n2}$
\begin{align}\label{eq:deviation}
\left\| \begin{pmatrix} u_{t+1} \\ v_{t+1} \end{pmatrix} - \begin{pmatrix} u_{t+1}^{(0)} \\ v_{t+1}^{(0)} \end{pmatrix} \right\| 
	\leq 200 \frac{\eta}{\sigma} = 4 C (1-R)\frac{\eta}{\sigma}.
\end{align}
Combining Eqs.~\eqref{eq:contraction}, \eqref{eq:deviation} and the triangle inequality gives
that $E_{t+1} \leq R E_t + 4C(1-R)\frac{\eta}{\sigma}$.
%that w.p.~at least $1-e^{-\frac n2}$
%\begin{align*}
%E_{t+1} 
%\leq R E_t + 200\frac{\eta}{\sigma} = R E_t + 4C(1-R)\frac{\eta}{\sigma} .
%\end{align*}
Iteratively applying this recurrence relation, and the bound $E_2\leq \sqrt 3$, yields Eq.~\eqref{eq:noisyEstimatesError}.
\end{proof}

\begin{proof}[Proof of Lemma~\ref{lem:noisyAlphaBeta_geq_delta}]
In the absence of noise, by Theorem~\ref{thm:non_asymptotic_convergence}, $\alpha_t, \beta_t \geq \frac 14$ for all $t\geq 2$ as each iteration of \texttt{R2RILS} brings the estimate $(u_t,v_t)$ closer to the singular vectors $(u,v)$. 
%by a contraction factor of $R \simeq 0.54$. 
Eq.~\eqref{eq:noisyAlphaBeta_geq_delta} holds also in the presence of low noise 
%whose standard deviation $\eta$ satisfies Eq.~\eqref{eq:lowNoiseLevel_condition}, is that 
since the improvement in the estimate is larger than the effect of the noise.
We prove this by induction. Since by assumption $\alpha_1,\beta_1\geq \delta$ and $\delta \leq \frac 14$, it suffices to show that if $\alpha_t,\beta_t\geq \delta$
then $\alpha_{t+1},\beta_{t+1}\geq 1/4$. 
{
	%Suppose $\alpha_t, \beta_t \geq \delta$ for some $t\geq 1$. Note that by proving that this implies $\alpha_{t+1}, \beta_{t+1} \geq \frac 14$, we prove both the basis and the step of the induction, since by assumption both $\alpha_1, \beta_1 \geq \delta$ and $\frac 14 \geq \delta$.
From Eq.~\eqref{eq:alphabeta_nextEstimate}, for any $t\geq 1$, the minimal value of $\alpha_{t+1}^{(0)} = {u_{t+1}^{(0)}}^\top u$ is $\frac{1}{\sqrt{2}}$, and thus
$\left\| u_{t+1}^{(0)} - u \right\| = \sqrt{2\left(1-\alpha_{t+1}^{(0)}\right)} \leq R\sqrt{2}$. 
Combining Lemma~\ref{lem:nextNoisyEstimateDeviation_assumingAlphaBetaGetDelta} and assumption \eqref{eq:lowNoiseLevel_condition} gives that with probability at least $1-e^{-\frac n2}$,
$\left\| u_{t+1} - u_{t+1}^{(0)} \right\| \leq \frac{50}{\sqrt{2} \delta} \frac{\eta}{\sigma} \leq \frac{50}{C} = 1-R $.
Hence, by the triangle inequality,
\begin{align*}
\sqrt{2(1-\alpha_{t+1})} &= \left\| u_{t+1} - u \right\| \leq \left\|u_{t+1} - u_{t+1}^{(0)}\right\| + \left\|u_{t+1}^{(0)} - u \right\| \leq 1-R + R\sqrt 2,
\end{align*}
which implies $\alpha_{t+1} > \frac{1}{4}$. The proof for $\beta_{t+1}$  is similar.}
\end{proof}

Finally, to prove Lemma~\ref{lem:nextNoisyEstimateDeviation_assumingAlphaBetaGetDelta}, we use the following lemma
on the largest singular value $\sigma_1(\bar{Z})$ of a Gaussian random matrix $\bar{Z}$, see \citep[Theorem 2.13]{davidson_szarek_2001}.

\begin{lemma}
%	[Theorem~2.13, \citep{davidson_szarek_2001}] 
\label{lem:Zbar_maximalSV}
Let $m, n \in \mathbb{N}$ be such that $m \leq n$, and denote $\rho \equiv \sqrt{\frac mn} \leq 1$. Let $\bar Z \in \mathbb{R}^{m\times n}$ be a matrix whose entries $\bar Z_{i,j}$ are all i.i.d. $\mathcal N(0,1/n)$.
%~Gaussian with mean 0 and variance $\frac 1n$. 
%Denote its maximal singular value by $\sigma_1\left(\bar Z\right)$. 
Then for any $s \geq 0$
\begin{align}\label{eq:Zbar_maximalSV}
\mathbb{P}\left[\sigma_1\left(\bar Z\right) \leq 1 + \rho + s\right] \geq 1 - e^{-\frac{n s^2}{2}}.
\end{align}
\end{lemma}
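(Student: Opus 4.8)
The plan is to obtain this as a standard consequence of a mean bound on the operator norm together with Gaussian concentration, and to recover the stated normalization by rescaling. First I would reduce to a standard Gaussian matrix: writing $\bar Z = \frac{1}{\sqrt n} G$ with $G \in \mathbb{R}^{m\times n}$ having i.i.d.\ $\mathcal N(0,1)$ entries, we have $\sigma_1(\bar Z) = \frac{1}{\sqrt n}\sigma_1(G)$. Since $(\sqrt m + \sqrt n)/\sqrt n = 1 + \rho$, the event $\{\sigma_1(\bar Z) > 1 + \rho + s\}$ is exactly $\{\sigma_1(G) > \sqrt m + \sqrt n + t\}$ for $t = s\sqrt n$. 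Hence it suffices to prove the unnormalized tail bound $\mathbb{P}[\sigma_1(G) > \sqrt m + \sqrt n + t] \le e^{-t^2/2}$ and substitute $t = s\sqrt n$, which turns $e^{-t^2/2}$ into $e^{-ns^2/2}$.

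The second step is to bound $\mathbb{E}[\sigma_1(G)]$. I would write the operator norm as the supremum of the centered Gaussian process $X_{u,v} = u^\top G v$ over $(u,v)\in S^{m-1}\times S^{n-1}$, and compare it, via the Sudakov--Fernique inequality, to the simpler process $Y_{u,v} = g^\top u + h^\top v$ with independent $g\sim\mathcal N(0,I_m)$ and $h\sim\mathcal N(0,I_n)$. A direct covariance computation gives
\[
\mathbb{E}\big[(X_{u,v}-X_{u',v'})^2\big] = 2 - 2(u^\top u')(v^\top v'), \qquad
\mathbb{E}\big[(Y_{u,v}-Y_{u',v'})^2\big] = 4 - 2u^\top u' - 2v^\top v',
\]
and the required increment inequality reduces to $(1 - u^\top u')(1 - v^\top v') \ge 0$, which holds since both inner products lie in $[-1,1]$. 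Sudakov--Fernique then yields $\mathbb{E}[\sigma_1(G)] \le \mathbb{E}[\sup_{u,v} Y_{u,v}] = \mathbb{E}\|g\| + \mathbb{E}\|h\| \le \sqrt m + \sqrt n$, the final bound by Jensen's inequality applied to $\mathbb{E}\|g\| \le \sqrt{\mathbb{E}\|g\|^2} = \sqrt m$.

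The third step is the concentration bound. The map $G \mapsto \sigma_1(G)$ is $1$-Lipschitz with respect to the Frobenius norm, since $|\sigma_1(G) - \sigma_1(G')| \le \sigma_1(G - G') \le \|G - G'\|_F$, and the Frobenius norm is precisely the Euclidean norm on the $mn$ entries, viewed as a standard Gaussian vector. The Gaussian (Borell--Tsirelson--Ibragimov--Sudakov) concentration inequality for $1$-Lipschitz functions then gives $\mathbb{P}[\sigma_1(G) \ge \mathbb{E}[\sigma_1(G)] + t] \le e^{-t^2/2}$. Combining with the mean bound of the previous step and the rescaling of the first step completes the proof.

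I expect the main obstacle to be the expectation bound, specifically verifying the Sudakov--Fernique increment condition; once the reduction to $(1-u^\top u')(1-v^\top v')\ge 0$ is in hand it is immediate, but finding the right comparison process is the only non-mechanical choice. The concentration step is routine once Lipschitzness is observed. An alternative to Sudakov--Fernique would be Gordon's Gaussian min--max theorem, but since only the one-sided upper tail is needed here, the simpler comparison suffices.
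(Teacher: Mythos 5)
Your proposal is correct in every step: the rescaling $\bar Z = G/\sqrt n$ with $t = s\sqrt n$ converts the normalized claim to the standard tail bound $\mathbb{P}[\sigma_1(G) > \sqrt m + \sqrt n + t] \le e^{-t^2/2}$; your covariance computations for $X_{u,v}=u^\top G v$ and $Y_{u,v}=g^\top u + h^\top v$ are right, and the Sudakov--Fernique increment condition does reduce to $2(1-u^\top u')(1-v^\top v')\ge 0$, giving $\mathbb{E}[\sigma_1(G)]\le \mathbb{E}\|g\|+\mathbb{E}\|h\|\le \sqrt m+\sqrt n$; and the Borell--Tsirelson--Ibragimov--Sudakov inequality applies since $\sigma_1$ is $1$-Lipschitz in the Frobenius norm. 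The one structural difference from the paper is that the paper does not prove this lemma at all: it is quoted verbatim as Theorem 2.13 of Davidson and Szarek, so there is no internal argument to compare against. What you have written is essentially the standard proof underlying that cited result --- Davidson and Szarek likewise obtain the expectation bound by a Gaussian comparison inequality (they use the Slepian--Gordon form; Sudakov--Fernique, or Gordon's min--max theorem as you note, serve equally well for the one-sided bound) and the deviation bound by concentration of Lipschitz functions of Gaussian vectors. So your write-up buys self-containedness at the cost of invoking two classical tools (comparison and concentration) that the paper black-boxes through the citation; there is no gap.
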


\begin{proof}[Proof of Lemma~\ref{lem:nextNoisyEstimateDeviation_assumingAlphaBetaGetDelta}]
We prove the bound on $u_{t+1}$. 
%
%$\left\|u_{t+1} - u_{t+1}^{(0)}\right\|$. 
The proof for $v_{t+1}$ is similar.
Given %the current estimate
$(u_t, v_t)$, %to compute the next estimate $u_{t+1}$, 
at iteration $t+1$, 
\texttt{R2RILS} calculates three quantities: 
the minimal norm solution $\tilde u_t$ to \eqref{eq:ILS_update_rule_step1}; its normalized version $\bar u_t \equiv \frac{\tilde u_t}{\left\|\tilde u_t\right\|}$; and the next estimate, 
%which is the normalized sum of the previous estimate and $\bar u_t$, 
$u_{t+1} = \frac{u_t + \bar u_t}{\left\|u_t + \bar u_t\right\|}$. The proof is divided into three steps, where we show that the $\ell_2$ error of $\tilde u_t$, $\bar u_t$ and $u_{t+1}$ from their noiseless counterparts, denoted $\tilde u_t^{(0)}$, $\bar u_t^{(0)}$ and $u_{t+1}^{(0)}$ respectively, is bounded by terms linear in $\eta$.

Let us begin with the first step, in which we show that $\left\|\tilde u_t - \tilde u_t^{(0)}\right\| \lesssim \eta$. 
Plugging $X = \sigma uv^\top + Z$ into Eq.~\eqref{eq:minNormSol_generalX}, 
%of Lemma~\ref{lem:minNormSol_generalX}, 
the column space part of the minimal-norm solution 
%to the least squares problem 
of \eqref{eq:ILS_update_rule_step1} is
\begin{align}
\tilde{u}_t &= \beta'_t u + Z v_t - \frac{\alpha_t \beta'_t + u_t^\top Z v_t}{2} u_t 
%\nonumber
% \\
%&= \beta'_t \left(u - \frac{\alpha_t}{2} u_t\right) + \eta \left(I - \frac 12 u_t u_t^\top\right) \bar z_{v,t} \nonumber 
%\\
%&
= \tilde u_t^{(0)} + \eta \left(I - \frac 12 u_t u_t^\top\right) \bar z_{v,t}, \label{eq:utilde_withZ}
\end{align}
where $\beta'_t = \beta_t\sigma$, $\bar z_{v,t} = \bar Z v_t$ and $\tilde u_t^{(0)} = \beta'_t\left(u - \frac{\alpha_t}{2} u_t\right)$ .
%is the solution to the least squares problem in the absence of noise, given the estimate $(u_t, v_t)$
For future use, note that
\begin{align}\label{eq:ut_utilde0}
u_t^\top \tilde u_t^{(0)} = \frac{\alpha_t \beta'_t}{2}
%\end{align}
\quad \mbox{and} \quad
%and
%\begin{align}\label{eq:utilde0_norm}
\left\| \tilde u_t^{(0)} \right\| = \tilde\alpha_t \beta'_t
\geq \frac{\beta'_t}{2}
\end{align}
where $\tilde\alpha_t = \sqrt{1 - \frac{3\alpha_t^2}{4}}$.
%In addition, 
Next, 
let $\zeta_{t,1} \equiv{\bar z_{v,t}}^\top u$ and $\zeta_{t,2} \equiv {\bar z_{v,t}}^\top u_t$. Since $\|u\| = \|u_t\| = \|v_t\| = 1$,
\begin{align}\label{eq:sigma1_bound}
|\zeta_{t,1}|, |\zeta_{t,2}|, \|\bar z_{v,t}\| \leq \sigma_1\left(\bar Z\right).
\end{align}
By Lemma~\ref{lem:Zbar_maximalSV}, with probability  at least $1 - e^{-\frac{n}{2}}$, $\sigma_1\left(\bar Z\right) \leq 2 + \rho \leq 3$. From now on, we assume this event holds.

To bound $\|\tilde u_t - \tilde u_t^{(0)}\|$, it is convenient to decompose $\tilde u_t$, given by Eq.~\eqref{eq:utilde_withZ}, into its component in the direction $\tilde u_t^{(0)}$ and an orthogonal component $\eta \tilde w$ with $\tilde u_t^{(0)} \perp \tilde w$:
\begin{align}\label{eq:utilde_decomposition}
\tilde u_t = (1 + \eta\tilde r) \tilde u_t^{(0)} + \eta \tilde w
\end{align}
where, by Eq.~\eqref{eq:ut_utilde0},  
\begin{align}\label{eq:rtilde}
\tilde r &= {\bar z_{v,t}}^\top \left(I - \tfrac 12 u_t u_t^\top \right) \frac{\tilde u_t^{(0)}}{\left\|\tilde u_t^{(0)}\right\|^2}
%= \frac{\beta'_t}{\tilde\alpha_t^2 {\beta'_t}^2}\left(\zeta_{t,1} - \frac{1}{2}\alpha_t \zeta_{t,2} - \frac{1}{4}\alpha_t \zeta_{t,2}\right) \nonumber \\
= \frac{1}{\tilde\alpha_t^2 \beta'_t} \left(\zeta_{t,1} - \frac 34 \alpha_t \zeta_{t,2}\right)
\end{align}
and
\begin{align*}
\tilde w &= \left( I - \tfrac 12 u_t u_t^\top \right) \bar z_{v,t} - \tilde r \tilde u_t^{(0)} = \bar z_{v,t} - \frac 12 \zeta_{t,2} u_t - \tilde r \tilde u_t^{(0)}.
\end{align*}
For future use, and to conclude the first step of the proof, we now bound $|\tilde r|$ and $\|\tilde w\|$. Denote $D_1 \equiv \frac{1}{\delta\sigma}$. Combining Eq.~\eqref{eq:ut_utilde0}, \eqref{eq:sigma1_bound}, $\sigma_1\left(\bar Z\right)\leq 3$ and the assumption $\alpha_t, \beta_t \geq \delta > 0$ gives 
\begin{align}\label{eq:rtilde_magnitude}
|\tilde r| \leq \frac{1+\frac 34 \alpha_t}{\tilde\alpha_t^2 \beta'_t} \sigma_1\left(\bar Z\right) \leq 21D_1.
\end{align}
Combining Eqs.~\eqref{eq:ut_utilde0}, \eqref{eq:rtilde}, \eqref{eq:sigma1_bound} and $\sigma_1\left(\bar Z\right) \leq 3$ 
and some algebraic manipulations yield
\begin{align}
\left\| \tilde w \right\|^2
%&= \| \bar z_{v,t} \|^2 + \frac 14 \zeta_{t,2}^2 + \tilde\alpha_t^2 {\beta'_t}^2 \tilde r^2 - \zeta_{t,2}^2 - 2 \beta'_t
% \tilde r \left(\zeta_{t,1} - \frac 12 \alpha_t \zeta_{t,2} \right) + \frac 12 \alpha_t \beta'_t \tilde r \zeta_{t,2}
% \nonumber \\
%&= \| \bar z_{v,t} \|^2 + \frac{3}{2} \alpha_t \beta'_t \zeta_{t,2} \tilde r + \left(\tilde\alpha_t^2 \beta'_t \tilde r - 2\zeta_{t,1}\right)\beta'_t \tilde r - \frac 34\zeta_{t,2}^2
&= \| \bar z_{v,t} \|^2 - \frac{(4\zeta_{t,1} - 3\alpha_t\zeta_{t,2})^2}{16\tilde \alpha_t^2} - \frac 34 \zeta_{t,2}^2
\leq \| \bar z_{v,t} \|^2
\leq \sigma_1\left(\bar Z\right)^2
\leq 9. \label{eq:omegatilde_norm}
\end{align}
Since the bound on $\|\tilde w\|$ is independent of $\eta$, \eqref{eq:utilde_decomposition} concludes the first step of the proof.

Next, we bound $\left\|\bar u_t - \bar u_t^{(0)}\right\|$.
By \eqref{eq:utilde_decomposition}, $\left\|\tilde u_t\right\|^2 = (1 + \tilde r\eta)^2 \left\|\tilde u_t^{(0)}\right\|^2 + \eta^2 \|\tilde w\|^2$.
Thus
\begin{align*}
\bar u_t &= \frac{(1 + \tilde r\eta) \tilde u_t^{(0)} + \eta \tilde w}{\sqrt{(1 + \tilde r\eta)^2 \left\|\tilde u_t^{(0)}\right\|^2 + \eta^2 \|\tilde w\|^2}}
%= \frac{1}{\sqrt{1 + \tilde \gamma^2 \tilde\eta^2}}\frac{\tilde u_t^{(0)}}{\left\|\tilde u_t^{(0)}\right\|} + \frac{\tilde\eta \tilde \omega_t}{\sqrt{1 + \tilde \gamma^2 \tilde\eta^2 } \left\|\tilde u_t^{(0)}\right\|} \nonumber \\
= \frac{\bar u_t^{(0)}}{\sqrt{1 + \tilde \gamma^2 \tilde\eta^2}} + \frac{\tilde\eta \tilde \gamma }{\sqrt{1 + \tilde\gamma^2 \tilde\eta^2 } } \cdot 
\frac{\tilde w}{\left\|\tilde w\right\|}
\end{align*}
where $\tilde\eta = \frac{\eta}{\left|1 + \tilde r\eta\right|}$ and $\tilde \gamma = \frac{\|\tilde w\|}{\left\|\tilde u_t^{(0)}\right\|}$.
Combining Eqs. \eqref{eq:omegatilde_norm} and~\eqref{eq:ut_utilde0} and $\beta_t \geq \delta$ gives that $\tilde\gamma \leq 6D_1$.
This, together with the triangle inequality,
%and Eq.~\eqref{eq:omegatilde_norm},
implies that 
%the $\ell_2$ distance between $\bar u_t$ and $\bar u_t^{(0)}$ satisfies
\begin{align}
\left\| \bar u_t - \bar u_t^{(0)} \right\|^2 &= %\left\| \left(1 - \frac{1}{\sqrt{1 + \tilde \gamma^2 \tilde\eta^2}} \right) \bar u_t^{(0)} 
\left\| \tilde a_t \bar u_t^{(0)}
-
\frac{\tilde\eta \tilde \gamma }{\sqrt{1 + \tilde\gamma^2 \tilde\eta^2 } } \cdot 
\frac{\tilde w}{\left\|\tilde w\right\|}
%- \frac{\tilde\eta \tilde \gamma }{\sqrt{1 + \tilde\gamma^2 \tilde\eta^2 }}
%\frac{\tilde \omega_t} \left\|\tilde \omega_t\right\|} 
\right\|^2
%\nonumber \\
%&
\leq \left(\tilde a_t \left\| \bar u_t^{(0)} \right\| + \tilde\eta \tilde \gamma \right)^2 \leq 
	\left(\tilde a_t + 6D_1\tilde\eta\right)^2 \label{eq:ubar_ubar0_distance}
\end{align}
where $\tilde a_t = 1 - \frac{1}{\sqrt{1 + \tilde \gamma^2 \tilde\eta^2}}$.
Next, 
%To conclude this step, 
%To show that $\left\| \bar u_t - \bar u_t^{(0)} \right\|$ is bounded by a term linear in $\eta$, 
we bound %the two quantities 
$\tilde a_t$ and $\tilde\eta$
in Eq.~\eqref{eq:ubar_ubar0_distance}.
By assumption \eqref{eq:lowNoiseLevel_condition}, 
$D_1\eta \leq \frac{1}{75}$.
Combining this with Eq.~\eqref{eq:rtilde_magnitude} yields
$|\tilde r| \eta \leq \frac{1}{3}$, and thus 
\begin{align}\label{eq:etatilde_magnitude}
\tilde\eta \leq \frac{3}{2}\eta.
\end{align}
To bound $\tilde a_t$, we use $\forall x:\, 1-\frac{1}{\sqrt{1+x^2}} \leq \frac{x^2}{2}$, which implies $\tilde a_t \leq \frac 12 \tilde \gamma^2 \tilde\eta^2 \leq \frac 98 \tilde\gamma^2 \eta^2$.
Since $\tilde \gamma < 6 D_1$, 
% so we only need to bound $\tilde\gamma^2$. Combining $\frac{1}{\left\|\tilde u_t^{(0)}\right\|} \leq 2D_1$ and Eq.~\eqref{eq:omegatilde_norm} yields $\tilde\gamma^2 = \frac{\|\tilde w|^2}{\left\|\tilde u_t^{(0)}\right\|^2} \leq 36D_1^2$. Thus
\begin{align}\label{eq:atilde_magnitude}
\tilde a_t \leq 41 D_1^2 \eta^2.
%\quad \mbox{and} \quad
%\tilde a_t^2 \leq \frac 12 D_1^2 \eta^2
\end{align}
Inserting Eqs.~\eqref{eq:etatilde_magnitude} and \eqref{eq:atilde_magnitude} into Eq.~\eqref{eq:ubar_ubar0_distance} and recalling that $D_1\eta \leq \frac{1}{75}$ yields
\begin{align*}
\left\| \bar u_t - \bar u_t^{(0)} \right\|^2
&\leq \left(41D_1^2\eta^2 + 9D_1\eta \right)^2 \leq\left(10D_1\right)^2 \eta^2 = D_2^2 \eta^2,
\end{align*}
where $D_2 \equiv 10D_1$. Since $D_2$ is independent of $\eta$, the second step follows.
%, concluding the second step of the proof.

%Next, we show that $\left\| \left(u_t + \bar u_t\right) - \left(u_t + \bar u_t^{(0)}\right) \right\| \lesssim \eta$.
We now prove the third and final step.
%, which coincides with the required Eq.~\eqref{eq:nextNoisyEstimateDeviation_uvApart}. 
To this end, we decompose $(u_t + \bar u_t)$ into its component in the direction  $u_t + \bar u_t^{(0)}$ and an orthogonal component $\eta \bar w$ with $\left(u_t + \bar u_t^{(0)}\right) \perp \bar w$:
\begin{align}\label{eq:ubar_decomposition}
u_t + \bar u_t = (1 + \bar r \eta) \left(u_t + \bar u_t^{(0)}\right) + \eta \bar w
\end{align}
where $\bar r = \left(\bar w'\right)^\top \frac{u_t + \bar u_t^{(0)}}{\left\| u_t + \bar u_t^{(0)} \right\|^2}$ 
and $\bar w = \bar w' - \bar r \left(u_t + \bar u_t^{(0)}\right)$.
Next, we bound $|\bar r|$ and $\left\| \bar\omega_t\right\|$.
Combining Eq.~\eqref{eq:ut_utilde0} and Lemma~\ref{lem:noisyAlphaBeta_geq_delta} gives that $\left\| u_t + \bar u_t^{(0)} \right\|^2 = \left\| u_t + \frac{u - \frac 12\alpha_t u_t}{\tilde \alpha_t} \right\|^2 = 2 + \frac{2\alpha_t}{\tilde\alpha_t} \geq 2$. Recall that $\|u_t\| = \left\|\bar u_t^{(0)} \right\| = 1$ and $\left\|\bar w'\right\| \leq D_2$. Hence, 
%by the triangle and Cauchy-Schwarz inequalities,
\begin{align}\label{eq:rbar_magnitude}
|\bar r| \leq \frac{\left\| \bar w'\right\| \left(\|u_t\| + \left\|\bar u_t^{(0)}\right\|\right)}{\left\| u_t + \bar u_t^{(0)} \right\|^2} \leq D_2 .
\end{align}
This, in turn, together with the triangle inequality, implies the second bound
\begin{align}\label{eq:omegabar_norm}
\|\bar w\|^2 &\leq \left(\left\|\bar w'\right\| + |\bar r| \left\|u_t + \bar u_t^{(0)}\right\|\right)^2 
%\nonumber \\
%&
\leq \left(\left\|\bar w'\right\| + |\bar r| \left(\|u_t\| + \left\|\bar u_t^{(0)}\right\|\right)\right)^2 \leq 9D_2^2.
\end{align}
Following \eqref{eq:ubar_decomposition}, $\| u_t + \bar u_t\|^2 = \left(1 + \bar r \eta\right)^2 \left\| u_t + \bar u_t^{(0)} \right\|^2 + \eta^2 \|\bar w\|^2$,
so the next estimate is
\begin{align*}
u_{t+1} &= \frac{(1 + \bar r \eta) \left(u_t + \bar u_t^{(0)}\right) + \eta \bar w}{\sqrt{\left(1 + \bar r \eta\right)^2 \left\| u_t + \bar u_t^{(0)}\right\|^2 + \eta^2 \|\bar w\|^2}}
= \frac{u_{t+1}^{(0)}}{\sqrt{1 + \bar\gamma^2 \bar\eta^2}} + \frac{\bar\eta \bar\gamma}{\sqrt{1 + \bar\gamma^2 \bar\eta^2}}\cdot \frac{\bar w}{\left\|\bar w\right\|}
\end{align*}
where $\bar \eta = \frac{\eta}{|1 + \bar r\eta|}$ and 
$\bar\gamma = \frac{\left\|\bar w\right\|}{\left\|u_t + \bar u_t^{(0)}\right\|}$.
Combining $\frac{1}{\left\|u_t + \bar u_t^{(0)}\right\|} \leq \frac{1}{\sqrt 2}$ and Eq.~\eqref{eq:omegabar_norm} gives that $\bar\gamma^2 \leq 5D_2^2$.
This, together with the triangle inequality,
%and Eq.~\eqref{eq:omegabar_norm},
implies that 
%the $\ell_2$ distance between the next updates in the presence and absence of noise satisfies
%estimate in the presence of noise $u_{t+1}$ and its noiseless counterpart $u_{t+1}^{(0)}$ satisfies
\begin{align}
\left\| u_{t+1} - u_{t+1}^{(0)} \right\|^2 &= \left\| 
%\left(1 - \frac{1}{\sqrt{1 + \bar \gamma^2 \ba\eta^2}} \right) 
\bar a u_{t+1}^{(0)} - \frac{\bar\eta \bar\gamma}{\sqrt{1 + \bar\gamma^2 \bar\eta^2}}\cdot \frac{\bar w}{\left\|\bar w\right\|} \right\|^2 \leq \left(\bar a \left\| u_{t+1}^{(0)} \right\| + \bar\eta \bar\gamma \right)^2 \leq \left(\bar a + \sqrt 5D_2\bar\eta\right)^2 \label{eq:utp1_utp10_distance}
\end{align}
where $\bar a = 1 - \frac{1}{\sqrt{1 + \bar \gamma^2 \bar\eta^2}}$.
To show that $\left\| \bar u_{t+1} - u_{t+1}^{(0)} \right\|$ is bounded by a term linear in $\eta$, we bound the two quantities 
$\bar a$ and $\bar\eta$
in Eq.~\eqref{eq:utp1_utp10_distance}.
Recall that $D_2 = 10D_1$.
Since $D_1 \eta \leq \frac{1}{75}$, we have $D_2\eta \leq \frac{1}{7}$. Combining it with Eq.~\eqref{eq:rbar_magnitude} yields $|\bar r|\eta \leq \frac{1}{7}$, which implies the first bound
\begin{align}\label{eq:etabar_magnitude}
\bar\eta \leq \tfrac{7}{6}\eta.
\end{align}
To bound $\bar a$, we again use $\forall x:\, 1-\frac{1}{\sqrt{1+x^2}} \leq \frac{x^2}{2}$ which implies $\bar a \leq \frac 12 \bar \gamma^2 \bar\eta^2 \leq \frac 45 \bar\gamma^2 \eta^2$. Since $\gamma^2 \leq 5D_2^2$, we obtain
$\bar a \leq 4 D_2^2\eta^2 .$
%\begin{align}\label{eq:abar_magnitude}
%\bar a \leq 4 D_2^2\eta^2 .
%\quad \mbox{and} \quad
%\bar a^2\leq \frac 12 D_2^2\eta^2
%\end{align}
%where in the second inequality we used $D_2\eta \leq \frac{1}{10}$.
Combining this bound on $\bar a$ with Eqs.~\eqref{eq:etabar_magnitude}, \eqref{eq:utp1_utp10_distance} and recalling that $D_2\eta \leq \frac{1}{7}$ and $D_2 = 10D_1$ yields
\begin{align*}
\left\| u_{t+1} - u_{t+1}^{(0)} \right\|^2
&\leq \left(4D_2^2\eta^2 + \tfrac{7\sqrt 5}{6}D_2\eta\right)^2 \leq \left(3.2\cdot 10D_1\eta\right)^2
\leq \left(\frac{50}{\sqrt 2\delta} \frac{\eta}{\sigma}\right)^2.
\end{align*}
%This proves Eq.~\eqref{eq:nextNoisyEstimateDeviation_uvApart}.
\end{proof}

%%%%%%%%%%%%%%%%%%%%%%%%%%

\section{Proof of Auxiliary Lemma \ref{lem:minNormSol_generalX}}\label{sec:appendix_proofOfMinNormSolLemma}

%To prove Lemma \ref{lem:minNormSol_generalX}, we first introduce some notation. 
For simplicity, we denote $\bar{u}=u_t$, $\bar{v}=v_t$ and $\Vectorize(X) = \Vectorize_{[m]\times [n]}(X)$ for the vector with entries $X_{i,j}$ for $(i,j) \in [m]\times [n]$. 
Let $\tilde{A}$ be the matrix corresponding to the linear operator
\begin{equation*}
\tilde{A}
\begin{pmatrix} a\\ b \end{pmatrix} = \Vectorize \left( \bar{u} b^{\top} + a\bar{v}^{\top}\right)
\end{equation*}
Note that in the rank-1 case, the least squares problem of Eq.~\eqref{eq:ILS_update_rule_step1} can be rewritten as
\begin{align}\label{eq:leastSquares_usingAtilde}
\text{argmin}_{a\in \mathbb{R}^m, b\in \mathbb{R}^n} \|\tilde{A} \begin{pmatrix} a \\ b \end{pmatrix} - \text{Vec}(X)\|_F.
\end{align}
Next, denote $f_n(i) := \lceil \frac{i}{n} \rceil$ and $g_n(i) := i \text{ mod } n $. 
Then, $\left[\Vectorize(B)\right]_i = B_{f_n(i),g_n(i)}$. Hence
\begin{align*}
\tilde{A}_{ij} &= \begin{cases} \bar{v}_{g_n(i)} \delta_{f_n(i),j}, & j \leq m, \\
\bar{u}_{f_n(i)} \delta_{g_n(i),j-m}, & j > m. \end{cases} 
\end{align*}
Finally, the pseudo-inverse of the matrix $\tilde A$ in (\ref{eq:leastSquares_usingAtilde}) is given by the following lemma:
%Finally, the solution to the least squares problem (\ref{eq:leastSquares_usingAtilde}) depends on the Moore-Penrose pseudo-inverse 
%of $\tilde A$, whose form is given in the following lemma: 

\begin{lemma}\label{lem:pseudoinverse}
The Moore-Penrose pseudoinverse of $\tilde{A}$, $\tilde{A}^\dagger \in \mathbb{R}^{(m+n) \times (m\cdot n)}$, is given by
\begin{align}
\tilde{A}^\dagger_{ij} &= \begin{cases} \frac{\bar{v}_{g_n(j)}}{\|\bar{v}\|^2} \left(\delta_{i, f_n(j)} - \frac 1N \bar{u}_i \bar{u}_{f_n(j)} \right), & i \leq m, \\
\frac{\bar{u}_{f_n(j)}}{\|\bar{u}\|^2} \left(\delta_{i-m, g_n(j)} - \frac 1N \bar{v}_{i-m} \bar{v}_{g_n(j)} \right), & i > m \end{cases}
	\label{eq:A_dagger}
\end{align}
where $N = \|\bar{u}\|^2 + \|\bar{v}\|^2$.
\end{lemma}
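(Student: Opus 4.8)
The plan is to show that the matrix $M$ with entries \eqref{eq:A_dagger} is the Moore--Penrose pseudoinverse of $\tilde A$ by verifying that for every right-hand side $y=\Vectorize(X)$, the vector $My$ is the minimal-norm least-squares solution of $\min_w\|\tilde A w - y\|$. Since the pseudoinverse is the unique linear map enjoying this property for all $y$, agreement on all of $\mathbb{R}^{m\cdot n}$ identifies $M=\tilde A^\dagger$, and as a byproduct it yields the explicit expression of Lemma~\ref{lem:minNormSol_generalX}. I will use the standard characterization that a vector $w$ is the minimal-norm least-squares solution if and only if it solves the normal equations $\tilde A^\top \tilde A\, w = \tilde A^\top y$ and is orthogonal to $\ker(\tilde A)$.

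First I would assemble the two ingredients of the normal equations in closed form from the explicit entries of $\tilde A$, together with the index maps $f_n,g_n$ and the identity $[\Vectorize(B)]_i=B_{f_n(i),g_n(i)}$. Collapsing the Kronecker deltas, the sums over the vectorized index $i$ (equivalently over matrix positions $(f_n(i),g_n(i))\in[m]\times[n]$) reduce to row and column sums, giving
\[
\tilde A^\top y = \begin{pmatrix} X\bar v \\ X^\top \bar u \end{pmatrix}, \qquad
\tilde A^\top \tilde A = \begin{pmatrix} \|\bar v\|^2 I_m & \bar u \bar v^\top \\ \bar v \bar u^\top & \|\bar u\|^2 I_n \end{pmatrix}.
\]
Applying the claimed $M$ to $y$ and collapsing the deltas in the same way produces the candidate solution $(\tilde a,\tilde b)$ with $\tilde a=\frac{1}{\|\bar v\|^2}\!\left(X\bar v - \frac{c}{N}\bar u\right)$ and $\tilde b=\frac{1}{\|\bar u\|^2}\!\left(X^\top \bar u - \frac{c}{N}\bar v\right)$, where $c=\bar u^\top X\bar v$ and $N=\|\bar u\|^2+\|\bar v\|^2$; this is exactly the formula of Lemma~\ref{lem:minNormSol_generalX}.

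Next I would verify the normal equations $(\tilde A^\top \tilde A)(\tilde a;\tilde b)=\tilde A^\top y$ by direct block multiplication. The only nontrivial point is the cross term: one computes $\bar v^\top \tilde b=\frac{c}{N}$ using $N-\|\bar v\|^2=\|\bar u\|^2$, after which the spurious $\bar u$-multiple cancels and the $a$-block collapses to $X\bar v$; the $b$-block is symmetric. For the minimal-norm condition, Lemma~\ref{lemma:rank_deficiency} gives that in the rank-$1$, fully observed case $\ker(\tilde A)=\Span\{(\bar u,-\bar v)\}$, so it remains to check $\bar u^\top \tilde a-\bar v^\top \tilde b=0$; both inner products equal $c/N$ by the same cancellation, so orthogonality holds. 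This establishes that $My$ is the minimal-norm least-squares solution for arbitrary $X$, hence $M=\tilde A^\dagger$.

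The argument carries no real conceptual difficulty; the main obstacle is organizational, namely bookkeeping the row-major vectorization through the maps $f_n,g_n$ so that the double sums over $[m]\times[n]$ collapse correctly to the matrix-vector products above, and keeping the $a$- and $b$-blocks straight. The single algebraic fact that drives every cancellation is $N=\|\bar u\|^2+\|\bar v\|^2$, which forces $\bar u^\top \tilde a=\bar v^\top \tilde b=c/N$; the normalization $1/N$ appearing in \eqref{eq:A_dagger} is chosen precisely so that both the normal equations and the orthogonality to $(\bar u,-\bar v)$ hold simultaneously.
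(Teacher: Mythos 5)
Your proof is correct, but it takes a genuinely different route from the paper's. The paper verifies the four Penrose axioms directly: it computes the entries of $\tilde{A}\tilde{A}^\dagger$ and $\tilde{A}^\dagger\tilde{A}$ explicitly (as $(mn)\times(mn)$ and $(m+n)\times(m+n)$ matrices, respectively), observes both are symmetric, and then checks $\tilde{A}\tilde{A}^\dagger\tilde{A}=\tilde{A}$ and $\tilde{A}^\dagger\tilde{A}\tilde{A}^\dagger=\tilde{A}^\dagger$. You instead use the variational characterization of the pseudoinverse: $M=\tilde A^\dagger$ iff $My$ is the minimal-norm least-squares solution for every $y$, which you verify via the normal equations $\tilde A^\top\tilde A\,w=\tilde A^\top y$ together with orthogonality to $\ker(\tilde A)=\Span\{(\bar u,-\bar v)\}$, the latter imported from (the proof of) Lemma~\ref{lemma:rank_deficiency}. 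Your block computations check out: $\tilde A^\top\tilde A$ has diagonal blocks $\|\bar v\|^2 I_m$, $\|\bar u\|^2 I_n$ and off-diagonal blocks $\bar u\bar v^\top$, $\bar v\bar u^\top$; the cross terms $\bar u^\top\tilde a=\bar v^\top\tilde b=c/N$ make both the normal equations and the kernel orthogonality hold, exactly as you say. What each approach buys: the paper's argument is self-contained (it never needs the kernel identification or the minimal-norm characterization) but requires the heavier entrywise delta bookkeeping for the product matrices; yours works entirely with $(m+n)$-dimensional blocks, and it delivers the closed-form solution \eqref{eq:minNormSol_generalX} of Lemma~\ref{lem:minNormSol_generalX} as an immediate byproduct, whereas the paper obtains that formula in a separate step by applying \eqref{eq:A_dagger} to $\Vectorize(X)$ -- arguably your route proves exactly what is used downstream and nothing more. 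One presentational caution: Lemma~\ref{lemma:rank_deficiency} as stated gives the kernel dimension, and its proof gives the containment $\ker(\tilde A)\subseteq\Span\{(\bar u,-\bar v)\}$; that containment alone already suffices for your orthogonality check, so you may wish to cite it that way rather than asserting equality of the spaces.
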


\begin{proof}
We need to show that (i) $\tilde{A}\tilde{A}^\dagger\tilde{A} = \tilde{A}$, (ii) $\tilde{A}^\dagger\tilde{A}\tilde{A} ^\dagger= \tilde{A}^\dagger$, and that (iii) $\tilde{A}\tilde{A}^\dagger$ and (iv) $\tilde{A}^\dagger\tilde{A}$ are Hermitian. 
By relatively simple calculations, %it follows that  
%Following an explicit calculation, and using the fact that $\frac{1}{N}\left(\frac{1}{\|\bar{v}\|^2} + \frac{1}{\|\bar{u}\|^2}\right) = \frac{1}{\|\bar{v}\|^2\cdot \|\bar{u}\|^2}$, 
the entries of $\tilde{A}\tilde{A}^\dagger \in \mathbb{R}^{(mn)\times (mn)}$ are
\begin{align*}
\left(\tilde{A}\tilde{A}^\dagger\right)_{ij} =
\delta_{f_n(i),f_n(j)}\delta_{g_n(i),g_n(j)} - \left(\delta_{f_n(i),f_n(j)} - \frac{\bar{u}_{f_n(i)} \bar{u}_{f_n(j)}}{||\bar u||^2}\right) \left(\delta_{g_n(i),g_n(j)} - \frac{\bar{v}_{g_n(i)} \bar{v}_{g_n(j)}}{||\bar v||^2}\right).
\end{align*}
Similar calculations for $\tilde{A}^\dagger \tilde{A} \in \mathbb{R}^{(m+n)\times(m+n)}$ give 
%Similarly, using $\{g_n(k) \,|\, f_n(k) = i,\, k\in [mn]\} = [n]$, $\{f_n(k) \,|\, g_n(k) = i,\, k\in [mn]\} = [m]$, and $\frac{1}{\|\bar{v}\|^2} \left(1 - \frac{\|\bar{u}\|^2}{N}\right) = \frac{1}{\|\bar{u}\|^2} \left(1 - \frac{\|\bar{v}\|^2}{N}\right) = \frac{1}{N}$, it follows that
\begin{align*}
\left(\tilde{A}^\dagger\tilde{A}\right)_{ij} = \begin{cases}
\delta_{i,j} - \frac{1}{N} \bar{u}_i\bar{u}_j,  & i\leq m \text{ and } j\leq m, \\
\frac 1N \bar{u}_i \bar{v}_{j-m}, & i\leq m \text{ and } j>m, \\
\frac 1N \bar{v}_{i-m} \bar{u}_j, & i>m \text{ and } j\leq m, \\
\delta_{i,j} - \frac{1}{N} \bar{v}_{i-m}\bar{v}_{j-m}, & i>m \text{ and } j>m.
\end{cases}
\end{align*}
%These two matrices are Hermitian, and it is easy to verify that 
%$\tilde A\tilde A^\dagger \tilde A= \tilde A$ and $\tilde A^\dagger \tilde A \tilde A^\dagger = \tilde A^\dagger$. 
Since these two matrices are Hermitian, conditions (iii)-(iv) are fulfilled. 
It is now simple to verify that 
$\tilde A\tilde A^\dagger \tilde A= \tilde A$ and $\tilde A^\dagger \tilde A \tilde A^\dagger = \tilde A^\dagger$. 
%
%Using these results, explicit calculations show that $\left(\tilde{A}\tilde{A}^\dagger\tilde{A}\right)_{ij} = \sum_{k=1}^{m+n} \left(\tilde{A}\tilde{A}^\dagger\right)_{ik} \left(\tilde{A}\right)_{kj} = \tilde{A}_{ij}$ and $\left(\tilde{A}^\dagger\tilde{A}\tilde{A}^\dagger\right)_{ij} =  \sum_{k=1}^{m+n} \left(\tilde{A}^\dagger\right)_{ik} \left(\tilde{A}\tilde{A}^\dagger\right)_{jk} = \tilde{A}^\dagger_{ij}$, where the last equality uses the fact that $\left(\frac{1}
%{\|\bar{u}\|^2} - \frac{1}{\|\bar{v}\|^2}\right) + \frac{1}{N}\left( \frac{\|\bar{u}\|^2}{\|\bar{v}\|^2} - \frac{\|\bar{v}\|^2}{\|\bar{u}\|^2} \right) = 0$. 
Thus conditions (i)-(ii) are also fulfilled.
\end{proof}

\begin{proof}[Proof of Lemma \ref{lem:minNormSol_generalX}]
Since problem \eqref{eq:ILS_update_rule_step1} is equivalent to \eqref{eq:leastSquares_usingAtilde}, its 
minimal norm solution is $\begin{pmatrix} \tilde{u} \\ \tilde{v} \end{pmatrix} = \tilde A^\dagger \mbox{Vec}(X)$,
with $\tilde A^\dagger$ given by \eqref{eq:A_dagger}. 
An explicit calculation yields \eqref{eq:minNormSol_generalX}. Eq.~\eqref{eq:minNormSol_rank1X} follows by plugging $X = \sigma uv^\top$.
\end{proof}

%%%%%%%%%%%%%%%%%%

\section*{Acknowledgments}
We would like to especially thank Yuval Kluger for pointing us at the direction of matrix completion and for inspiring conversations along the way. We thank Laura Balzano, Nicolas Boumal, Rachel Ward, Rong Ge, Eric Chi, Chen Greif and Haim Avron for interesting discussions.
BN is incumbent of the 
William Petschek professorial chair of mathematics.
BN was supported by NIH grant R01GM135928 and by Pazy  foundation grant ID77-2018. 
 Part of this work was done while BN was on sabbatical at the Institute for Advanced Study at Princeton. He gratefully acknowledges
 the support from the Charles Simonyi Endowment.  

%%%%%%%%%%%%%%%%%%%%%%%%%%%%%%%%%%%%%%%%%%%%%%%%%%%%%%

\bibliography{Arxiv_R2RILS}
\bibliographystyle{plainnat}

\end{document}